\newcolumntype{x}[1]{!{\centering\arraybackslash\vrule width #1}}
\definecolor{darkblue}{RGB}{0,0,160}
\newcolumntype{E}{>{\footnotesize \selectfont}l<{}} 
\newcolumntype{F}{>{\small\selectfont}l<{}}
\newcommand{\N}{\mathbb{N}}
\newcommand{\R}{\mathbb{R}}
\newcommand{\V}{\mathcal{V}}
\DeclareMathOperator{\pnt}{\raise 0.5mm \hbox{\large\bf.}}
\newtheoremstyle{thm}{}{}
     {\em}
     {}
     {\bf}
     {.}
     {0.5em}
     {\thmname{#1}\thmnumber{ #2}\thmnote{ #3}}
\newtheoremstyle{def}{}{}
     {\rm}
     {}
     {\bf}
     {.}
     {0.5em}
     {\thmname{#1}\thmnumber{ #2}\thmnote{ #3}}
\theoremstyle{thm}
\newtheorem{thm}{Theorem}[section]
\newtheorem{lem}[thm]{Lemma}
\newtheorem{cor}[thm]{Corollary}
\newtheorem{conj}[thm]{Conjecture}
\theoremstyle{def}
\newtheorem{defi}[thm]{Definition}
\newtheorem{rem}[thm]{Remark}
\newtheorem{exa}[thm]{Example}
\newtheorem{Not}[thm]{Notation}
\newtheorem{Req}[thm]{Requirement}
\let\epsilon=\varepsilon
\let\phi=\varphi
\let\kappa=\varkappa
\author{Mandala von Westenholz}
\address{Osnabr\"uck University\\ Osnabr\"uck, Germany}
\email{mvonwestenho@uos.de}
\title[volume power functional]{
Covariance matrices of volume power functionals of random simplicial complexes -- an asymptotic analysis}
\subjclass[2010]{Primary: 15B52, 60D05; Secondary: 05C80, 60G55}
\keywords{covariance matrix, volume power functional, Poisson point process}
\begin{document}
\begin{abstract} 
This work analyzes and compares the asymptotic properties of the covariance matrices of vectors of volume power functionals of random Vietoris-Rips complexes, as the intensity of the underlying homogeneous Poisson point process grows.  Several key results are established which, in particular, generalize well-known facts on random graphs. Findings regarding rank, definiteness, determinant, eigenspaces, and related decompositions are presented within three distinct regimes. Moreover, we derive stochastic applications of these algebraic properties, leading to interesting results for vectors of volume power functionals.
\end{abstract}

\maketitle

\section{Introduction}  
Simplicial complexes have gained significant importance in recent research in stochastic geometry and related areas like in the field of topological data analysis (see, e.g., \cite{Carlsson}, \cite{CarlssonZomorodian}, \cite{EdelsbrunnerHarer}, \cite{EdelsbrunnerLetscher},  \cite{Grygiereketal2020}, \cite{Wasserman}). 
In stochastic geometry, the use of simplicial complexes led to significant contributions to limit theorems and homology (see, e.g., \cite{BobrowskiKahle}, \cite{Kahle}, \cite{Linial}, \cite{Meshulam}) and they are a fundamental topic in combinatorics (see, e.g., \cite{
Fomin, Joswig}). 
They also have numerous applications in addressing cross-disciplinary real-world problems, such as high-dimensional pattern mining, network intrusion detection and protein interaction networks (see, for instance, \cite{Atzmueller}, \cite{Atzmueller1}, \cite{Ernesto}).\\
There exist several possibilities to build a simplicial complex out of a point cloud, as the \textit{Vietoris-Rips} (see \cite{AkinwandeReitzner}) and \textit{C\v{e}ch complex }(see \cite{EdelsbrunnerHarer}) construction. In this paper Vietoris-Rips complexes will always be considered in a random setting. A Vietoris-Rips complex is then constructed via a given set of points $X\subseteq \R^d$. Its $k$-simplices are the point sets of cardinality $k+1$, where all points have pairwise distance at most $\delta$ regarding a fixed metric. Here, the considered random points of the simplicial complex originate from a \emph{Poisson point process} $\eta$ (see \cite{LastPenrose}).
In this paper we study a sequence of such Vietoris-Rips complexes $\mathcal{R}(\eta_t, \delta_t)$ with homogeneous Poisson point processes $\eta_t$ with \emph{intensity} $t$ in a convex compact set $W \subseteq \R^d$  for distances $\delta_t \rightarrow 0$ for $t \rightarrow \infty$. \\
These assumptions lead to the following situations: A set $\{x_1, \ldots, x_{k+1}\} $ fulfills for each pair $x_i\neq x_j\in \eta_t$, where $i \neq j \in \{1, \ldots, k+1\}$, that  
$
\| x_i -x_j \| \leq \delta_t,
$  if and only if it is a $k$-simplex in the random simplicial complex $\mathcal{R}(\eta_t, \delta_t)$. In dependence on $t\delta_t^d$ the following regimes arise:

\begin{itemize}
\item \emph{subcritical (sparse) regime}  for $t\delta_t^d \xrightarrow{t \rightarrow \infty} 0$,
\item \emph{critical (thermodynamic) regime}  for $t\delta_t^d \xrightarrow{t \rightarrow \infty} c \in \R_{> 0}$,
\item \emph{supercritical (dense) regime} for $t\delta_t^d \xrightarrow{t \rightarrow \infty}  \infty$.
\end{itemize} 
In the present work, the goal is to examine the volume power functionals \begin{align}
\label{Eq: volume power-functional}
\V_k^{(\alpha)}=\frac{1}{(k+1)!} \sum_{(x_0,\ldots, x_k) \in \eta_{t,\, \neq}^{k+1}} \Delta_{\delta_t}[x_0, \ldots, x_k]^{\alpha},
\end{align} which are \textit{Poisson functionals} (see, e.g., \cite{Peccati, Reitzner, SchneiderWeil, Yoshifusa}) using the following procedure. We investigate the asymptotic behavior of the covariance matrix $\Sigma_n$ of a vector of volume power functionals $(\mathcal{V}^{(\alpha_1)}_{k_1}, \ldots, \mathcal{V}^{(\alpha_n)}_{k_n})$ with respect to the constructed simplicial complexes.  

This article is in some parts based on results of the asymptotic investigation from \cite{AkinwandeReitzner} in which the authors discuss several algebraic properties of such a vector.
Moreover, it is an extension and is motivated by some results from \cite{ReitznerRoemer}, where the authors consider a special case $\mathcal{V}^{(\alpha)}_{1}$ of $\mathcal{V}^{(\alpha)}_{k}$, which is called a \textit{length power functional}. 

Recall, that $f$-vectors of simplicial complexes play an important role in numerous subjects like combinatorics and stochastics (see, e.g., \cite{Barany, Buchta, McMullen}). Since they are a special case of volume power vectors, results on this paper can be applied to such vectors. Details are outlined in the tables in Section \ref{Kap: Main results}. Additionally, an $h$-vector (see, e.g., \cite{McMullen}, \cite{Stanley}), as an equivalent information to a given $f$-vector, is of great interest and can be studied by our approach. This yields a novel contribution to the existing literature in our context. We will discuss this further in Section \ref{Sec: stoch. appl.}.

This paper is structured as follows: In Section \ref{Section: preliminiaries} we introduce the basic notions. To get an overview we display in Section \ref{Kap: Main results} all main results concerning algebraic aspects like rank, definiteness, determinant, eigenspaces and selected decompositions in two tables. Due to the different matrix structures in the regimes, different results occur in each of them. The results are then picked up for applications in Section \ref{Sec: stoch. appl.}. To supplement the studies from Akinwande--Reitzner \cite{AkinwandeReitzner}  we expand the analysis of the covariance matrix of a volume power vector $\Sigma_n$ in Section \ref{Section: volume power functional} as extensive as possible. The last Section \ref{Section: Outlook} gives an overview on open questions and on possible approaches to solve them. \\
\textbf{Acknowledgments.} The author is grateful to Matthias Reitzner and Tim Römer for inspiring discussion on the subject of the paper. The research was partially supported by the Lower Saxony Ministry of Science and Culture (MWK), through the zukunft.niedersachsen program of the Volkswagen Foundation (``Joint-Lab Artificial Intelligence \& Data Science'').

\section{Preliminaries} \label{Section: preliminiaries}
In this section we present a framework for the upcoming investigations. 
Here, a considered simplicial complex $\mathcal{R}(\eta_t, \delta_t)$ with $\delta_t \xrightarrow{t \to \infty}{0} $ arises through a Poisson point process, which is always given through \[\eta_t \colon \Omega  \rightarrow \mathbf{N}(W), \,\omega \mapsto \sum_{i=1}^{N(\omega )} \delta_{X_i(\omega)}.\] The set $W \subseteq \R^d$ is convex and compact and we fix its volume as $V(W)=1$. The $X_i \colon \Omega \rightarrow W$ for $i \in [N(\omega)]$ and $N \colon \Omega \rightarrow \N$ are random variables. Furthermore, we denote by $\mathbf{N}(W)$ the set of finite counting measures on $W$.  
 
Now we introduce the main notations, that we use in the upcoming parts.
\begin{Not} 
\label{Not: PPP} \
\begin{enumerate}
\item Vertices $\{x_i,\ldots,  x_j\} $ are shortened as $ \{x_l\}_{l=i}^j$ and we write $[n]=\{1,\dots,n\}$ for $n\in \N$.
\item  $\eta_t(\omega)$ denotes the counting measure  $\sum_{i=1}^{N(\omega)} \delta_{X_i(\omega)}$ as well as the support of the counting measure, which is the set of points $\{X_1(\omega), \ldots, \,X_{N(\omega)}(\omega)\}$.
\item 
Let $A\in \mathcal{B}(W)$ be a Borel set. 
Then $\eta_t(A)=\eta_t(\omega)(A)=\sum_{i=1}^N \delta_{x_i}(A)$ gives the number of random points in $A$. Furthermore, $\eta_t\cap A=\eta_t(\omega)\cap A=
\{x_1,\ldots, \,x_n\}\cap A$ gives the explicit points of the support of $\eta$ that are lying in $A$.
\item $\eta_{t, \,\neq}^m (\omega)= \{ (x_{1}, \ldots, \,x_{m}) \mid 
j_1, \ldots, \,j_m\in [N(\omega)] 
\text{ and } x_i =X_{j_i}(\omega)  \text{ pairwise distinct} \}$ 
is the set of all $m$-tuples of pairwise distinct points in $\eta_t$. 
\item We abbreviate realizations in the following way: $\eta_t=\eta_t(\omega)$, $\eta_{t, \,\neq}^m (\omega) = \eta_{t, \,\neq}^m$,
$ x_i=X_i(\omega)$, and $N=N(\omega)$.
\item $\Delta_s [x_0, \ldots, x_k]$ is the $k$-dimensional volume of the convex hull of points $x_0, \ldots, x_k$, if all edge lengths are at most $s$ and $0$ otherwise. For $k>d$, we set $\Delta_s [x_0, \ldots, x_k]=1 $.
\item We abbreviate 
\begin{align} \label{Eq: mu k}
    \mu_k^{(\alpha)} := \int_{(B^d ) ^k }\Delta_1[0, \{ x_l\}^k _{l=1}] ^{\alpha} \text{d}x_1 \ldots \text{d}x_k
\end{align}
and 
\begin{align} \label{Eq: mu ki kj}
    \mu_{k_1,k_2:m}^{(\alpha_1,\alpha_2)}:= \int_{(B^d)^{k_1+k_2+1-m}}\Delta_1[0,\{x_l\}^{k_1}_{l=1}]^{\alpha_1}\Delta_1[0,\{x_l\}^{k_1+k_2-m+1}_{l=k_1-m+2}]^{\alpha_2} \text{d}x_1 \ldots \text{d}x_{k_1+k_2+1-m}.
\end{align}
\end{enumerate}  

\end{Not}

In this paper we investigate the \textit{volume power functional} (see \ref{Eq: volume power-functional}), which sums up the $\alpha$-th powers of the $k$-dimensional volume of the $k$-simplices in the Vietoris-Rips complex $\mathcal{R}(\eta_t, \delta_t)$. For the special case $k = 1$ we have $\V_1^{(\alpha)} = L_t^{(\alpha)}$, which is the length power functional, whose covariance matrix was already found in \cite{Reitzner} and algebraically investigated in \cite{ReitznerRoemer}. It sums up the $\alpha$-th powers of the edge lengths in the random geometric graph $\mathcal{G}( \eta_t, \, \delta_t)$ which is the underlying graph of $\mathcal{R}(\eta_t, \delta_t)$.
Furthermore, observe that for the power $\alpha = 0$ the functional $\V_k^{(0)}$ coincides with the number of $k$-simplices in $\mathcal{R}( \eta_t, \, \delta_t)$. In particular, the vector of volume power functionals $(\V_k^{(0)}, \ldots, \V_n^{(0)})$ coincides with the $f$-vector of $\mathcal{R}(\eta_t, \delta_t)$.
See \cite{AkinwandeReitzner, Grygierek} and \cite{ReitznerRoemer} for a stochastic discussion and special cases in the context of the volume power functional.

To prepare the studies in the next sections, we record some important and helpful results. First, we notice, that the expectation of the volume power functional was already determined in \cite[Theorem 3.1]{AkinwandeReitzner}.
\begin{thm}
    Let $\alpha > -d+k-1$ for $k \leq d$ and $\alpha  = 0 $ for $k >d$. The expectation of the volume power functional is given by
    \[\mathbb{E} \mathcal{V}_k^{(\alpha)} = \frac{\mu_k^{(\alpha)}}{(k+1)!}t^{k+1}\delta_t^{k(\alpha+d)} (1+O(\delta_t)).\]
\end{thm}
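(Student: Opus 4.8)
The plan is to use the multivariate Mecke equation to replace the expected sum by a deterministic integral, and then to extract the $\delta_t$-asymptotics of that integral by a scaling substitution. Since $\eta_t$ is homogeneous with intensity $t$ and $V(W)=1$, its intensity measure is $t$ times Lebesgue measure on $W$. Applying the Mecke formula to the symmetric integrand $\Delta_{\delta_t}[x_0,\ldots,x_k]^{\alpha}$ over $m=k+1$ points gives
\[
\E\,\V_k^{(\alpha)}=\frac{t^{k+1}}{(k+1)!}\int_{W^{k+1}}\Delta_{\delta_t}[x_0,\ldots,x_k]^{\alpha}\,\text{d}x_0\cdots\text{d}x_k,
\]
so the whole problem reduces to the asymptotics of this integral as $\delta_t\to 0$.

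Next I would translate and rescale. Fixing $x_0$ and setting $x_i=x_0+\delta_t y_i$ for $i=1,\ldots,k$ yields $\text{d}x_i=\delta_t^{d}\,\text{d}y_i$, turns the edge-length constraint ``all pairwise distances at most $\delta_t$'' into ``all pairwise distances at most $1$'', and rescales the $k$-dimensional volume as $\Delta_{\delta_t}[x_0,x_0+\delta_t y_1,\ldots,x_0+\delta_t y_k]=\delta_t^{k}\,\Delta_1[0,y_1,\ldots,y_k]$. Collecting powers of $\delta_t$ produces exactly the prefactor $\delta_t^{k\alpha}\cdot\delta_t^{kd}=\delta_t^{k(\alpha+d)}$ appearing in the claim.

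It remains to identify the leading constant and bound the boundary effect. The constraint forces each $y_i$ into the unit ball $B^d$, so as soon as $x_0$ has distance more than $\delta_t$ from $\partial W$ the extra requirement $x_0+\delta_t y_i\in W$ is automatic and the inner $y$-integral equals exactly $\mu_k^{(\alpha)}$ from \eqref{Eq: mu k}. Integrating this constant over the interior bulk and using $V(W)=1$ gives the leading term $\mu_k^{(\alpha)}$. For convex compact $W$ the collar of points within distance $\delta_t$ of $\partial W$ has volume $O(\delta_t)$ (by the Steiner-type estimate for the inner parallel body), and on this collar the nonnegative integrand integrates to at most $\mu_k^{(\alpha)}$; hence the boundary contributes only the relative error $O(\delta_t)$. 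Assembling the pieces gives the integral as $\delta_t^{k(\alpha+d)}\mu_k^{(\alpha)}(1+O(\delta_t))$, and multiplying by $t^{k+1}/(k+1)!$ yields the theorem.

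The step needing the most care is the finiteness of $\mu_k^{(\alpha)}$, which is precisely where the hypothesis $\alpha>-d+k-1$ enters for $k\le d$. When $\alpha<0$ the factor $\Delta_1[0,y_1,\ldots,y_k]^{\alpha}$ becomes singular as the configuration degenerates to an affinely dependent one, and one must verify that this singularity is integrable; a local analysis of the order of vanishing of the simplex volume along the degeneracy locus shows that integrability holds exactly for $\alpha>-(d-k+1)$. In the remaining case $k>d$ the volume is replaced by the convention $\Delta_s\equiv 1$ and only $\alpha=0$ is admitted, so $\mu_k^{(0)}$ is merely the volume of the admissible configuration region and the identical Mecke-and-scaling argument applies with no integrability obstacle.
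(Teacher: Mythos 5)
The paper does not prove this statement itself: it is quoted verbatim from \cite[Theorem 3.1]{AkinwandeReitzner}, so there is no internal proof to compare against. Your argument --- the multivariate Mecke equation to rewrite $\E\,\V_k^{(\alpha)}$ as $\frac{t^{k+1}}{(k+1)!}\int_{W^{k+1}}\Delta_{\delta_t}[x_0,\ldots,x_k]^{\alpha}\,\mathrm{d}x_0\cdots \mathrm{d}x_k$, then the substitution $x_i=x_0+\delta_t y_i$ producing the factor $\delta_t^{k(\alpha+d)}$, the identification of the bulk constant $\mu_k^{(\alpha)}$, and the $O(\delta_t)$ collar estimate for convex compact $W$ --- is correct and is the standard route, essentially the same proof as in the cited reference. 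The one step you assert rather than prove is the finiteness of $\mu_k^{(\alpha)}$ for $\alpha>-(d-k+1)$ (integrability of the volume power along the degeneracy locus); this is a known computation and coincides exactly with the hypothesis of the theorem, so it is a citation-level gap rather than a substantive one.
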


The algebraic examinations in this manuscript are based on the covariance matrix from \cite{AkinwandeReitzner} which was found for a modified vector of volume power functionals 
\[(\tilde{\V}_{k_1}^{(\alpha_1)}, \ldots ,\tilde{\V}_{k_n}^{(\alpha_n)}).\]
The normalized volume power functional used in the following sections is defined as
\[\tilde{\V}_{k_i}^{(\alpha_i)}= \frac{\V_{k_i}^{(\alpha_i)}}{Q_i},\] 
with 
\begin{align*} 
    Q_i = t^{\frac{1}{2}}\delta_t^{\alpha_ik_i}\max_{1\leq m \leq k_i+1}\{(t\delta_t^d)^{k_i-\frac{m-1}{2}}\}.
\end{align*}
 Here, the sequence of pairs $(k_i, \alpha_i)$ needs to be an \textit{admissible sequence}. This means, that it satisfies the following conditions. 
\begin{enumerate}
    \item $0 \leq k_1 \leq \ldots \leq k_n$,
    \item the pairs are distinct, 
    \item $\alpha_i> -d+k_i-1$ and $\alpha_i +\alpha_j >-d+\min_{l \in \{i,j\}}k_l-1$ for all $i,j \in \{1, \ldots, n\}$,
    \item $\alpha_i=0$ if $k_i>d$.
\end{enumerate}

The covariance matrix of a functional vector $(\tilde{\V}_{k_1}^{(\alpha_1)}, \ldots ,\tilde{\V}_{k_n}^{(\alpha_n)}) $ of an admissible sequence was explicitly determined by Akinwande and Reitzner \cite{AkinwandeReitzner} for all regimes. Note however, that the critical regime is divided into two further cases.

\begin{thm} [\cite{AkinwandeReitzner}]
\label{Satz: Kov.matrix Laengen Potenz Funktional}
Let $(k_1, \alpha_1), \ldots ,(k_n, \alpha_n)$ be an admissible sequence. The random vector $(\tilde{\V}_{k_1}^{(\alpha_1)}, \ldots, \tilde{\V}_{k_n}^{(\alpha_n)}) $ has the asymptotic covariance matrix 
\begin{align} 
\Sigma_n := \begin{cases}  
A_0^{<1} &\text{ for } \, \lim\limits_{t \rightarrow \infty} t\delta_t^d =0,  \\
\sum_{m=0}^{2k_n}A_m^{<1}c^{m/2}& \text{ for } \,\lim\limits_{t \rightarrow \infty} t\delta_t^d =c \in (0, \, 1],   \\
\sum_{m=0}^{k_n}A_m^{>1}c^{-m} & \text{ for }\,\lim\limits_{t \rightarrow \infty} t\delta_t^d \in [1, \,\infty), \\
A_0^{>1} &\text{ for }  \,\lim\limits_{t \rightarrow \infty} t\delta_t^d =\infty \\
\end{cases} \label{Eq: Kov.matrix Laengen Potenz Funktional} 
\end{align} 
with 
\begin{align} \label{Eq: Am<1}
A_m^{>1} := \left( \mu_{k_l,k_j:m+1}^{(\alpha_l,\alpha_j)} \frac{\mathds{1}(m\leq \min_{i\in \{l,j\}} k_i)}{(m+1)!(k_l-m)!(k_j-m)!}\right)_{l,j=1, \ldots ,n}    
\end{align}
 and 
\begin{align}
\label{Eq: Am>1}
  A_m^{<1}  := \left(\mu_{k_l,k_j: \frac{k_l+k_j-m+2}{2}}^{(\alpha_l,\alpha_j)} \frac{\mathds{1}(m-|k_l-k_j| \in \{0,2,4, \ldots, 2 \min_{i\in \{l,j\}} k_i\})}{(\frac{k_l+k_j-m+2}{2})!(\frac{m-k_l+k_j}{2})!(\frac{m+k_l-k_j}{2})!}\right) _{l,j=1, \ldots, n}.  
\end{align}

\end{thm}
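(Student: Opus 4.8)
The plan is to treat each $\V_{k_i}^{(\alpha_i)}$ as a Poisson $U$-statistic of order $k_i+1$ with symmetric kernel $f_i(x_0,\dots,x_{k_i})=\frac{1}{(k_i+1)!}\Delta_{\delta_t}[x_0,\dots,x_{k_i}]^{\alpha_i}$, and to read off the entries of $\Sigma_n$ from the covariance formula for Poisson $U$-statistics. Writing $\Lambda=t\,\mathrm{d}x$ for the intensity measure of $\eta_t$ on $W$, the Wiener--It\^o chaos expansion gives, for kernels of orders $k_l+1$ and $k_j+1$,
\[
\operatorname{Cov}\!\left(\V_{k_l}^{(\alpha_l)},\V_{k_j}^{(\alpha_j)}\right)=\sum_{r=1}^{\min(k_l+1,\,k_j+1)} r!\binom{k_l+1}{r}\binom{k_j+1}{r}\,\langle f_{l,r},f_{j,r}\rangle_{L^2(\Lambda^r)},
\]
where $f_{i,r}(x_1,\dots,x_r)=\int f_i\,\mathrm{d}\Lambda^{k_i+1-r}$ is obtained by integrating out $k_i+1-r$ arguments. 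The index $r$ is the number of \emph{shared vertices} of the two simplices, so the task reduces to evaluating, for each $r$, the integral of the product of two volume powers that share exactly $r$ points.

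Second, I would evaluate each inner product by the multivariate Mecke equation followed by the substitution $x_i=x_0+\delta_t y_i$ around a fixed base vertex $x_0$. Since $\Delta_{\delta_t}[x_0,\dots,x_{k}]=\delta_t^{k}\Delta_1[0,y_1,\dots,y_{k}]$ and each substituted point contributes a Jacobian $\delta_t^{d}$, integrating the base vertex over $W$ (of volume $1$, up to a boundary error $O(\delta_t)$) gives
\[
\langle f_{l,r},f_{j,r}\rangle_{L^2(\Lambda^r)}=\frac{t^{\,k_l+k_j+2-r}\,\delta_t^{\,\alpha_lk_l+\alpha_jk_j}\,\delta_t^{\,d(k_l+k_j+1-r)}}{(k_l+1)!\,(k_j+1)!}\,\mu_{k_l,k_j:r}^{(\alpha_l,\alpha_j)}\bigl(1+O(\delta_t)\bigr),
\]
because the $k_l+k_j+1-r$ free points range over copies of $B^d$ in the limit and their incidence pattern is exactly the one defining $\mu^{(\alpha_l,\alpha_j)}_{k_l,k_j:r}$ in \eqref{Eq: mu ki kj}. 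Multiplying by $r!\binom{k_l+1}{r}\binom{k_j+1}{r}$ and cancelling the factorials leaves the constant $\frac{1}{r!\,(k_l-r+1)!\,(k_j-r+1)!}$, which is precisely the coefficient appearing in $A^{>1}_{r-1}$ and, after reindexing, in $A^{<1}_m$.

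Third, I would normalise by $Q_lQ_j$ and set $u:=t\delta_t^d$. Writing $\delta_t^d=u/t$, the scaling above becomes $t\,\delta_t^{\alpha_lk_l+\alpha_jk_j}\,u^{\,k_l+k_j+1-r}$, while
\[
Q_lQ_j=t\,\delta_t^{\alpha_lk_l+\alpha_jk_j}\Bigl(\max_{m}u^{\,k_l-(m-1)/2}\Bigr)\Bigl(\max_{m}u^{\,k_j-(m-1)/2}\Bigr),
\]
so each entry reduces to a pure power of $u$. For $u\le 1$ the maxima are attained at $m=k_i+1$, giving denominator $u^{(k_l+k_j)/2}$ and exponent $u^{(k_l+k_j+2-2r)/2}$; relabelling $m=k_l+k_j+2-2r$ turns the shared-vertex count into $r=\tfrac{k_l+k_j-m+2}{2}$ and the exponent into $u^{m/2}$, matching $A_m^{<1}$, with the parity-and-range indicator encoding that $r$ is an admissible integer with $1\le r\le\min(k_l+1,k_j+1)$. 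For $u\ge 1$ the maxima are attained at $m=1$, giving denominator $u^{k_l+k_j}$ and exponent $u^{\,1-r}=u^{-m}$ with $m=r-1$, matching $A_m^{>1}$. Passing to the limit then selects terms: as $u\to 0$ only $m=0$ survives, yielding $\Sigma_n=A_0^{<1}$; as $u\to\infty$ only $m=0$ survives, yielding $\Sigma_n=A_0^{>1}$; and for $u\to c$ the surviving powers assemble into $\sum_m A_m^{<1}c^{m/2}$ (for $c\le1$) or $\sum_m A_m^{>1}c^{-m}$ (for $c\ge1$), the summation ranges $0\le m\le 2k_n$ and $0\le m\le k_n$ following from the bounds on $r$.

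The main obstacle is analytic rather than combinatorial: one must justify that the rescaled integrals converge to the finite constants $\mu^{(\alpha_l,\alpha_j)}_{k_l,k_j:r}$ and that the $O(\delta_t)$ boundary corrections together with the subleading powers of $u$ may be discarded uniformly in the limit. Convergence of the $\mu$-integrals near degenerate, lower-dimensional point configurations is exactly where the admissibility conditions $\alpha_i>-d+k_i-1$ and $\alpha_i+\alpha_j>-d+\min\{k_i,k_j\}-1$ enter, since they ensure that the singularities of $\Delta_1[\,\cdot\,]^{\alpha}$ at flat simplices remain integrable; from these bounds I would extract the domination needed to interchange limit and integration. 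A final consistency check is that the two critical formulas agree at $c=1$, which holds because at $u=1$ both normalisations coincide and the reindexing $m\mapsto r$ identifies the two families of coefficients.
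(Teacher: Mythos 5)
The paper itself offers no proof of this statement---it is imported verbatim from \cite{AkinwandeReitzner}---and your proposal reconstructs essentially the same argument used in that reference: the covariance identity for Poisson $U$-statistics coming from the Wiener--It\^o chaos expansion, the Mecke-plus-rescaling evaluation that produces the constants $\mu^{(\alpha_l,\alpha_j)}_{k_l,k_j:r}$, and the bookkeeping of powers of $u=t\delta_t^d$ against the normalization $Q_lQ_j$ in each regime, with admissibility ensuring integrability at degenerate configurations. Your combinatorial details are also correct: the coefficient $\tfrac{1}{r!\,(k_l+1-r)!\,(k_j+1-r)!}$, the reindexings $m=r-1$ (dense side) and $m=k_l+k_j+2-2r$ (sparse side), and the resulting indicator ranges all match the stated matrices $A_m^{>1}$ and $A_m^{<1}$, including the agreement of the two critical-regime formulas at $c=1$.
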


\section{Main results} \label{Kap: Main results}

The following tables provide an overview of the algebraic questions studied in this work concerning the asymptotic covariance matrices $\Sigma_n$ of a vector of volume power functionals from Theorem \ref{Satz: Kov.matrix Laengen Potenz Funktional} for $n > 1$. They summarize all the results we obtained in each regime and also highlight open problems. These tables show, whether a result has been found in the respective regime for the corresponding vector. $\surd$ indicates that a result (e.g., a formula) has been derived for the considered property. $\times$ signifies that the issue remains an open problem, while $o$ denotes that results have been found for special situations or cases, or that bounds have been established for the exact values, which have yet to be determined. For $n = 1$, all aspects are trivial.

\begin{center} \label{table: first results}
\captionsetup{type=table}
\begin{tabular}[h]{| F x{1pt}c|c|c|E|E|E|}

\toprule    
& \textbf{supercritical regime } & \textbf{subcritical regime } & \textbf{critical regime } \\

\midrule
\textbf{expectation}&$\surd$&$\surd$& $\surd$\\
\midrule
\textbf{rank}&$\surd$&  $\surd$& $\surd$\\
\midrule

\textbf{Jordan}& $\surd$&o&$\times$ \\

\textbf{normal form}&& &\\
\midrule 
 
\textbf{determinant} &  $\surd$& o& $\times$ \\ 
\midrule

\textbf{definiteness} & $\surd$& $\surd$& $\surd$ \\ 
\midrule

 \textbf{inverse matrix}& $\surd$&o& $\times$ \\
\midrule

\textbf{eigenvalues}& $\surd$& o& o\\
\midrule    

\textbf{eigenspaces}& $\surd$ & o&$\times$\\
\bottomrule

\end{tabular}
\caption{\small{First results in all regimes}} \label{Tab: first results in all regimes}
\end{center}

\begin{center} \label{table: decompositions}
\captionsetup{type=table}
\begin{tabular}[h]{| F x{1pt}c|c|c|E|E|E|}

\toprule    

\midrule
& \textbf{supercritical regime } & \textbf{subcritical regime } & \textbf{critical regime } \\
\toprule
\textbf{LU}&$\surd$& o& $\times$\\
\midrule
 
\textbf{Cholesky}& $\surd$&o& $\times$  \\
\midrule
 
\textbf{matrix root}& $\surd$&o&$\times$ \\

\bottomrule

\end{tabular}
\caption{\small{Decompositions in all regimes}} \label{Tab: decompositions in all regimes}
\end{center}

 \section{Applications} \label{Sec: stoch. appl.}
In this section we present use-cases of the algebraic properties from Section \ref{Kap: Main results}. We consider certain scenarios which can be solved by using results from Tables \ref{Tab: first results in all regimes} and \ref{Tab: decompositions in all regimes}.
First, we consider $f$-functionals which count the number of $k$-dimensional simplices in a simplicial complex. Recall, that they are special cases of volume power functionals for $\alpha=0$ and corresponding dimension $k$. Thus, the results from the Tables in Section \ref{Kap: Main results} can be taken over for these special cases. We will use them to derive findings for a vector of $h$-functionals. To be more precise, we consider the truncated functionals
\begin{align} \label{h-functional}
H_k:&= h_k (\mathcal{R}(\eta_t, \delta_t)) = (-1)^k\binom{l}{k}+\sum_{i=1}^k (-1) ^{k-i}\binom{l-i}{k-i} \frac{1}{i!} \sum _{ (y_0, \ldots, y_{i-1}) \in \eta_{t,\neq}^{i}} \mathds{1}_{\leq \delta_t}(y_0, \ldots, y_{i-1})\\
&= (-1)^k\binom{l}{k}+\sum_{i=1}^k (-1) ^{k-i}\binom{l-i}{k-i}f_{i-1}  = (-1)^k\binom{l}{k}+\sum_{i=1}^k (-1) ^{k-i}\binom{l-i}{k-i}\mathcal{V}_{i-1}^{(0)},\nonumber
\end{align}
with $l= \min(d, \dim(\mathcal{R}(\eta_t, \delta_t))+1$ and $k\in \{0, \ldots, l\}$ gives the entries of the $h$-vector of $\mathcal{R}(\eta_t, \delta_t)$. Note, that if $d < \dim(\mathcal{R}(\eta_t, \delta_t))$ we only consider the induced simplicial sub complex $\mathcal{R}_1(\eta_t, \delta_t)$ of $\mathcal{R}(\eta_t, \delta_t)$ which fulfills \[\dim(\mathcal{R}_1(\eta_t, \delta_t)) = d,\] where $d$ is as before the dimension of the surrounding space. 

The authors from \cite{ReitznerRoemer} derived already stochastic applications for length power functionals. We adapt some of their strategies in the following paragraphs. 
We are then able to derive immediately stochastic conclusions about the  volume power functional. The proofs follow along the proofs of \cite[Section 4]{ReitznerRoemer}.

First, we consider the Schur decomposition $\Sigma_n= S D S^{-1}$
which leads to the result
$$
\mathbb{V} (b_1 \mathcal{V}_{k_1}^{(\alpha_1)}+ \dots + b_n \mathcal{V}_{k_n}^{(\alpha_n)} ) 
=
\mathbb{V} ((b^t S) (S^{-1} (\mathcal{V}_{k_i}^{(\alpha_i)})_{i=1, \dots n}))=
(S^T b)  D (b^T S),
$$
where $D$ has the eigenvalues $\lambda_1 \leq \dots \leq \lambda_n$ of $\Sigma_n$ as its diagonal entries and $S \in O(n)$.
We collected results for $\lambda_1 \leq \dots \leq \lambda_n$ in the supercritical regime (see Theorem \ref{Thm: EW.e, ER.e superkrit. Reg.}), as well as under some conditions in the subcritical regime (see Theorems \ref{Thm: eigenvalues subcritical} and \ref{Thm: eigenvalues distinct k_i}). In the critical regime we derived an upper bound for the eigenvalues under certain assumptions (see Conjecture \ref{conj: bounds eigenvalues crit reg}). 
From these findings we obtain the following variance bounds for the volume power functional and in particular for the $f$-functional.

\begin{cor} \label{cor: bounds variance}
Let $\Sigma_n$ be as in Theorem \ref{Satz: Kov.matrix Laengen Potenz Funktional}. Furthermore, let $ b = (b_1, \ldots, b_n) \in \mathbb{R}^n$ and $\underline S_n, \overline S_n$ be bounds for the eigenvalues of $\Sigma_n$. Then
\begin{align} \label{Eq: variance bounds}
   \|b \|^2 \underline S_n \leq \| b \|^2 \lambda_1 \leq
\mathbb{V}  (b_1 \mathcal{V}_{k_1}^{(\alpha_1)}+ \dots + b_n \mathcal{V}_{k_n}^{(\alpha_n)} ) 
\leq \| b \|^2 \lambda_n \leq 
\| b \|^2 \overline S_n.  
\end{align}

\end{cor}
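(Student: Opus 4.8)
The plan is to reduce the statement to the classical Rayleigh-quotient bounds for a symmetric positive semidefinite matrix, so that the corollary becomes a direct consequence of the eigenvalue estimates already collected in the cited theorems. First I would record that, being an asymptotic covariance matrix, $\Sigma_n$ is symmetric; by the spectral theorem its Schur decomposition $\Sigma_n = S D S^{-1}$ is therefore an orthogonal diagonalization, i.e.\ $S \in O(n)$ (so $S^{-1} = S^T$) and $D = \mathrm{diag}(\lambda_1, \ldots, \lambda_n)$ with real eigenvalues ordered as $\lambda_1 \leq \dots \leq \lambda_n$. The second ingredient is the elementary identity already displayed before the statement: writing $\mathbf{V} = (\mathcal{V}_{k_1}^{(\alpha_1)}, \ldots, \mathcal{V}_{k_n}^{(\alpha_n)})$, one has $\mathbb{V}(b^T \mathbf{V}) = b^T \Sigma_n b$, understood in the asymptotic sense in which $\Sigma_n$ is defined.

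Next I would substitute the decomposition and set $c := S^T b$, which gives
\[
\mathbb{V}(b_1 \mathcal{V}_{k_1}^{(\alpha_1)} + \dots + b_n \mathcal{V}_{k_n}^{(\alpha_n)}) = b^T S D S^T b = c^T D c = \sum_{i=1}^n \lambda_i\, c_i^2 .
\]
Since $\lambda_1 \leq \lambda_i \leq \lambda_n$ for every $i$, this sum is squeezed between $\lambda_1 \sum_i c_i^2$ and $\lambda_n \sum_i c_i^2$. Because $S$ (hence $S^T$) is orthogonal, it preserves the Euclidean norm, so $\sum_i c_i^2 = \|S^T b\|^2 = \|b\|^2$. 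This already yields the two inner inequalities $\|b\|^2 \lambda_1 \leq \mathbb{V}(\cdots) \leq \|b\|^2 \lambda_n$ of \eqref{Eq: variance bounds}.

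Finally I would invoke the eigenvalue bounds $\underline S_n \leq \lambda_1$ and $\lambda_n \leq \overline S_n$ supplied by Theorem \ref{Thm: EW.e, ER.e superkrit. Reg.}, Theorems \ref{Thm: eigenvalues subcritical} and \ref{Thm: eigenvalues distinct k_i}, and Conjecture \ref{conj: bounds eigenvalues crit reg} in the respective regimes; multiplying these by the nonnegative scalar $\|b\|^2$ appends the outer inequalities $\|b\|^2 \underline S_n \leq \|b\|^2 \lambda_1$ and $\|b\|^2 \lambda_n \leq \|b\|^2 \overline S_n$, completing the chain. I do not expect a genuine obstacle, since this is a corollary of the spectral theorem together with the already-established eigenvalue bounds; the only points demanding care are justifying that the Schur factor $S$ is orthogonal (which rests on the symmetry of $\Sigma_n$) and fixing the asymptotic reading of the variance identity consistent with the definition of $\Sigma_n$.
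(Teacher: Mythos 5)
Your proposal is correct and follows essentially the same route as the paper: the paper derives the corollary directly from the Schur decomposition $\Sigma_n = SDS^{-1}$ with $S \in O(n)$ and the identity $\mathbb{V}(b_1 \mathcal{V}_{k_1}^{(\alpha_1)} + \dots + b_n \mathcal{V}_{k_n}^{(\alpha_n)}) = (S^T b)^T D (S^T b)$ stated just before the corollary, then squeezes the resulting quadratic form between $\lambda_1 \|b\|^2$ and $\lambda_n \|b\|^2$ and appends the cited eigenvalue bounds. Your write-up merely makes explicit the Rayleigh-quotient details (orthogonality of $S$, norm preservation) that the paper leaves implicit.
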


From the results for the eigenvalues and their bounds mentioned above and from (\ref{Eq: variance bounds}) we now have explicit formulas for the variance. According to these findings we gain some important information on $h$-functionals which are indeed linear combinations of $f$-functionals.

That means, that by choice of $(b_1, \ldots, b_n)$ along (\ref{h-functional}) we obtain bounds for the variance of $h$-functionals. The observations so far lead to the following asymptotic conclusion.
\begin{lem} \label{thm: h-functional}
Let $\lambda_1 \leq \ldots \leq \lambda_n$ be the eigenvalues of $\Sigma_n$ from (\ref{Eq: Kov.matrix Laengen Potenz Funktional}) according to the volume power vector $(\mathcal{V}_{0}^{(0)}, \ldots , \mathcal{V}_{n}^{(0)})$ and $\underline S_n , \overline S_n $ be bounds for the eigenvalues of $\Sigma_n$.
The corresponding $h$-functional $H_k$ according to (\ref{h-functional}) satisfies
\begin{align} \label{Eq: h-functional}
    \sum_{i=1}^k ((-1)^{k-i}\binom{l-i}{k-i})^2 \underline S_n &\leq \sum_{i=1}^k ((-1)^{k-i}\binom{l-i}{k-i})^2 \lambda_1 \\
    &\leq \mathbb{V}  (H_k )\leq  \nonumber\\ 
&\sum_{i=1}^k ((-1)^{k-i}\binom{l-i}{k-i})^2\lambda_n \leq \sum_{i=1}^k ((-1)^{k-i}\binom{l-i}{k-i})^2\overline S_n \nonumber.
\end{align}

\end{lem}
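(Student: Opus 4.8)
The plan is to recognize $H_k$ as an affine-linear functional of the $f$-vector entries collected in the volume power vector $(\mathcal{V}_0^{(0)}, \ldots, \mathcal{V}_n^{(0)})$ and then to apply Corollary \ref{cor: bounds variance} with an appropriately chosen coefficient vector $b$. From the defining identity (\ref{h-functional}) one reads off
\[
H_k = (-1)^k\binom{l}{k} + \sum_{i=1}^k (-1)^{k-i}\binom{l-i}{k-i}\,\mathcal{V}_{i-1}^{(0)},
\]
so that, up to the deterministic constant $(-1)^k\binom{l}{k}$, the functional $H_k$ is a \emph{linear} combination of $\mathcal{V}_0^{(0)}, \ldots, \mathcal{V}_{k-1}^{(0)}$. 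Since adding a constant leaves the variance unchanged, I would first record that $\mathbb{V}(H_k) = \mathbb{V}\!\left(\sum_{i=1}^k (-1)^{k-i}\binom{l-i}{k-i}\,\mathcal{V}_{i-1}^{(0)}\right)$.

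Next I would define the coefficient vector $b$ by letting the entry multiplying $\mathcal{V}_{i-1}^{(0)}$ equal $(-1)^{k-i}\binom{l-i}{k-i}$ for $i \le k$, and setting all remaining components to zero; this zero-pads $b$ so that it matches the length of the vector $(\mathcal{V}_0^{(0)}, \ldots, \mathcal{V}_n^{(0)})$ whose covariance matrix is $\Sigma_n$. Applying the chain of inequalities (\ref{Eq: variance bounds}) from Corollary \ref{cor: bounds variance} with this $b$ then sandwiches $\mathbb{V}(H_k)$ between $\|b\|^2\underline S_n \le \|b\|^2\lambda_1$ on the left and $\|b\|^2\lambda_n \le \|b\|^2\overline S_n$ on the right.

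It remains only to evaluate $\|b\|^2$. Because squaring cancels the alternating signs, $\|b\|^2 = \sum_{i=1}^k \big((-1)^{k-i}\binom{l-i}{k-i}\big)^2$, which is precisely the prefactor appearing on both sides of (\ref{Eq: h-functional}); substituting this value into the bounds from the previous paragraph yields the claimed inequality verbatim.

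I do not expect a genuine obstacle here: the entire content is that $H_k$ is an affine functional of the $f$-functionals and that Corollary \ref{cor: bounds variance} applies to its linear part. The only points requiring care are purely bookkeeping — tracking the index shift $i \mapsto i-1$ between the summation in (\ref{h-functional}) and the components of the volume power vector, observing that the additive constant $(-1)^k\binom{l}{k}$ is irrelevant for the variance, and confirming that the coefficient vector is read off with the same (un-normalized) convention for $\mathcal{V}_{i-1}^{(0)}$ as in the corollary so that the application is literal.
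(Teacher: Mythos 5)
Your proof is correct and follows essentially the same route as the paper: both reduce $H_k$ to its linear part (the additive constant $(-1)^k\binom{l}{k}$ not affecting the variance), apply Corollary \ref{cor: bounds variance} with $b_i = (-1)^{k-i}\binom{l-i}{k-i}$, and evaluate $\|b\|^2 = \sum_{i=1}^k \binom{l-i}{k-i}^2$. The only difference is bookkeeping: you zero-pad $b$ so the bounds involve the eigenvalues of the full matrix $\Sigma_n$, which is if anything more faithful to the statement, whereas the paper's proof applies the corollary to the truncated vector $(\mathcal{V}_{0}^{(0)}, \ldots, \mathcal{V}_{k-1}^{(0)})$ and its covariance matrix $\Sigma_k$.
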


\begin{proof}
To obtain a statement about the $h$-functional $H_k$ from (\ref{h-functional}), choose the $k_1,\ldots, k_n$ from Corollary \ref{cor: bounds variance} as $0, \ldots, k-1$ as well as $\alpha_i = 0$ and $b_i = (-1)^{k-i} \binom{l-i}{k-i}$ for $i \in \{1, \ldots, k\}$ which leads to the norm $\| b \|=\sqrt {\sum_{i=1}^k ((-1)^{k-i}\binom{l-i}{k-i})^2}$. Then we see, that
\begin{align*}
    \|b \|^2 \underline S_n \leq \| b \|^2 \lambda_1 &\leq \mathbb{V}  (b_1 \mathcal{V}_{0}^{(0)}+ \dots + b_k \mathcal{V}_{k-1}^{(0)} )\\
    &=
\mathbb{V}  ((-1)^k\binom{l}{k}+b_1 \mathcal{V}_{0}^{(0)}+ \dots + b_k \mathcal{V}_{k-1}^{(0)} ) = \mathbb{V}  (H_k )   
\leq \| b \|^2 \lambda_n \leq 
\| b \|^2 \overline S_n.
\end{align*}
 Here, the $ \underline S_n, \overline S_n $ are lower and upper bounds for the eigenvalues $\lambda_i$ of the covariance matrix $\Sigma_k$ of $(\mathcal{V}_{0}^{(0)}, \ldots, \mathcal{V}_{k-1}^{(0)})$.

\end{proof}
\begin{thm}
The $h$-functional $H_k$ satisfies the following statements in the situation of Lemma \ref{thm: h-functional}, where $a_i = \frac{{(i-1)}!}{\mu_{i-1}^{(0)}}$ and $a_{i,m} = \sqrt{(m+1)!}(i-1-m)!$.
\begin{enumerate}
    \item In the supercritical regime we have $$
 0 \leq
\mathbb{V} (H_k) 
\leq \| b \|^2 \sum_{i=1}^k \frac{1}{a_i^2}.
$$
\item In the subcritical regime we have $
 \| b \|^2 \mu_{0}^{(0)}\leq
\mathbb{V} (H_k) 
\leq \| b \|^2 \frac{\mu_{k-1}^{(0)}}{ k!}.
$
\item In the critical regime, assuming Requirement \ref{Asum 1}, we have$$
 0 \leq
\mathbb{V} (H_k) 
\leq \| b \|^2(k+1) \cdot \max_{m=0, \ldots, k}\kappa_d^{2k-m}(\sum_{i=1}^{k} \frac{1}{ a_{i,m}^2}).
$$ 
\end{enumerate}
\end{thm}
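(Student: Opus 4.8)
The plan is to read off the three statements directly from Lemma~\ref{thm: h-functional}, which already reduces the variance of $H_k$ to the eigenvalue bounds $\underline S_n,\overline S_n$ of the $f$-vector covariance matrix $\Sigma_k$ of $(\V_0^{(0)},\ldots,\V_{k-1}^{(0)})$ via $\|b\|^2\underline S_n\le \mathbb V(H_k)\le\|b\|^2\overline S_n$. Throughout I specialise the admissible sequence to $k_i=i-1$ and $\alpha_i=0$, so that in each regime the relevant matrix is the corresponding specialisation of $A_0^{>1}$, $A_0^{<1}$, or $\sum_m A_m^{<1}c^{m/2}$ at all powers zero; it then only remains to insert the eigenvalue bounds established earlier for the respective regime.

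For the supercritical regime (claim (i)) I would first note that for $\alpha_l=\alpha_j=0$ and $m=0$ the two simplices in~(\ref{Eq: mu ki kj}) share only the vertex $0$, so the two blocks of integration variables decouple and $\mu_{k_l,k_j:1}^{(0,0)}=\mu_{k_l}^{(0)}\mu_{k_j}^{(0)}$. By~(\ref{Eq: Am<1}) this makes $A_0^{>1}=vv^{T}$ a rank-one positive semidefinite matrix with $v_i=\mu_{i-1}^{(0)}/(i-1)!=1/a_i$, so that $\lambda_1=\dots=\lambda_{n-1}=0$ and $\lambda_n=\|v\|^2=\sum_{i=1}^{k}1/a_i^2$, in accordance with Theorem~\ref{Thm: EW.e, ER.e superkrit. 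Reg.}. Substituting $\underline S_n=0$ and $\overline S_n=\sum_{i=1}^{k}1/a_i^2$ into~(\ref{Eq: h-functional}) gives (i).

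For the subcritical regime (claim (ii)) the structural observation is that $A_0^{<1}$ is diagonal: the indicator in~(\ref{Eq: Am>1}) forces $|k_l-k_j|=0$, and distinctness of the $k_i$ collapses this to $l=j$. The diagonal entry equals $\mu_{k_l,k_l:k_l+1}^{(0,0)}/(k_l+1)!$; since the two simplices then coincide, $\mu_{k_l,k_l:k_l+1}^{(0,0)}=\mu_{l-1}^{(0)}$, so this entry is $\mu_{l-1}^{(0)}/l!$, and the eigenvalues are exactly these numbers. Feeding the extremal values $\underline S_n=\mu_0^{(0)}$ and $\overline S_n=\mu_{k-1}^{(0)}/k!$, as identified in Theorems~\ref{Thm: eigenvalues subcritical} and~\ref{Thm: eigenvalues distinct k_i}, into~(\ref{Eq: h-functional}) yields (ii).

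For the critical regime (claim (iii)) no such clean diagonalisation is available, since $\Sigma_k=\sum_{m=0}^{2k_n}A_m^{<1}c^{m/2}$ mixes all the $A_m^{<1}$. Here I would take the trivial lower bound $\underline S_n=0$ from positive semidefiniteness together with the upper bound $\overline S_n=(k+1)\max_{0\le m\le k}\kappa_d^{2k-m}\!\left(\sum_{i=1}^{k}1/a_{i,m}^2\right)$ supplied, under Requirement~\ref{Asum 1}, by Conjecture~\ref{conj: bounds eigenvalues crit reg}, and insert them into~(\ref{Eq: h-functional}). This last case is the main obstacle: the bound is only conditional on Requirement~\ref{Asum 1}, and its derivation (not reproduced here) must control the simultaneous entrywise growth of all summands $A_m^{<1}$ rather than exploit the rank-one or diagonal structure of the two extreme regimes.
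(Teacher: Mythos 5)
Your proposal is correct and follows essentially the same route as the paper: specialize to the $f$-vector $(\mathcal{V}_0^{(0)},\ldots,\mathcal{V}_{k-1}^{(0)})$, read off the eigenvalue bounds in each regime (Theorem \ref{Thm: EW.e, ER.e superkrit. Reg.} for the supercritical case, Theorem \ref{Thm: eigenvalues distinct k_i} for the subcritical case, and Conjecture \ref{conj: bounds eigenvalues crit reg} with $S_m=\kappa_d^{2k-m}$ under Requirement \ref{Asum 1} for the critical case), and insert them into Lemma \ref{thm: h-functional}. Your re-derivations of the rank-one and diagonal structure of $A_0^{>1}$ and $A_0^{<1}$ merely reprove what the cited theorems already supply, so they do not constitute a different approach.
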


\begin{proof}
 Lemma \ref{thm: h-functional} leads to explicit results for $H_k$ by fixing $n= k-1$. In the supercritical regime we gain insights into the eigenvalues of the $f$-vector $(f_0, \ldots, f_{k-1})=(\mathcal{V}^{(0)}_0, \ldots, \mathcal{V}^{(0)}_{k-1})$ in Theorem \ref{Thm: EW.e, ER.e superkrit. Reg.}, enabling their use into Equation (\ref{Eq: h-functional}). Here we have
    \[ \lambda_1, \ldots, \lambda_{k-1} =0 \text{ and } \,\lambda_k =\sum_{i=1}^k 1/ a_i^2.\]

    This proves (i). \\
By $(f_0, \ldots, f_{k-1})$ we consider a special case of the vector of distinct $k_i$ from Definition \ref{Def: vector distinct ki}. This is why we also have results for the eigenvalues of $\Sigma_k$ in the subcritical regime. This means by Theorem \ref{Thm: eigenvalues distinct k_i} that $\lambda_i = \frac{\mu_{i-1}^{(0)}}{ i !}$ for $i \in \{1, \ldots, k\}$ and this leads directly to the result from (ii). \\
In the critical regime we see that assuming Requirement \ref{Asum 1} and using Conjecture \ref{conj: bounds eigenvalues crit reg} in Lemma \ref{thm: h-functional} we obtain with the simplification $S_m=\kappa_d^{2k-m}$ from (\ref{Eq: bound S}) the findings from (iii).
\end{proof}

Furthermore, we find applications of the inverse matrix, which is called concentration matrix, if $\Sigma_n$ is invertible. The entries of the joint concentration matrix of a volume power vector of length 3 present partial correlations between two functionals in the vector given a third one. See the following theorem for more details.

\begin{thm}
    Let $(\tilde{\mathcal{V}}^{(\alpha_1)}_{k_1},  \tilde{\mathcal{V}}^{(\alpha_2)}_{k_2}, \tilde{\mathcal{V}}^{(\alpha_3)}_{k_3})$ be a normalized vector of volume power functionals with covariance matrix $\Sigma_3= (\sigma_{ij})$ according to (\ref{Eq: Kov.matrix Laengen Potenz Funktional}). Let $\Sigma_3$ be invertible.
    The partial correlation of $\tilde{\mathcal{V}}^{(\alpha_1)}_{k_1}$ and $\tilde{\mathcal{V}}^{(\alpha_2)}_{k_2}$ given $\tilde{\mathcal{V}}^{(\alpha_3)}_{k_3}$ is
    \[\rho_{\tilde{\mathcal{V}}^{(\alpha_1)}_{k_1}, \tilde{\mathcal{V}}^{(\alpha_2)}_{k_2}| \tilde{\mathcal{V}}^{(\alpha_3)}_{k_3}} = -\frac{s_{12}}{\sqrt{s_{11}s_{22}}},\] where $ \Sigma_3^{-1}= (s_{ij})$ is the inverse matrix of $\Sigma_3$.
\end{thm}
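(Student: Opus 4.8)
The plan is to recognize this as a standard fact from the theory of Gaussian graphical models: for a random vector with covariance matrix $\Sigma_3$, the partial correlation between components $i$ and $j$ given the remaining component equals $-s_{ij}/\sqrt{s_{ii}s_{jj}}$, where $(s_{ij}) = \Sigma_3^{-1}$ is the concentration (precision) matrix. Since the statement only concerns the algebraic relationship between entries of $\Sigma_3$ and its inverse, the proof is purely linear-algebraic and does not depend on any stochastic-geometric features of the volume power functionals beyond the fact that $\Sigma_3$ is a genuine covariance matrix (symmetric positive semidefinite) that is assumed invertible here.

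First I would recall the definition of partial correlation. The partial correlation $\rho_{X_1,X_2\mid X_3}$ is the ordinary correlation between the residuals of $X_1$ and $X_2$ after linearly regressing each on $X_3$. Writing $\Sigma_3 = (\sigma_{ij})_{i,j=1,2,3}$, the residual of $X_i$ on $X_3$ is $X_i - (\sigma_{i3}/\sigma_{33})X_3$, and a direct computation gives the residual covariances
\begin{align*}
\operatorname{Cov}(X_1 - \tfrac{\sigma_{13}}{\sigma_{33}}X_3,\; X_2 - \tfrac{\sigma_{23}}{\sigma_{33}}X_3) = \sigma_{12} - \frac{\sigma_{13}\sigma_{23}}{\sigma_{33}},
\end{align*}
and analogously for the residual variances with indices $11$ and $22$. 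Hence
\begin{align*}
\rho_{\tilde{\mathcal{V}}^{(\alpha_1)}_{k_1}, \tilde{\mathcal{V}}^{(\alpha_2)}_{k_2}\mid \tilde{\mathcal{V}}^{(\alpha_3)}_{k_3}} = \frac{\sigma_{12}\sigma_{33}-\sigma_{13}\sigma_{23}}{\sqrt{(\sigma_{11}\sigma_{33}-\sigma_{13}^2)(\sigma_{22}\sigma_{33}-\sigma_{23}^2)}}.
\end{align*}

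The second and decisive step is to rewrite this in terms of the inverse matrix. Using the cofactor (adjugate) formula $s_{ij} = (-1)^{i+j}\, M_{ji}/\det\Sigma_3$, where $M_{ji}$ is the $(j,i)$ minor, I would compute the relevant $2\times 2$ minors explicitly: $s_{12} = -(\sigma_{12}\sigma_{33}-\sigma_{13}\sigma_{23})/\det\Sigma_3$ (using symmetry of $\Sigma_3$), $s_{11} = (\sigma_{22}\sigma_{33}-\sigma_{23}^2)/\det\Sigma_3$, and $s_{22} = (\sigma_{11}\sigma_{33}-\sigma_{13}^2)/\det\Sigma_3$. Substituting these into $-s_{12}/\sqrt{s_{11}s_{22}}$, the common factor $\det\Sigma_3$ cancels between numerator and the square root of the denominator, and the expression collapses exactly to the residual-correlation formula above, completing the proof.

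I do not anticipate a genuine obstacle here, since the result is classical and the argument is a short linear-algebra verification; invertibility of $\Sigma_3$ (assumed in the hypothesis) guarantees $\det\Sigma_3 \neq 0$ and $s_{11},s_{22}>0$ by positive definiteness, so the division and square root are well defined and the sign works out. The only point requiring a little care is the sign bookkeeping in the cofactor expansion—tracking the factor $(-1)^{i+j}$ that produces the leading minus sign in the final formula—and the use of symmetry $\sigma_{ij}=\sigma_{ji}$ to identify the off-diagonal minor with the numerator of the residual covariance. Following the analogous treatment in \cite[Section 4]{ReitznerRoemer} for length power functionals, I would then simply note that the identity holds verbatim for the volume power vector once $\Sigma_3$ is the covariance matrix from (\ref{Eq: Kov.matrix Laengen Potenz Funktional}).
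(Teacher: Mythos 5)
Your proof is correct, and it diverges from the paper's argument at the decisive step. Both proofs open identically: partial correlation is defined as the correlation of the residuals from regressing $\tilde{\mathcal{V}}^{(\alpha_1)}_{k_1}$ and $\tilde{\mathcal{V}}^{(\alpha_2)}_{k_2}$ on $\tilde{\mathcal{V}}^{(\alpha_3)}_{k_3}$, and the residual covariances $\sigma_{ij}-\sigma_{i3}\sigma_{33}^{-1}\sigma_{3j}$ are computed the same way. Where you part ways is in how these are tied to the entries of $\Sigma_3^{-1}$: the paper writes $\Sigma_3$ and $\Sigma_3^{-1}$ in $2{+}1$ block form, invokes the Schur complement and the block-inversion identity (citing \cite{Crabtree}) to identify the matrix of residual covariances with the inverse of the upper-left $2\times 2$ block $S=(s_{ij})_{i,j\in\{1,2\}}$ of $\Sigma_3^{-1}$, and then re-inverts that $2\times 2$ block explicitly to solve for the residual covariances in terms of $s_{11},s_{12},s_{22}$. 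You instead bypass the Schur complement entirely and use the adjugate formula $s_{ij}=(-1)^{i+j}M_{ji}/\det\Sigma_3$, verifying by direct substitution that $-s_{12}/\sqrt{s_{11}s_{22}}$ collapses to the residual-correlation expression after the determinant cancels. Your route is shorter and needs no external block-matrix lemma, at the cost of being a coordinate computation tailored to the $3\times 3$ case; the paper's Schur-complement mechanism is more structural and is the one that extends verbatim to partial correlations conditioned on a block of several functionals. One small point worth making explicit in your write-up: the cancellation of $\det\Sigma_3$ against the square root in the denominator uses $\det\Sigma_3>0$ (not merely $\neq 0$), which holds because an invertible covariance matrix is positive definite --- you gesture at this with the remark on $s_{11},s_{22}>0$, and the same positivity also justifies dividing by $\sigma_{33}$ in the regression step.
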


\begin{proof}
    The partial corelation $\rho_{\tilde{\mathcal{V}}^{(\alpha_1)}_{k_1}, \tilde{\mathcal{V}}^{(\alpha_2)}_{k_2}| \tilde{\mathcal{V}}^{(\alpha_3)}_{k_3}}$ is the correlation between the residuals $L_{1}$ and $L_2$ resulting from the linear regression of $\tilde{\mathcal{V}}^{(\alpha_1)}_{k_1}$ with $\tilde{\mathcal{V}}^{(\alpha_3)}_{k_3}$ and of $\tilde{\mathcal{V}}^{(\alpha_2)}_{k_2}$ with $\tilde{\mathcal{V}}^{(\alpha_3)}_{k_3}$. \\
    Thus, we consider \[x_0= \text{argmin}_x\mathbb{E}\| \tilde{\mathcal{V}}^{(\alpha_1)}_{k_1}- x^t \tilde{\mathcal{V}}^{(\alpha_3)}_{k_3}  \|^2 \text{ and } y_0= \text{argmin}_y\mathbb{E}\| \tilde{\mathcal{V}}^{(\alpha_2)}_{k_2}- y^t \tilde{\mathcal{V}}^{(\alpha_3)}_{k_3}  \|^2.\] 
    We obtain
    \[x_0 = \sigma_{33}^{-1}\sigma_{31} \text{ and } y_0= \sigma_{33}^{-1}\sigma_{32}.\]
    This means that one can write the residuals as 
    \begin{align*}
      L_1 &=\tilde{\mathcal{V}}^{(\alpha_1)}_{k_1}-x_0\tilde{\mathcal{V}}^{(\alpha_3)}_{k_3} =  \tilde{\mathcal{V}}^{(\alpha_1)}_{k_1}-\sigma_{33}^{-1}\sigma_{31}\tilde{\mathcal{V}}^{(\alpha_3)}_{k_3}\\
      &\text{ and } \\
      L_2 &=\tilde{\mathcal{V}}^{(\alpha_2)}_{k_2}-y_0\tilde{\mathcal{V}}^{(\alpha_3)}_{k_3} = \tilde{\mathcal{V}}^{(\alpha_2)}_{k_2}-\sigma_{33}^{-1}\sigma_{32}\tilde{\mathcal{V}}^{(\alpha_3)}_{k_3}.  
    \end{align*}
 The joint covariance of the residuals is then given by
    \begin{align*}
    \mathbb{C}ov (L_1,L_2) &= \\
        &\mathbb{C}ov(\tilde{\mathcal{V}}^{(\alpha_1)}_{k_1}-\sigma_{33}^{-1}\sigma_{31}\tilde{\mathcal{V}}^{(\alpha_3)}_{k_3},\tilde{\mathcal{V}}^{(\alpha_2)}_{k_2}-\sigma_{33}^{-1}\sigma_{32}\tilde{\mathcal{V}}^{(\alpha_3)}_{k_3})\\
&=\mathbb{C}ov(\tilde{\mathcal{V}}^{(\alpha_2)}_{k_2},\tilde{\mathcal{V}}^{(\alpha_1)}_{k_1})- \sigma_{33}^{-1}\sigma_{23}\mathbb{C}ov(\tilde{\mathcal{V}}^{(\alpha_1)}_{k_1},\tilde{\tilde{\mathcal{V}}}^{(\alpha_3)}_{k_3}) - \sigma_{33}^{-1}\sigma_{13}\mathbb{C}ov(\tilde{\mathcal{V}}^{(\alpha_2)}_{k_2},\tilde{\mathcal{V}}^{(\alpha_3)}_{k_3} )\\
&+\sigma_{33}^{-1}\sigma_{33}^{-1}\sigma_{32}\sigma_{31}\mathbb{C}ov(\tilde{\mathcal{V}}^{(\alpha_3)}_{k_3},\tilde{\mathcal{V}}^{(\alpha_3)}_{k_3}) \\
        &= \sigma_{12}-\sigma_{33}^{-1}\sigma_{23}\sigma_{13}-\sigma_{33}^{-1}\sigma_{13}\sigma_{23}+\sigma_{33}^{-1}\sigma_{33}^{-1}\sigma_{32}\sigma_{31}\sigma_{33}=\sigma_{12}-\sigma_{13}\sigma_{33}^{-1}\sigma_{23}.
         \end{align*}

         Analogue computations yield
         \[\mathbb{C}ov(L_1,L_1) = \sigma_{11}-\sigma_{13}\sigma_{33}^{-1}\sigma_{13} \,\text{  and  } \,\mathbb{C}ov(L_2,L_2) = \sigma_{22}-\sigma_{23}\sigma_{33}^{-1}\sigma_{23}.  \]
    Then, we use the Schur complement and the inverse formula for block matrices (see \cite{Crabtree}). Here, the following block form is useful to compute the inverse of $S$.
    \[\Sigma_3^{-1} = \left( \begin{array}{rrr}
s_{11} & s_{12} & s_{13} \\
s_{12} & s_{22} & s_{23} \\
s_{13} & s_{23} & s_{33} \\
\end{array}\right) = \left( \begin{array}{rrr}
S & R^t \\
R & s_{33} \\
\end{array}\right), \text{ with } S=\left( \begin{array}{rrr}
s_{11} & s_{12}  \\
s_{12} & s_{22}  \\
\end{array}\right) \text{ and } R = (s_{13}, s_{23}). \] We use the decomposition of $\Sigma_3$ into
 \[\Sigma_3 = \left( \begin{array}{rrr}
\sigma_{11} & \sigma_{12} & \sigma_{13} \\
\sigma_{12} & \sigma_{22} & \sigma_{23} \\
\sigma_{13} & \sigma_{23} & \sigma_{33} \\
\end{array}\right) = \left( \begin{array}{rrr}
A & B^t \\
B & \sigma_{33} \\
\end{array}\right), \text{ with } A=\left( \begin{array}{rrr}
\sigma_{11} & \sigma_{12}  \\
\sigma_{12} & \sigma_{22}  \\
\end{array}\right) \text{ and } B = (\sigma_{13}, \sigma_{23}).\]
 This leads to
\[S^{-1} = A-B^t\sigma_{33}^{-1}B= (\sigma_{ij})_{i,j\in \{1,2\}}- (\sigma_{31}, \sigma_{32})^t \sigma_{33} (\sigma_{13},\sigma_{23}).\]
    
    We define $S^{-1}:= (s_{ij}^{(-1)})$ and see that for $i, j \in \{1,2\}$ holds
    \begin{align*}
       s_{ij}^{(-1)}= \sigma_{ij}-\sigma_{3i}\sigma_{33}\sigma_{3j} = \mathbb{C}ov(L_i,L_j),
    \end{align*}
    which are exactly the covariances of the residuals above. 
    In the end we use the determinant formula for $2 \times 2$-matrices to compute $S^{-1}$ again and obtain
    \[S^{-1}=\frac{1}{s_{11}s_{22}-s_{12}^2 }\left( \begin{array}{rrr}
s_{22} & -s_{12}  \\
-s_{12} & s_{11}  \\
\end{array}\right).\]
A comparison shows that 
\begin{align*}
    \mathbb{C}ov(L_1,L_1)=  \frac{s_{22}}{s_{11}s_{22}-s_{12}^2 },  \,\mathbb{C}ov(L_2,L_2)=  \frac{s_{11}}{s_{11}s_{22}-s_{12}^2 },\,\mathbb{C}ov(L_1,L_2)=  \frac{-s_{21}}{s_{11}s_{22}-s_{12}^2 }.
\end{align*}

  To finalize the proof, we observe that

\[ \rho_{\tilde{\mathcal{V}}^{(\alpha_1)}_{k_1}, \tilde{\mathcal{V}}^{(\alpha_2)}_{k_2}| \tilde{\mathcal{V}}^{(\alpha_3)}_{k_3}} = \frac{\mathbb{C}ov (L_1,L_2)}{\sqrt{\mathbb{C}ov (L_1,L_1), \mathbb{C}ov (L_2,L_2)}}=  
-\frac{s_{12}}{\sqrt{s_{11}s_{22}}}. \qedhere
\]

\end{proof}

\begin{exa}
   In the subcritical regime we can easily compute the concentration matrix for the vector of distinct $k_i$ (see Definition \ref{Def: vector distinct ki}), as it is a diagonal matrix. For a vector of length $n=3$ we get

    \[\Sigma_3^{-1}= \left( \begin{array}{rrr}
\sigma_{11}^{-1} & 0 & 0 \\
0 & \sigma_{22}^{-1}  & 0 \\
0 & 0 & \sigma_{33}^{-1}  \\
\end{array}\right)\] and therefore we have
$\rho_{1,2|3} = 0.$
\end{exa}

Moreover, we can make further use of the Schur decomposition in the super critical regime which is, in dependence of the $a_i$ from (\ref{Eq: Abbr}) the same procedure as for the special case of the length power functional in \cite{ReitznerRoemer}. This is why we can take over the results from there to state the following relation between volume power functionals. According to \cite[Theorem 4.5.]{AkinwandeReitzner} $(\tilde{\mathcal{V}}_{k_1}^{(\alpha_1)}, \ldots ,\tilde{\mathcal{V}}_{k_n}^{(\alpha_n)})$ is asymptotic normal distributed under the given conditions there.

\begin{thm}\label{thm:stochconv-dense}
For $t\delta_t^d \rightarrow \infty$ let $(k_1,\alpha_1), (k_2,\alpha_2)$ be an admissible sequence with $4\alpha_i >-d+k_i-1$ for $i\in \{1,2\}$. Then 
$$
D_t^{(\alpha_1, \alpha_2)} = a_1 \tilde{ \mathcal{V}}_{k_1}^{(\alpha_1)} - a_2 \tilde{ \mathcal{V}}_{k_2}^{(\alpha_2)}
\stackrel{\mathds P}{\to} 0 \text{ as $t \to \infty$.}$$
\end{thm}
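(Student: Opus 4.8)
The plan is to turn the statement into an $L^{2}$–assertion and to exploit that in the dense regime the limiting covariance matrix $\Sigma_2=A_0^{>1}$ from \eqref{Eq: Am<1} has rank one. Write $v_i:=\mu_{k_i}^{(\alpha_i)}/k_i!$. For $m=1$ the two simplices in the defining integral \eqref{Eq: mu ki kj} meet only in the origin, so that integral factorizes, $\mu_{k_l,k_j:1}^{(\alpha_l,\alpha_j)}=\mu_{k_l}^{(\alpha_l)}\mu_{k_j}^{(\alpha_j)}$, and hence $A_0^{>1}=vv^{T}$ has entries $v_lv_j$, while the normalizing constants of \eqref{Eq: Abbr} are exactly $a_i=v_i^{-1}$. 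The whole argument rests on the observation that, under this scaling, the two normalized functionals acquire asymptotically identical first and second order behaviour.

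First I would treat the fluctuations. By Theorem \ref{Satz: Kov.matrix Laengen Potenz Funktional} the covariances $\sigma_{ij}=\mathbb{C}ov(\tilde{\mathcal V}_{k_i}^{(\alpha_i)},\tilde{\mathcal V}_{k_j}^{(\alpha_j)})$ converge to $(A_0^{>1})_{ij}=v_iv_j$, whence
\[
\mathbb{V}\!\left(D_t^{(\alpha_1,\alpha_2)}\right)=a_1^2\sigma_{11}+a_2^2\sigma_{22}-2a_1a_2\sigma_{12}\;\longrightarrow\;a_1^2v_1^2+a_2^2v_2^2-2a_1a_2v_1v_2=(a_1v_1-a_2v_2)^2=0,
\]
since $a_iv_i=1$. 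Conceptually this is the rank–one collapse: the central limit theorem \cite[Theorem 4.5]{AkinwandeReitzner}, whose hypotheses are precisely the fourth–moment conditions $4\alpha_i>-d+k_i-1$ quoted in the statement, shows that the centred pair converges in distribution to a degenerate Gaussian $(v_1G,v_2G)$ driven by a single $G\sim N(0,1)$. Applying the continuous map $(x,y)\mapsto a_1x-a_2y$ yields the limit $(a_1v_1-a_2v_2)G=0$, and convergence in distribution to the constant $0$ upgrades to convergence in probability. Either route shows that the fluctuation part of $D_t^{(\alpha_1,\alpha_2)}$ disappears.

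It then remains to control the deterministic term $\mathbb E D_t^{(\alpha_1,\alpha_2)}=a_1\mathbb E\tilde{\mathcal V}_{k_1}^{(\alpha_1)}-a_2\mathbb E\tilde{\mathcal V}_{k_2}^{(\alpha_2)}$. Here I would insert the expectation asymptotics of \cite[Theorem 3.1]{AkinwandeReitzner}: in the dense regime the maximum defining $Q_i$ is attained at $m=1$, so $Q_i=t^{k_i+1/2}\delta_t^{k_i(\alpha_i+d)}$, and therefore
\[
a_i\,\mathbb E\tilde{\mathcal V}_{k_i}^{(\alpha_i)}=\frac{k_i!}{\mu_{k_i}^{(\alpha_i)}}\cdot\frac{\mu_{k_i}^{(\alpha_i)}}{(k_i+1)!}\,t^{1/2}\bigl(1+O(\delta_t)\bigr)=\frac{t^{1/2}}{k_i+1}\bigl(1+O(\delta_t)\bigr).
\]
The leading $t^{1/2}$–terms cancel exactly thanks to $a_i=v_i^{-1}$ as soon as $\tfrac{1}{k_1+1}=\tfrac{1}{k_2+1}$, i.e.\ for the common dimension that the notation $D_t^{(\alpha_1,\alpha_2)}$ suggests. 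Combining $\mathbb E D_t^{(\alpha_1,\alpha_2)}\to0$ with $\mathbb V(D_t^{(\alpha_1,\alpha_2)})\to0$ gives $\mathbb E\big[(D_t^{(\alpha_1,\alpha_2)})^{2}\big]\to0$, which is the asserted convergence in probability.

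The step I expect to be the main obstacle is exactly this control of the mean. While the leading $t^{1/2}$ contributions cancel, the subleading boundary corrections hidden in the two $O(\delta_t)$ factors carry different, $\alpha$–dependent constants and leave a remainder of order $t^{1/2}\delta_t$, which need not tend to $0$ under the sole hypothesis $t\delta_t^d\to\infty$. Making the argument rigorous therefore requires either sharpening the expectation expansion so that these boundary terms are seen to be of lower order (or to cancel), or passing to a self–normalized comparison of $D_t^{(\alpha_1,\alpha_2)}$ against the common mean growth $t^{1/2}/(k+1)$; the fourth–moment assumption $4\alpha_i>-d+k_i-1$ is what supplies enough integrability to run the underlying concentration estimates uniformly.
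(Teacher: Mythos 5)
Your treatment of the fluctuations is exactly the paper's argument. The paper's entire proof is the one-sentence citation to \cite[Theorem 4.2]{ReitznerRoemer}, and the reasoning behind that result is precisely what you wrote: the factorization $\mu_{k_1,k_2:1}^{(\alpha_1,\alpha_2)}=\mu_{k_1}^{(\alpha_1)}\mu_{k_2}^{(\alpha_2)}$ makes $\Sigma_2$ the rank-one matrix $vv^{T}$ with $v_i=1/a_i$, so that
\[
\mathbb{V}\bigl(D_t^{(\alpha_1,\alpha_2)}\bigr)\longrightarrow a_1^2v_1^2+a_2^2v_2^2-2a_1a_2v_1v_2=(a_1v_1-a_2v_2)^2=0,
\]
and the multivariate CLT of \cite[Theorem 4.5]{AkinwandeReitzner} turns the degenerate Gaussian limit into convergence in probability (in fact, once one works with centered functionals, Chebyshev's inequality alone suffices; the CLT is only needed for the quantitative rate (\ref{eq:order-Dt})).

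The obstacle you flag about the means is real, but it is a defect of the theorem's formulation, not a missing step of the intended proof. Everything in this section tacitly concerns the \emph{centered} normalized functionals: the CLT being invoked is a statement about centered functionals, and the later assertion $Y=\lim_{t\to\infty}(\tilde{\mathcal{V}}_{k_1}^{(\alpha_1)},\tilde{\mathcal{V}}_{k_2}^{(\alpha_2)})\sim\mathcal{N}(0,\Sigma_2)$ is meaningless otherwise. Under that reading your first paragraph is already a complete proof, and the mean term never enters. Under the literal (uncentered) definition of $\tilde{\mathcal{V}}$ from Section \ref{Section: preliminiaries}, your computation $a_i\,\mathbb{E}\tilde{\mathcal{V}}_{k_i}^{(\alpha_i)}=\frac{t^{1/2}}{k_i+1}(1+O(\delta_t))$ in fact shows the statement is \emph{false} whenever $k_1\neq k_2$: the mean of $D_t^{(\alpha_1,\alpha_2)}$ diverges like $t^{1/2}\bigl|\tfrac{1}{k_1+1}-\tfrac{1}{k_2+1}\bigr|$ while the variance tends to $0$, so $D_t^{(\alpha_1,\alpha_2)}$ escapes to infinity in probability; and even for $k_1=k_2$ the boundary remainder of order $t^{1/2}\delta_t$ generically diverges in the dense regime when $d\geq 2$. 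The paper never confronts this because in \cite{ReitznerRoemer} all $k_i$ equal $1$ and the centering is implicit there as well. So: your proposal follows the same route as the paper on the substantive part, and your closing concern does not call for sharpened expectation expansions or self-normalization --- it calls for the centering to be made explicit in the statement, after which your argument closes the proof.
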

\begin{proof}
This proof mirrors the reasoning used in \cite[Theorem 4.2]{ReitznerRoemer}.
\end{proof}

The order of $D_t^{(\alpha_1, \alpha_2)}$ and $D_t^{(\alpha)} :=D_t^{(\alpha, 0)}$ in Theorems~\ref{thm:stochconv-dense}, and in the upcoming Corollaries and Theorems of this section follow again from the quantitative multivariate central limit theorem in \cite[Theorem 4.5]{AkinwandeReitzner} in the same way as in \cite{ReitznerRoemer} (see here all details). This means, for $\lim_{t \to \infty} t \delta_t^d = c \in [0, \infty]$ we have
\begin{equation}\label{eq:order-Dt}
\| D_t^{(\alpha)} \|_{d_3} = O(t^{-\frac 12} \max\{1,(t \delta_t^d)^{- \frac 12}\}) + O(\delta_t) 
+ O(\min\{|c-t \delta_t^d|, (t \delta_t^d)^{-1} \}).
\end{equation}

Again, as in \cite{ReitznerRoemer} we can set $\alpha_1=0$ to see special cases happen. Remember, that $a_i =\frac{k_{i}!}{\mu_{k_{i}}^{(\alpha_i)}}$ and  
\begin{align} \label{Eq: muk0}
    \mu_k^{(0)}=\kappa_d^k\mathbb{P}(\Delta_1[0,\{X_l\}^k_{l=1}] \neq 0)
\end{align}
 gives the probability that $k$ random points in $B_d$ have pairwise distances at most
one.
\begin{cor}\label{cor:dense-conv}
In the supercritical regime for an admissible sequence $(k,0), (k,\alpha)$ and $4\alpha >-d+k-1$, the number of $k$-dimensional simplices, given through $\mathcal{V}_{k}^{(0)}$, asymptotically determines $\mathcal{V}_{k}^{(\alpha)}$,
\begin{equation}\label{eq:stochconv-dense}
\tilde{\mathcal{V}}_{k}^{(\alpha)} = \frac {\mu_{k}^{(\alpha)}} {\mu_{k}^{(0)}} \tilde{\mathcal{V}}_{k}^{(0)}+ D_t^{(\alpha)},
\end{equation} 
where $\mu_k^{(0)}$ is as in (\ref{Eq: muk0}) and $ D_t^{(\alpha)} $ satisfies (\ref{eq:order-Dt}).
\end{cor}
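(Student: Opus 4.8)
The plan is to read the corollary as the special case of Theorem~\ref{thm:stochconv-dense} in which both functionals carry the same dimension $k$, with powers $0$ and $\alpha$, and then to rearrange the resulting convergence statement so that $\tilde{\mathcal{V}}_k^{(\alpha)}$ stands alone on the left-hand side. All of the genuine analytic content is already contained in Theorem~\ref{thm:stochconv-dense} and in the order estimate~(\ref{eq:order-Dt}); what remains is bookkeeping of the normalizing constants $a_i$.

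First I would verify that $(k,\alpha),(k,0)$ forms an admissible sequence meeting the extra hypothesis $4\alpha>-d+k-1$ demanded by Theorem~\ref{thm:stochconv-dense}. Condition (i) holds since both dimensions equal $k$, condition (ii) holds because $\alpha\neq 0$, and condition (iv) forces $k\le d$ (for $k>d$ the power would have to vanish, violating distinctness). Under $k\le d$ one has $-d+k-1<0$, and from $4\alpha>-d+k-1$ it follows that $\alpha>(-d+k-1)/4>-d+k-1$ and $2\alpha>(-d+k-1)/2>-d+k-1$, so all inequalities of condition (iii) are satisfied. The inequality $4\alpha>-d+k-1$ is precisely what guarantees finiteness of the fourth-order moments entering the variance bound used to prove the convergence in Theorem~\ref{thm:stochconv-dense}.

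Next I would invoke Theorem~\ref{thm:stochconv-dense} for this sequence. With $a_1=k!/\mu_k^{(\alpha)}$ and $a_2=k!/\mu_k^{(0)}$ it gives $D_t^{(\alpha,0)}=a_1\tilde{\mathcal{V}}_k^{(\alpha)}-a_2\tilde{\mathcal{V}}_k^{(0)}\stackrel{\mathds P}{\to}0$. Dividing by $a_1$ and using $a_2/a_1=\mu_k^{(\alpha)}/\mu_k^{(0)}$ yields exactly~(\ref{eq:stochconv-dense}), where the remainder $D_t^{(\alpha)}=(\mu_k^{(\alpha)}/k!)\,D_t^{(\alpha,0)}$ is a fixed ($t$-independent) scalar multiple of $D_t^{(\alpha,0)}$. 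Since multiplication by such a constant does not change the asymptotic order, the bound~(\ref{eq:order-Dt}) transfers verbatim from the estimate recorded after Theorem~\ref{thm:stochconv-dense}, which in turn rests on the quantitative multivariate central limit theorem of \cite[Theorem~4.5]{AkinwandeReitzner}.

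There is no serious obstacle: the corollary is essentially a reformulation of Theorem~\ref{thm:stochconv-dense}, and the only points requiring attention are the collapse of the constant $a_2/a_1$ to the ratio $\mu_k^{(\alpha)}/\mu_k^{(0)}$ and the verification that the single inequality $4\alpha>-d+k-1$ implies every admissibility condition invoked. Mirroring the argument of \cite[Theorem~4.2]{ReitznerRoemer} for the length power functional, the interpretation that $\mathcal{V}_k^{(0)}$, the number of $k$-simplices, asymptotically determines $\mathcal{V}_k^{(\alpha)}$ is then immediate from~(\ref{eq:stochconv-dense}).
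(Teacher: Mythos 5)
Your proposal is correct and follows essentially the same route as the paper: specialize Theorem~\ref{thm:stochconv-dense} to the pair $(k,0),(k,\alpha)$, divide by the normalizing constant so that $a_2/a_1$ collapses to $\mu_k^{(\alpha)}/\mu_k^{(0)}$, and note that rescaling the remainder by a $t$-independent constant leaves the order estimate~(\ref{eq:order-Dt}) intact. Your admissibility check (including that condition (iv) forces $k\le d$, so $4\alpha>-d+k-1$ implies all inequalities in condition (iii)) is a careful spelling-out of what the paper leaves implicit.
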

There are many more relations between $\tilde{\mathcal{V}}_{k_1}^{(\alpha_1)}, \ldots, \tilde{\mathcal{V}}_{k_n}^{(\alpha_n)}$. Since $\Sigma_n$ is of rank one in the supercritical regime, all of these relations can be deduced from Theorem \ref{thm:stochconv-dense}.

Suppose now that $Y \sim \mathcal{N}(0, \Sigma_2)$, where $\Sigma_2$ is in any cases a positive definite $2 \times 2$-matrix, i.e., in the subcritical regime. As $\Sigma_2$ in the critical regime is not always positive definite, we won't consider it here. Now we study the Cholesky decomposition in a similar way as in \cite{ReitznerRoemer}. For this let $G$ be the Cholesky factor of $\Sigma_2$, which means that $\Sigma_2 = GG^t$. Then the identity

$$
G^{-1} \Sigma_2 (G^{-1})^t = I_2
$$

holds, implying that

$$
G^{-1} Y = (Z_1, Z_2),
$$

where $Z_1$ and $Z_2$ are independent $\mathcal{N}(0,1)$-random variables. For

$$
Y = \lim_{t \to \infty} (\tilde{\mathcal{V}}_{k_1}^{(\alpha_1)}, \tilde{\mathcal{V}}_{k_2}^{(\alpha_2)}),
$$

we have then

$$
G^{-1} Y = \lim_{t \to \infty} \left( g_{11} \tilde{V}_{k_1}^{(\alpha_1)},\; g_{21} \tilde{V}_{k_1}^{(\alpha_2)} + g_{22} \tilde{V}_{k_2}^{(\alpha_2)} \right),
$$
 where $G^{-1}= (g_{ij})$. This leads to the upcoming theorem. But first we apply Theorem \ref{thm: compositions subcritical} and see that with respect to distinct $k_i$-simplices it holds that
\[G = (\Sigma_2)^{1/2},\] which is why
\[ G^{-1} =((\Sigma_2)^{1/2})^{-1}. \]
Since $\Sigma_2=(\sigma_{ij})$ is a $2 \times 2$-matrix, we have 
\[G =  \left( \begin{array}{cccc}   
\sqrt{\sigma_{11}}& 0 \\
0& \sqrt{\sigma_{22}}\\
\end{array}\right) \] 
and therefore
\[G^{-1} = \left( \begin{array}{cccc}   
g_{11}& 0 \\
0& g_{22}\\
\end{array}\right)= \left( \begin{array}{cccc}   
\frac{1}{\sqrt{\sigma_{11}}}& 0 \\
0& \frac{1}{\sqrt{\sigma_{22}}}\\
\end{array}\right).\] 
There exists no row in $G^{-1}$ which includes non-zero entries in the first and in the second column, which is why we are not able to determine any connections between the functionals $\tilde{\mathcal{V}}_{k_1}^{(\alpha_1)}$ and $\tilde{\mathcal{V}}_{k_2}^{(\alpha_2)}$ from this consideration.

We are also able to compute the Cholesky decomposition for $\Sigma_2$ in the subcritical regime with respect to non-distinct $k_i$. We see that in this case the matrices are
\[G = \left( \begin{array}{cccc}   
\sqrt{\frac{\mu_k^{(2\alpha_1)}}{(k+1)!}}& 0 \\
\frac{\mu_k^{(\alpha_1+\alpha_2)}}{\sqrt{(k+1)!\mu_k^{(2\alpha_1)}}}& \sqrt{\frac{\mu_k^{(2\alpha_2)}\mu_k^{(2\alpha_1)}-(\mu_k^{(\alpha_1+\alpha_2})^2}{(k+1)!\mu_k^{(2\alpha_1)}}}\\
\end{array}\right)\] 
and 
\begin{align} \label{Eq: cholesky non-distinct k}
    G^{-1} = \left( \begin{array}{cccc}   
\frac{(k+1)!}{\sqrt{\mu_k^{(2\alpha_1)}}}& 0 \\
-\frac{\sqrt{(k+1)!}\mu_k^{(\alpha_1+\alpha_2)}}{\sqrt{\mu_k^{(2\alpha_1)}(\mu_k^{(2\alpha_2)}\mu_k^{(2\alpha_1)}-(\mu_k^{(\alpha_1+\alpha_2)})^2)}}& \frac{\sqrt{(k+1)!\mu_k^{(2\alpha_1)}}}{\sqrt{\mu_k^{(2\alpha_1)}\mu_k^{(2\alpha_2)}-(\mu_k^{(\alpha_1+\alpha_2)})^2}}\\
\end{array}\right).
\end{align}

\begin{thm}\label{thm:stochconv-(sub)crit}
Let $t\delta_t^d \rightarrow 0$ and $(k,\alpha_1), (k,\alpha_2)$ be an admissible sequence with $4\alpha_i >-d+k-1$ for $i\in \{1,2\}$. Then 
\begin{equation*}
\tilde{ \mathcal{V}}_k^{(\alpha_2)} = \frac{\mu_k^{(\alpha_1+\alpha_2)}}{\mu_k^{(2\alpha_1)}}\tilde{ \mathcal{V}}_k^{(\alpha_1)}
+ \frac{\sqrt{\mu_k^{(2\alpha_1)}\mu_k^{(2\alpha_2)}-(\mu_k^{(\alpha_1+\alpha_2)})^2}}{\sqrt{(k+1)!\mu_k^{(2\alpha_1)}}} Z 
+ D_t^{(\alpha_1, \alpha_2)} 
\end{equation*}
as $t \to \infty$, with $Z\sim \mathcal N(0,1)$ independent of $\tilde{\mathcal{V}}_k^{(\alpha_1)}$ and 
$D_t^{(\alpha_1, \alpha_2)}$ satisfies (\ref{eq:order-Dt}).
\end{thm}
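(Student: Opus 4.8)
The plan is to run the Cholesky argument outlined before the theorem at the level of the Gaussian limit, and then absorb the finite-$t$ discrepancy into $D_t^{(\alpha_1,\alpha_2)}$ by means of the quantitative central limit theorem. First I would pin down $\Sigma_2$ explicitly. Since both entries of the vector carry the same index $k$, the subcritical covariance matrix is $\Sigma_2 = A_0^{<1}$, and by (\ref{Eq: Am>1}) its $(l,j)$-entry equals $\mu_{k,k:k+1}^{(\alpha_l,\alpha_j)}/(k+1)!$: the collapse index is $\tfrac{k+k-0+2}{2}=k+1$, the indicator is satisfied, and the factorials reduce to $(k+1)!$. Unwinding (\ref{Eq: mu ki kj}) with $m=k+1$ shows that the two volume factors are integrated over the \emph{same} $k$ points, so $\mu_{k,k:k+1}^{(\alpha_l,\alpha_j)} = \mu_k^{(\alpha_l+\alpha_j)}$ and
\[
\Sigma_2 = \frac{1}{(k+1)!}\begin{pmatrix}\mu_k^{(2\alpha_1)} & \mu_k^{(\alpha_1+\alpha_2)}\\[2pt] \mu_k^{(\alpha_1+\alpha_2)} & \mu_k^{(2\alpha_2)}\end{pmatrix}.
\]
This is precisely the matrix whose Cholesky factor $G$ and inverse $G^{-1}=(g_{ij})$ are listed in (\ref{Eq: cholesky non-distinct k}).

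Next I would invoke asymptotic normality. By \cite[Theorem 4.5]{AkinwandeReitzner}, which applies because the strengthened condition $4\alpha_i>-d+k-1$ secures the higher-order integrability the quantitative CLT needs, the normalized vector converges in distribution to $Y\sim\mathcal{N}(0,\Sigma_2)$. Following the displayed computation preceding the theorem, $G^{-1}Y=(Z_1,Z_2)$ with $Z_1,Z_2$ independent $\mathcal{N}(0,1)$; its second coordinate reads $Z_2=g_{21}\tilde{\mathcal{V}}_k^{(\alpha_1)}+g_{22}\tilde{\mathcal{V}}_k^{(\alpha_2)}$ in the limit. Solving for $\tilde{\mathcal{V}}_k^{(\alpha_2)}$ gives $\tilde{\mathcal{V}}_k^{(\alpha_2)} = -(g_{21}/g_{22})\tilde{\mathcal{V}}_k^{(\alpha_1)}+(1/g_{22})Z$ with $Z:=Z_2$. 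As $g_{12}=0$, the variable $Z_1=g_{11}\tilde{\mathcal{V}}_k^{(\alpha_1)}$ is a scalar multiple of $\tilde{\mathcal{V}}_k^{(\alpha_1)}$ and $Z_1\perp Z_2$, so $Z$ is independent of $\tilde{\mathcal{V}}_k^{(\alpha_1)}$, as claimed.

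It then remains to simplify the two coefficients from the entries of $G^{-1}$ in (\ref{Eq: cholesky non-distinct k}). A direct computation gives $1/g_{22}=\sqrt{\mu_k^{(2\alpha_1)}\mu_k^{(2\alpha_2)}-(\mu_k^{(\alpha_1+\alpha_2)})^2}\,/\sqrt{(k+1)!\,\mu_k^{(2\alpha_1)}}$, which matches the coefficient of $Z$; and in the product $-g_{21}/g_{22}$ the factors $\sqrt{(k+1)!}$ and $\sqrt{\mu_k^{(2\alpha_1)}\mu_k^{(2\alpha_2)}-(\mu_k^{(\alpha_1+\alpha_2)})^2}$ cancel, leaving $\mu_k^{(\alpha_1+\alpha_2)}/\mu_k^{(2\alpha_1)}$, the coefficient of $\tilde{\mathcal{V}}_k^{(\alpha_1)}$. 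This establishes the asserted identity between the limits.

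Finally I would set $D_t^{(\alpha_1,\alpha_2)}$ to be the discrepancy between the finite-$t$ functional and the right-hand combination, and bound it exactly as in \cite[Theorem 4.2]{ReitznerRoemer}: the order (\ref{eq:order-Dt}) is inherited from the quantitative multivariate CLT in \cite[Theorem 4.5]{AkinwandeReitzner}. I expect this last step to be the main obstacle, since the displayed identity is an exact equality only between the Gaussian limits, whereas at finite $t$ the functionals are neither Gaussian nor exactly related; the genuine point is that transforming by the fixed, $t$-independent matrix $G^{-1}$ is compatible with the CLT rate and does not inflate the error beyond (\ref{eq:order-Dt}). The earlier steps -- identifying $\Sigma_2$ and simplifying the $g_{ij}$ -- are routine algebra.
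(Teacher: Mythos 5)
Your proposal is correct and follows essentially the same route as the paper: it identifies $\Sigma_2$ for the non-distinct case, uses the Cholesky inverse $G^{-1}$ from (\ref{Eq: cholesky non-distinct k}) applied to the Gaussian limit guaranteed by \cite[Theorem 4.5]{AkinwandeReitzner}, solves the second coordinate for $\tilde{\mathcal{V}}_k^{(\alpha_2)}$, and absorbs the finite-$t$ discrepancy into $D_t^{(\alpha_1,\alpha_2)}$ with the rate (\ref{eq:order-Dt}). In fact you supply details the paper leaves implicit (the verification $\mu_{k,k:k+1}^{(\alpha_l,\alpha_j)}=\mu_k^{(\alpha_l+\alpha_j)}$, the independence of $Z$ from $\tilde{\mathcal{V}}_k^{(\alpha_1)}$, and the coefficient simplifications), so no gap remains.
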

\begin{proof}

In the subcritical regime for non-distinct $k_i$ we know $G^{-1}$ from (\ref{Eq: cholesky non-distinct k}), which leads to
\[Z + D_t^{(\alpha_1,\alpha_2)} = -\frac{\sqrt{(k+1)!}\mu_k^{(\alpha_1+\alpha_2)}\tilde{\mathcal{V}}_k ^{(\alpha_1)} }{\sqrt{\mu_k^{(2\alpha_1)}(\mu_k^{(2\alpha_2)}\mu_k^{(2\alpha_1)}-(\mu_k^{(\alpha_1+\alpha_2)})^2)}} +\frac{\sqrt{(k+1)!\mu_k^{(2\alpha_1)}} \tilde{\mathcal{V}}_k^{(\alpha_2)}}{\sqrt{\mu_k^{(2\alpha_1)}\mu_k^{(2\alpha_2)}-(\mu_k^{(\alpha_1+\alpha_2)})^2}}.\]
This concludes the proof immediately.
\end{proof}

The special case $\alpha_1 = 0$ is stated below and is similar to the case in the supercritical regime from Corollary \ref{cor:dense-conv}. \\
\begin{cor} \label{Cor: subcrit alpha 0}
In the subcritical regime for an admissible sequence $(k,0), (k,\alpha)$ and $4\alpha >-d+k-1$ the number of $k$-dimensional simplices $\mathcal{V}_k^{(0)}$ asymptotically determines $\mathcal{V}_k^{(\alpha)}$ up to Gaussian noise,
$$
\tilde{\mathcal{V}}_k^{(\alpha)} = \frac{\mu_k^{(\alpha)}}{\mu_k^{(0)}}\tilde{\mathcal{V}}_k^{(0)}
+ \frac{\sqrt{\mu_k^{(0)}\mu_k^{(2\alpha)}-(\mu_k^{(\alpha_2)})^2}}{\sqrt{(k+1)!\mu_k^{(0)}}} Z 
+ D_t^{(\alpha)}, 
$$
where $Z\sim \mathcal N(0,1)$ is independent of $\tilde{\mathcal{V}}_k^{(0)}$, $\mu_k^{(0)}$ is as in (\ref{Eq: muk0}) and 
$ D_t^{(\alpha)} $ satisfies (\ref{eq:order-Dt}).
\end{cor}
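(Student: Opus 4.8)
The plan is to derive Corollary \ref{Cor: subcrit alpha 0} directly as the special case $\alpha_1 = 0$, $\alpha_2 = \alpha$ of Theorem \ref{thm:stochconv-(sub)crit}, which has just been established. First I would verify that the admissibility hypothesis of the corollary, namely that $(k,0),(k,\alpha)$ is an admissible sequence with $4\alpha > -d + k - 1$, is exactly the hypothesis of Theorem \ref{thm:stochconv-(sub)crit} with $\alpha_1 = 0$ and $\alpha_2 = \alpha$. The only point to check is that the conditions $4\alpha_i > -d+k-1$ for $i \in \{1,2\}$ specialize correctly: for $i=1$ we have $\alpha_1 = 0$, so the requirement reads $0 > -d+k-1$, i.e.\ $k \leq d$, which holds for the relevant dimensions, and for $i=2$ it is precisely $4\alpha > -d+k-1$.

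Next I would substitute $\alpha_1 = 0$ and $\alpha_2 = \alpha$ into the conclusion of Theorem \ref{thm:stochconv-(sub)crit} and simplify the moment terms. The leading coefficient becomes
\[
\frac{\mu_k^{(\alpha_1+\alpha_2)}}{\mu_k^{(2\alpha_1)}} = \frac{\mu_k^{(\alpha)}}{\mu_k^{(0)}},
\]
since $2\alpha_1 = 0$, which matches the stated coefficient with $\mu_k^{(0)}$ as in (\ref{Eq: muk0}). The Gaussian-noise coefficient becomes
\[
\frac{\sqrt{\mu_k^{(2\alpha_1)}\mu_k^{(2\alpha_2)} - (\mu_k^{(\alpha_1+\alpha_2)})^2}}{\sqrt{(k+1)!\,\mu_k^{(2\alpha_1)}}}
= \frac{\sqrt{\mu_k^{(0)}\mu_k^{(2\alpha)} - (\mu_k^{(\alpha)})^2}}{\sqrt{(k+1)!\,\mu_k^{(0)}}},
\]
again using $2\alpha_1 = 0$ and $\alpha_1 + \alpha_2 = \alpha$, which is the coefficient appearing in the corollary. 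The remaining error term $D_t^{(\alpha_1,\alpha_2)}$ specializes to $D_t^{(\alpha)} = D_t^{(\alpha,0)}$ by the notational convention introduced just after Theorem \ref{thm:stochconv-dense}, and it continues to satisfy the order estimate (\ref{eq:order-Dt}), since that estimate holds uniformly for $\lim_{t\to\infty} t\delta_t^d = c \in [0,\infty]$ and in particular in the subcritical regime $c = 0$.

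I do not anticipate a genuine obstacle here, as the statement is a transparent specialization; the only mildly delicate point is purely bookkeeping, namely confirming that the independence claim carries over. In Theorem \ref{thm:stochconv-(sub)crit} the variable $Z \sim \mathcal N(0,1)$ is independent of $\tilde{\mathcal V}_k^{(\alpha_1)}$, and under the substitution $\alpha_1 = 0$ this reads that $Z$ is independent of $\tilde{\mathcal V}_k^{(0)}$, exactly as asserted in the corollary. This independence originates from the fact that $Z_1$ and $Z_2$ in the Cholesky construction $G^{-1}Y = (Z_1,Z_2)$ are independent standard normals, and the lower-triangular structure of $G^{-1}$ in (\ref{Eq: cholesky non-distinct k}) ensures that the first component involves only $\tilde{\mathcal V}_k^{(\alpha_1)}$. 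Collecting these specializations completes the proof.

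\begin{proof}
This is the special case $\alpha_1 = 0$, $\alpha_2 = \alpha$ of Theorem \ref{thm:stochconv-(sub)crit}. Indeed, under this substitution the admissibility hypothesis and the condition $4\alpha_i > -d+k-1$ for $i \in \{1,2\}$ reduce to the hypotheses stated here. Setting $2\alpha_1 = 0$ and $\alpha_1 + \alpha_2 = \alpha$ in the conclusion of Theorem \ref{thm:stochconv-(sub)crit}, the leading coefficient becomes $\mu_k^{(\alpha)}/\mu_k^{(0)}$ and the noise coefficient becomes
\[
\frac{\sqrt{\mu_k^{(0)}\mu_k^{(2\alpha)} - (\mu_k^{(\alpha)})^2}}{\sqrt{(k+1)!\,\mu_k^{(0)}}},
\]
with $\mu_k^{(0)}$ as in (\ref{Eq: muk0}). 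The variable $Z \sim \mathcal N(0,1)$ remains independent of $\tilde{\mathcal V}_k^{(0)}$, and the error term specializes to $D_t^{(\alpha)} = D_t^{(\alpha,0)}$, which satisfies (\ref{eq:order-Dt}) in the subcritical regime. This gives the stated identity.
\end{proof}
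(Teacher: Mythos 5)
Your proposal is correct and matches the paper's approach exactly: the paper offers no separate argument for Corollary \ref{Cor: subcrit alpha 0}, presenting it precisely as the specialization $\alpha_1=0$, $\alpha_2=\alpha$ of Theorem \ref{thm:stochconv-(sub)crit}, which is what you carry out. Your simplification also correctly shows the noise coefficient involves $(\mu_k^{(\alpha)})^2$, confirming that the $(\mu_k^{(\alpha_2)})^2$ in the corollary's display is merely a typographical leftover.
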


\section{Investigation} \label{Section: volume power functional}

Now we carry out the algebraic analysis of $\Sigma_n$ in each of the regimes from the case distinction in (\ref{Eq: Kov.matrix Laengen Potenz Funktional}). This means, that we are discussing all the regimes separately in terms of algebraic invariants such as the rank, determinant, definiteness, eigenvalues and decompositions.

\subsection{Supercritical regime}

First, $\Sigma_n$ is considered in the supercritical regime (i.e., $t\delta_t^d \rightarrow \infty$) for $n\geq 2$. 
Recall, that in this regime we discuss the following matrix:

\[
 \Sigma_n
=A_0^{>1} = \Bigl( 
\frac{\mu_{k_i, k_j:1}^{(\alpha_i , \alpha_j)} }{k_j!k_i!} 
 \Bigr)\in  \R^{n \times n}.
\]
For further study, we need to introduce some additional abbreviations.

\begin{align}\label{Eq: Abbr}
  a_i =\frac{k_{i}!}{\mu_{k_{i}}^{(\alpha_i)}}\,\text{ for }\,  i \in [n]\text{ and }
   b=\sum_{k=1}^n \prod_{l \in [n] \setminus \{k\}}a_l^2.
\end{align}

Using the $a_i$ and the identity $\mu_{k_i, k_j:1}^{(\alpha_i , \alpha_j)} = \mu_{k_{i}}^{(\alpha_i)} \mu_{k_{j}}^{(\alpha_j)} $, the matrix $\Sigma_n$ can be rewritten as
\begin{equation}
\label{Eq: Sigma superkri}
\Sigma_n
=
\left( 
\begin{array}{cccc}
\frac{1}{a_1^2} & \frac{1}{a_1 a_2}  & \ldots & \frac{1}{a_1 a_n} \\
\frac{1}{a_1a_2} & \frac{1}{a_2^2} & \ldots  & \frac{1}{a_2 a_n}  \\
\vdots & \vdots& \ldots & \vdots \\
\frac{1}{a_1 a_n} & \frac{1}{a_2a_n} & \ldots & \frac{1}{a_n^2}\\
\end{array}
\right).
\end{equation}

Observe, that exactly the matrix structure from (\ref{Eq: Sigma superkri}) appears already in \cite[Equation (9)]{ReitznerRoemer} and has been studied there extensively for the length power vector, which is the special case $k_l= 1$ for $l= 1,\ldots, n$, i.e. $a_l = \alpha_l+d$.
One can take over findings from this case and receive the following theorems by exchanging the special case of $a_i$ from \cite{ReitznerRoemer} by the general $a_i$ from (\ref{Eq: Abbr}) in all results and their proofs. 

\begin{thm} \label{Thm: EW.e, ER.e superkrit. Reg.} 
Let $\Sigma_n$ be defined as in (\ref{Eq: Kov.matrix Laengen Potenz Funktional}) in the supercritical regime for $n\geq2$. Then the eigenvalues of $\Sigma_n$ are $\lambda_1 =0$ and $\lambda_2 =\sum_{i=1}^n 1/ a_i^2$.
Let
\[
v_1 = ( \frac{a_1}{a_2},-1,0,\ldots,0)^t,   \ldots ,\, 
v_{n-1}=( \frac{a_1}{a_n},0,\ldots,0,-1,)^t,
\text{ and }
v_n = \left(a_n/a_1 ,\, a_n/a_2 ,\ldots, \, 1\right)^t.
\]
The eigenspaces of $\Sigma_n$ are given through
$\emph{eig}(\Sigma_n, \, \lambda_1)
= 
\langle 
v_1, \ldots, v_{n-1} 
\rangle \text{ and }
\emph{eig}(\Sigma_n, \,\lambda_2) =
\langle 
v_n
\rangle. 
$
\end{thm}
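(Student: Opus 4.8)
The plan is to exploit the rank-one structure of $\Sigma_n$ that is already visible in (\ref{Eq: Sigma superkri}). Setting $w = (1/a_1, \ldots, 1/a_n)^t$, the factorization identity $\mu_{k_i,k_j:1}^{(\alpha_i,\alpha_j)} = \mu_{k_i}^{(\alpha_i)}\mu_{k_j}^{(\alpha_j)}$ gives each entry $(\Sigma_n)_{ij} = 1/(a_i a_j) = w_i w_j$, so that $\Sigma_n = w w^t$. A symmetric rank-one matrix of this form has a completely determined spectral picture, and all the assertions of the theorem follow by reading it off.

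First I would record the general fact for $w \neq 0$: the image of $w w^t$ is the line $\langle w \rangle$, and $(w w^t) w = w (w^t w) = \|w\|^2 w$, so $w$ is an eigenvector with eigenvalue $\|w\|^2$. Since $w w^t$ has rank one, its kernel is the orthogonal complement $w^\perp$, which is the eigenspace for the eigenvalue $0$ and has dimension $n-1$. This immediately yields the two eigenvalues $\lambda_1 = 0$ and $\lambda_2 = \|w\|^2 = \sum_{i=1}^n 1/a_i^2$.

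Next I would match the claimed eigenvectors with these two eigenspaces. For $\lambda_2$, note that $v_n = (a_n/a_1, \ldots, a_n/a_n)^t = a_n w$ is a nonzero multiple of $w$, hence spans $\emph{eig}(\Sigma_n, \lambda_2) = \langle w \rangle$. For $\lambda_1 = 0$, I would verify that each $v_k$ (for $k = 1, \ldots, n-1$), which has entry $a_1/a_{k+1}$ in coordinate $1$ and $-1$ in coordinate $k+1$ and zeros elsewhere, satisfies $w^t v_k = \tfrac{1}{a_1}\cdot\tfrac{a_1}{a_{k+1}} - \tfrac{1}{a_{k+1}} = 0$; hence $\Sigma_n v_k = w(w^t v_k) = 0$ and $v_k \in w^\perp$. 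The vectors $v_1, \ldots, v_{n-1}$ are linearly independent because their $-1$ entries occupy the pairwise distinct coordinates $2, \ldots, n$. As $\dim w^\perp = n-1$, these $n-1$ independent vectors form a basis, so $\emph{eig}(\Sigma_n, \lambda_1) = \langle v_1, \ldots, v_{n-1}\rangle$.

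The argument is essentially a verification once the rank-one structure is in hand, so I expect no serious obstacle. The only point requiring a little care is the dimension count that upgrades ``the $v_k$ lie in the kernel and are independent'' to ``they span the whole eigenspace,'' which rests on the rank of $w w^t$ being exactly one; this in turn needs $w \neq 0$, i.e.\ that the constants $a_i = k_i!/\mu_{k_i}^{(\alpha_i)}$ are finite and nonzero, which is guaranteed by the admissibility of the sequence.
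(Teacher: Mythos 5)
Your proof is correct. It rests on the same structural fact the paper uses, namely the rewriting of $\Sigma_n$ in the form (\ref{Eq: Sigma superkri}), i.e. $\Sigma_n = ww^t$ with $w = (1/a_1,\ldots,1/a_n)^t$, but the two arguments then part ways in how they exploit it: the paper gives no direct proof at all, instead invoking \cite{ReitznerRoemer}, where exactly this rank-one matrix appears in the special case $k_l = 1$ (so $a_l = \alpha_l + d$), and asserting that all results and proofs transfer verbatim once the special $a_i$ are replaced by the general ones from (\ref{Eq: Abbr}). You instead give a short self-contained spectral argument: a nonzero rank-one matrix $ww^t$ has image $\langle w \rangle$ and kernel $w^\perp$, hence eigenvalues $\|w\|^2 = \sum_{i=1}^n 1/a_i^2$ and $0$ with the stated multiplicities; you then check $v_n = a_n w$, check $w^t v_k = 0$ for $k = 1,\ldots,n-1$, and upgrade membership in the kernel to spanning it via linear independence and the dimension count $\dim w^\perp = n-1$. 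What your route buys is independence from the external reference and an explicit record of the one hypothesis everything hinges on ($w \neq 0$, i.e. $0 < \mu_{k_i}^{(\alpha_i)} < \infty$, which admissibility guarantees); what the paper's route buys is brevity and uniformity with the length power case. Both are sound, and your dimension-count step is precisely the point that a mere ``verify the eigenvector equations'' argument would have missed.
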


\begin{cor} 
\label{Kor: Rang superkrit. Reg.}
Assuming the situation of Theorem \ref{Thm: EW.e, ER.e superkrit. Reg.}, we can conclude:~(i) $\emph{rank}(\Sigma_n) = 1$; (ii) $\emph{det}(\Sigma_n) =0$;
(iii) $\Sigma_n$ is positive semidefinite but, not positive definite.

\end{cor}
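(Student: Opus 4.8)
The plan is to exploit the rank-one structure that is already manifest in (\ref{Eq: Sigma superkri}). Setting $u = (1/a_1, \ldots, 1/a_n)^t$, I would first observe that $\Sigma_n = u u^t$, since the $(i,j)$-entry of $\Sigma_n$ is exactly $\frac{1}{a_i}\cdot\frac{1}{a_j}$. Because each $a_i = k_i!/\mu_{k_i}^{(\alpha_i)}$ is a positive real number, the vector $u$ is nonzero, and all three claims follow from this single factorization. Alternatively, one may read off everything directly from the eigenvalue and eigenspace data already supplied by Theorem \ref{Thm: EW.e, ER.e superkrit. Reg.}; I would present both routes, since they reinforce each other.

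For (i), the rank of a nonzero outer product $u u^t$ is $1$. Equivalently, since $\Sigma_n$ is symmetric, its rank equals the number of nonzero eigenvalues counted with multiplicity, and Theorem \ref{Thm: EW.e, ER.e superkrit. Reg.} gives $\lambda_1 = 0$ with the $(n-1)$-dimensional eigenspace $\langle v_1, \ldots, v_{n-1}\rangle$ together with the single nonzero eigenvalue $\lambda_2 = \sum_{i=1}^n 1/a_i^2$, so $\operatorname{rank}(\Sigma_n) = 1$. For (ii), since $n \geq 2$ the rank $1 < n$ forces $\Sigma_n$ to be singular, hence $\det(\Sigma_n) = 0$; one can also note $\det(\Sigma_n) = \lambda_1^{\,n-1}\lambda_2 = 0$ because $0$ is an eigenvalue and $n-1 \geq 1$.

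For (iii), positive semidefiniteness is immediate from the factorization: for any $x \in \R^n$ one has $x^t \Sigma_n x = (u^t x)^2 \geq 0$. The form fails to be positive definite precisely because it vanishes on a nontrivial subspace: any nonzero $x$ orthogonal to $u$ (which exists since $n \geq 2$) gives $x^t \Sigma_n x = 0$, and concretely the vectors $v_1, \ldots, v_{n-1}$ from Theorem \ref{Thm: EW.e, ER.e superkrit. Reg.} span this kernel. Equivalently, $\lambda_1 = 0$ is an eigenvalue, so $\Sigma_n$ is positive semidefinite but not positive definite.

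I expect essentially no obstacle here; the only point deserving a word of justification is that each $\mu_{k_i}^{(\alpha_i)} > 0$, so that the $a_i$, and hence $u$, are well-defined and nonzero. This is exactly where the admissibility conditions on $(k_i, \alpha_i)$, guaranteeing integrability and strict positivity of (\ref{Eq: mu k}), enter. Once that is noted, the corollary is a direct consequence of Theorem \ref{Thm: EW.e, ER.e superkrit. Reg.}.
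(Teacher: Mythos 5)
Your proposal is correct and follows essentially the same route as the paper: the paper deduces the corollary directly from the eigenvalue and eigenspace data of Theorem \ref{Thm: EW.e, ER.e superkrit. Reg.} (one positive eigenvalue, a zero eigenvalue of multiplicity $n-1$), and your rank-one factorization $\Sigma_n = u u^t$ is just the structure already manifest in (\ref{Eq: Sigma superkri}). Your added remark that admissibility guarantees $\mu_{k_i}^{(\alpha_i)} \in (0,\infty)$, so the $a_i$ are well-defined and positive, is a sound (and slightly more careful) justification than the paper makes explicit.
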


The eigenvalue and eigenspace structure of $\Sigma_n$, as established in the preceding theorem, yields the first matrix decomposition.

\begin{cor} 
\label{Kor: Diag.matrix, char. Poly. superkrit. Reg.}
Assuming the situation of Theorem \ref{Thm: EW.e, ER.e superkrit. Reg.} set  $D=\text{diag}(\lambda_1,\dots,\lambda_1,\lambda_2)\in \R^{n \times n}$
and $S \in \R^{n \times n}$ with column vectors $v_1, \dots,v_n$.
Under the given conditions $\Sigma_n$ is similar to $D$ and $D=S^{-1} \Sigma_n S$, where $S^{-1} = (\tilde{s}_{ij})\in \R^{n \times n}$ with entries
\begin{align*} 
\tilde{s}_{ij} = \left\{
\begin{array}{ll}
\frac{\prod_{k=1}^n a_k^2}{a_{j}a_n b}&\text{ for }\,i=n,\\
-\frac{\sum_{k \in [n] \setminus \{i\}} a_{i}^2\prod_{l\in [n] \setminus \{k, \, i\}} a_{l}^2}{b}&\text{ for}\,j=i+1,\\
\frac{\prod_{k=1}^n a_k^2}{a_{i+1}a_j b}& \text{ else.}\\
\end{array}
\right.
\end{align*} 
\end{cor}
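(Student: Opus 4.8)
The plan is to reduce the statement to two ingredients: the invertibility of $S$ and an explicit inversion, since everything else is formal. First I would note that by Theorem \ref{Thm: EW.e, ER.e superkrit. Reg.} each column $v_j$ of $S$ is an eigenvector of $\Sigma_n$, with $v_1,\dots,v_{n-1}$ attached to $\lambda_1=0$ and $v_n$ to $\lambda_2$. Reading this columnwise gives $\Sigma_n S = S D$. Hence, as soon as $S$ is invertible, the similarity $\Sigma_n\sim D$ and the identity $D=S^{-1}\Sigma_n S$ follow at once, and the only genuine task left is to compute $S^{-1}$ entrywise.

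For invertibility I would show the $n$ columns are linearly independent. The vectors $v_1,\dots,v_{n-1}$ are independent because each carries a $-1$ in a distinct coordinate $2,\dots,n$ (the lower $(n-1)\times(n-1)$ block of $S$ is $-I_{n-1}$), so they span the $(n-1)$-dimensional space $\mathrm{eig}(\Sigma_n,0)$. Since $v_n\in\mathrm{eig}(\Sigma_n,\lambda_2)$ with $\lambda_2=\sum_i 1/a_i^2>0$, it is independent of the kernel, and therefore $S$ is invertible.

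The heart of the proof is the inversion, which I would carry out via the dual-basis (biorthogonality) characterization: writing the rows of $S^{-1}$ as $w_1^t,\dots,w_n^t$, they are determined by $w_i^t v_j=\delta_{ij}$. The key structural observation is that $\Sigma_n=u\,u^t$ with $u=(1/a_1,\dots,1/a_n)^t$, so $\Sigma_n$ is symmetric with $\Image(\Sigma_n)=\langle u\rangle$ and $\Ker(\Sigma_n)=u^{\perp}$, and moreover $v_n=a_n u$. For the last row, $w_n$ must be orthogonal to $v_1,\dots,v_{n-1}$, i.e.\ $w_n\in\langle u\rangle$; normalizing through $w_n^t v_n=1$ gives $w_n=\tfrac{1}{a_n\lambda_2}\,u$, which, using $b=\bigl(\prod_k a_k^2\bigr)\lambda_2$, is exactly the stated entries for $i=n$. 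For $i<n$ the condition $w_i^t v_n=0$ forces $w_i\in u^{\perp}$, while $w_i^t v_j=\delta_{ij}$ for $j\le n-1$ reduces, since $v_j=\tfrac{a_1}{a_{j+1}}e_1-e_{j+1}$, to a triangular system expressing each coordinate of $w_i$ through its first one; imposing $w_i^t u=0$ then pins down that first coordinate via $\sum_m 1/a_m^2=\lambda_2$, and back-substitution yields the remaining entries, which after rewriting through $\prod_k a_k^2$ and $b$ give the closed forms in the ``else'' and ``$j=i+1$'' cases.

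I expect the main obstacle to be the bookkeeping in this last step for general $n$: the coordinate $i+1$ plays a distinguished role (it is where the $-1$ of $v_i$ sits), so it must be separated from the generic summation, and one has to track with care exactly which factor among the $a_l$ appears in the ``diagonal-type'' entry $\tilde s_{i,i+1}$. Checking the case $n=2$ directly, where $S^{-1}$ can be computed from the $2\times2$ adjugate, is a useful consistency test before committing to the general closed form.
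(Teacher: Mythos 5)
Your strategy is sound and, unlike the paper, self-contained: the paper offers no argument for this corollary beyond the remark that the matrix (\ref{Eq: Sigma superkri}) is exactly the one studied in \cite{ReitznerRoemer} for the length power case $a_l=\alpha_l+d$, so that the statement and its proof ``carry over'' with the general $a_i$ from (\ref{Eq: Abbr}). Your reduction --- $\Sigma_n S=SD$ from Theorem \ref{Thm: EW.e, ER.e superkrit. Reg.}, invertibility of $S$ from the $-I_{n-1}$ block together with $\lambda_2>0$, then inversion of $S$ via the dual-basis relations $w_i^t v_j=\delta_{ij}$ combined with the rank-one structure $\Sigma_n=uu^t$, $v_n=a_n u$ and $b=\bigl(\prod_k a_k^2\bigr)\lambda_2$ --- is correct, and it gives a genuine proof where the paper gives only a citation.

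The problem is the last sentence of your third step: the back-substitution does \emph{not} reproduce the stated closed form in the $j=i+1$ case. Your own equations give $(w_i)_1=\frac{1}{a_1 a_{i+1}\lambda_2}$ and $(w_i)_{j+1}=\frac{1}{a_{j+1}a_{i+1}\lambda_2}-\delta_{ij}$, hence
\begin{align*}
\tilde s_{i,i+1}=\frac{1}{a_{i+1}^2\lambda_2}-1
=-\frac{\sum_{k\in[n]\setminus\{i+1\}}\prod_{l\in[n]\setminus\{k\}}a_l^2}{b},
\end{align*}
so the index excluded from the sum is $i+1$ (the coordinate carrying the $-1$ of $v_i$), not $i$. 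The corollary instead excludes $i$: since $a_i^2\prod_{l\in[n]\setminus\{k,i\}}a_l^2=\prod_{l\in[n]\setminus\{k\}}a_l^2$, its formula reads $-\frac{1}{b}\sum_{k\in[n]\setminus\{i\}}\prod_{l\in[n]\setminus\{k\}}a_l^2$, which differs from yours whenever $a_i^2\neq a_{i+1}^2$. The $n=2$ check you propose decides the matter: the adjugate yields $\tilde s_{12}=-a_2^2/(a_1^2+a_2^2)$, consistent with your derivation, whereas the printed formula yields $-a_1^2/(a_1^2+a_2^2)$, which fails $S^{-1}S=I_2$. So your method is right and in fact exposes an index error in the statement as printed; but the proposal as written asserts the identification with the stated entries without carrying out precisely the bookkeeping you yourself flagged as the main obstacle, and that identification, taken literally, is false.
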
 

In the preceding result, we obtained the \textit{Schur decomposition} of $\Sigma_n$, given by $\Sigma_n = SDS^{-1}$ (see, e.g., \cite{Horn}). In addition to the previous findings, \cite{ReitznerRoemer} also allows for the derivation of LU, Cholesky, and matrix root decompositions.  See the next theorem for details.

\begin{thm}  
\label{Satz: LR-Zerleg. superkrit. Reg.}
For $n\geq2$ let $\Sigma_n$ be defined as in (\ref{Eq: Kov.matrix Laengen Potenz Funktional}) in the supercritical regime. The following statements then hold.
\begin{enumerate}
\item 
An LU decomposition $\Sigma_n =LU$ of $\Sigma_n$ is given by the matrices $L= (l_{ij})\in \R^{n \times n}$ and $U=(u_{ij})\in \R^{n \times n}$ which have the entries
\begin{align*} 
l_{ij} = \left\{
\begin{array}{ll}
1&\text{ for }\,i=j,\\
\frac{a_1}{a_i} & \text{ for }\,i >1  \text{ and }  j=1,\\
0& \text{ else}.\\
\end{array}
\right.\, \text{ and }\,
u_{ij} = \left\{
\begin{array}{ll}
\frac{1}{a_1a_j} & \text{ for }\,i=1,\\
0& \text{ else}.\\
\end{array}
\right.
\end{align*}
\item
A Cholesky decomposition $ \Sigma_n = G G^t$ of $\Sigma_n$ is given by the matrix $G= (g_{ij})\in \R^{n \times n}$ which has entries 
\begin{align*} 
g_{ij} = \left\{
\begin{array}{ll}
\frac{1}{a_i}& \text{ for } j=1,\\
0& \text{ else}.\\
\end{array} \right.
\end{align*}
\item
A matrix root $B=(b_{ij})\in \R^{n \times n}$ of $\Sigma_n$ 
(i.e., $B^2=\Sigma_n$) has entries
\begin{align*} 
b_{ij} = \frac{\prod_{k=1}^na_k}{a_ia_j \sqrt{\sum_{i=1}^n\prod_{k \in [n] \setminus \{i\}}a_k^2}}.
\end{align*}
\end{enumerate}
\end{thm}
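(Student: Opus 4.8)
The plan is to exploit that, by (\ref{Eq: Sigma superkri}), the matrix $\Sigma_n$ is the rank-one outer product $\Sigma_n = w w^t$ with $w = (1/a_1, \dots, 1/a_n)^t$; equivalently $(\Sigma_n)_{ij} = 1/(a_i a_j)$. All three decompositions can then be verified by direct matrix multiplication, and since this is precisely the matrix structure appearing in \cite[Equation (9)]{ReitznerRoemer}, each factorization is obtained by substituting the general $a_i$ from (\ref{Eq: Abbr}) for the special case $a_l = \alpha_l + d$ used there. I would nonetheless present the self-contained verifications below.

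For (i) and (ii) I would observe that the proposed $U$ and $G^t$ are supported on their first row only, so that the products collapse to a single outer product. Concretely, for the LU factorization $u_{kj} = 0$ unless $k = 1$, whence $(LU)_{ij} = l_{i1} u_{1j} = (a_1/a_i)\cdot 1/(a_1 a_j) = 1/(a_i a_j)$, using that $l_{i1} = a_1/a_i$ for every $i$ (including $i=1$, where $l_{11} = 1 = a_1/a_1$). Likewise for the Cholesky factorization $g_{ik} = 0$ unless $k = 1$, so $(GG^t)_{ij} = g_{i1} g_{j1} = 1/(a_i a_j)$. In both cases the result equals $\Sigma_n$ entrywise, which is all that is required; I would only remark that, because $\Sigma_n$ is singular of rank one (Corollary \ref{Kor: Rang superkrit. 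Reg.}), these are understood in the generalized sense (no pivoting is performed, and $G$ has vanishing diagonal entries $g_{22}, \dots, g_{nn}$), so no uniqueness is claimed.

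For (iii) the key step is the algebraic simplification $\prod_{k \in [n]\setminus\{i\}} a_k^2 = (\prod_{k=1}^n a_k)^2 / a_i^2$, which turns $b = \sum_{k=1}^n \prod_{l \in [n]\setminus\{k\}} a_l^2$ from (\ref{Eq: Abbr}) into $b = (\prod_{k} a_k)^2 \sum_{i=1}^n 1/a_i^2$. Substituting this into the stated entries gives $b_{ij} = 1/(a_i a_j \sqrt{\sum_k 1/a_k^2})$, that is $B = \lambda_2^{-1/2}\, \Sigma_n$ with $\lambda_2 = \sum_{i=1}^n 1/a_i^2$ the nonzero eigenvalue from Theorem \ref{Thm: EW.e, ER.e superkrit. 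Reg.}. It then remains to compute $B^2$. Here I would use the rank-one identity $\Sigma_n^2 = (w^t w)\, w w^t = \lambda_2\, \Sigma_n$, so that $B^2 = \lambda_2^{-1}\, \Sigma_n^2 = \Sigma_n$, as claimed; symmetry and positive semidefiniteness of $B$ are inherited directly from $\Sigma_n$.

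The computations are entirely routine once the outer-product structure is isolated, so I do not expect a genuine analytic obstacle. The only points requiring care are the simplification of $b$ in (iii) and the recognition of the identity $\Sigma_n^2 = \lambda_2 \Sigma_n$, which is exactly what makes the scalar multiple $\lambda_2^{-1/2}\Sigma_n$ a square root of the singular matrix $\Sigma_n$.
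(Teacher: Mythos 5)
Your proposal is correct and follows essentially the same route as the paper: the paper's entire proof consists of observing that $\Sigma_n$ in the supercritical regime has exactly the matrix structure of \cite[Equation (9)]{ReitznerRoemer} and transferring the decompositions from there by replacing the special $a_l = \alpha_l + d$ with the general $a_i$ from (\ref{Eq: Abbr}), which is precisely your opening observation. Your explicit rank-one verifications --- the outer-product collapse for (i) and (ii), and for (iii) the simplification of $b$ to $\left(\prod_k a_k\right)^2 \sum_i 1/a_i^2$ together with the identity $\Sigma_n^2 = \lambda_2 \Sigma_n$ --- correctly spell out what that transfer amounts to, and your remark that the LU and Cholesky factorizations of the singular matrix $\Sigma_n$ are meant in the generalized, non-unique sense is a sound clarification the paper leaves implicit.
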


In the upcoming example we illustrate the previous mentioned decompositions of the covariance matrix $\Sigma_3$ of a vector $ (\tilde{\V}_{k_1}^{(\alpha_1)},\tilde{\V}_{k_2}^{(\alpha_2)},\tilde{\V}_{k_3}^{(\alpha_3)})$.

\begin{exa} \label{Bsp: Zerlegungen superkrit. Reg.}

Let $\Sigma_3$ be the covariance matrix according to (\ref{Eq: Kov.matrix Laengen Potenz Funktional}) for the volume power vector with the parameters $k_1 = 1, k_2 = 2, k_3 = 3$ and $\alpha_i=1$ for $i \in \{1,2,3\}$ as well as the dimension $d =3$.
 We abbreviate $\mu_{k_{i}}^{(\alpha_i)}$ as $\mu_{k_{i}}$. This leads to $a_1 = 1/\mu_1, a_2 =2/\mu_2$ and $a_3 = 6 / \mu_3 $. Thus, $\Sigma_3$ is given by
\[\Sigma_3 =
\left( \begin{array}{ccccc}
\mu_1^2& \mu_1 \mu_2/ 2 & \mu_1 \mu_3/ 6   \\
\mu_1 \mu_2/ 2  & \mu_2^2/4 & \mu_2 \mu_3/ 12 \\
\mu_1 \mu_3/ 6   & \mu_2 \mu_3/ 12  & \mu_3^2/36\\
\end{array}\right) .\]
According to Theorem \ref{Satz: LR-Zerleg. superkrit. Reg.}(i) an LU decomposition of $\Sigma_3$ is 
\[  LU = 
\left( \begin{array}{ccccc}
1& 0  & 0 \\
\mu_2/2 \mu_1 & 1 & 0 \\
\mu_3/6 \mu_1 & 0& 1\\
\end{array}\right) \left( \begin{array}{ccccc}
\mu_1^2& \mu_1\mu_2/2  & \mu_1\mu_3/6 \\
0 & 0 & 0 \\
0 & 0& 0 \\
\end{array}\right).\] 
Theorem \ref{Satz: LR-Zerleg. superkrit. Reg.}(ii) leads to the Cholesky decomposition $\Sigma_3= GG^t$, where
\[  GG^t=   
\left( \begin{array}{ccccc}
\mu_1& 0  & 0\\
\mu_2/2 & 0 & 0\\
\mu_3/6& 0 & 0\end{array}\right) \left( \begin{array}{ccccc}
\mu_1&\mu_2/2  & \mu_3/6\\
0 & 0 & 0 \\
0 & 0& 0\\
\end{array}\right).
\]
Theorem \ref{Satz: LR-Zerleg. superkrit. Reg.}(iii) yields $\Sigma_3 =  BB$ as a product of matrix roots. Here, we abbreviate $\prod_{i \in I}\mu_i = \mu_{I}$ and $\{i,j,k\} = i,j,k$ as well as $\{i,j\} = i,j$ and obtain

\begin{align*} B=
&\left( \begin{array}{ccccc}
\frac{\sqrt{72}\mu_1 ^2}{\sqrt{\mu_{1,2,3}(\mu_3
+3\mu_2+6\mu_1^2)}} & \frac{\sqrt{18}\mu_{1,2}}{\sqrt{\mu_{1,2,3}(\mu_3
+3\mu_2+6\mu_1)}} & \frac{\sqrt{2}\mu_{1,3}}{\sqrt{\mu_{1,2,3}(\mu_3
+3\mu_2+6\mu_1)}} \\
\frac{\sqrt{18}\mu_{1,2}}{\sqrt{\mu_{1,2,3}(\mu_3
+3\mu_2+6\mu_1)}}  & \frac{3\mu_2 ^2}{\sqrt{\mu_{1,2,3}(2\mu_3
+6\mu_2+12\mu_1)}} &\frac{\mu_{2,3}}{\sqrt{\mu_{1,2,3}(2\mu_3+6\mu_2+12\mu_1)}}\\
\frac{\sqrt{2}\mu_{1,3}}{\sqrt{\mu_{1,2,3}(\mu_3
+3\mu_2+6\mu_1)}} & \frac{\mu_{2,3}}{\sqrt{\mu_{1,2,3}(2\mu_3+6\mu_2+12\mu_1)}} & \frac{\frac{1}{3}\mu_3 ^2}{\sqrt{\mu_{1,2,3}(2\mu_3+6\mu_2+12\mu_1)}}   \\
\end{array}\right) .   
\end{align*}

\end{exa}

\subsection{Subcritical regime}

From now on we consider $\Sigma_n$ in the subcritical regime (i.e., $t\delta_t^d \rightarrow 0$) for $n\geq 2$. Recall, that according to (\ref{Eq: Kov.matrix Laengen Potenz Funktional}) this is the matrix

\[
 \Sigma_n
=A_0^{<1} = \Bigl(\mu_{k_j}^{(\alpha_l+ \alpha_j)} \frac{\mathds{1}(k_l =k_j)}{(k_j + 1)!} 
 \Bigr)\in  \R^{n \times n},
\]

which is a diagonal block matrix, where a diagonal block with size $i>1$ arises whenever there exists an $r$ such that $k_r = k_{r+1}= \ldots =k_{r+i-1}$. This block is a multiple of a generalized $i \times i$-moment matrix
\begin{align} \label{Eq: Ekr}
   E_{k_r} = (\mathbb{E}\Delta_{1,k_r} ^{(\alpha_l+\alpha_j)})_{l,j\in \{r , \ldots, r+i-1\}}, 
\end{align}

where we abbreviate $\Delta_{1,k_r}= \Delta_{1,k_r} [0, \{ X_l\}^{k_r}_{l=1}]$. Due to \cite[Theorem 2.3]{AkinwandeReitzner} each matrix $E_{k_r}$ has full rank. From this we can conclude, that $\Sigma_n $ has full rank as well. 
Based on this preliminary consideration, we state the following theorem, which also follows directly from  \cite[Corollary 3.6]{AkinwandeReitzner}. 

\begin{thm} \label{Thm: rank, det, definitness subcrit}
Let $\Sigma_n $ be as in (\ref{Eq: Kov.matrix Laengen Potenz Funktional}) in the subcritical regime for $n\geq2$. Then 
(i) $\emph{rank}(\Sigma_n) = n$;
(ii) $\Sigma_n$ is positive definite;
(iii) $\det \Sigma_n>0$.
\end{thm}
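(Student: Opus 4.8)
The plan is to exploit the block-diagonal structure of $\Sigma_n = A_0^{<1}$ that is already spelled out in the paragraph preceding the statement, and to reduce everything to properties of the individual blocks $E_{k_r}$. First I would make the block decomposition explicit: after grouping the admissible sequence by the value of $k_i$ (which is legitimate since the sequence is weakly increasing by admissibility condition (i)), the indicator $\mathds{1}(k_l = k_j)$ forces $\Sigma_n$ to be block-diagonal, with each diagonal block equal to $\tfrac{1}{(k_r+1)!}\,E_{k_r}$, where $E_{k_r}$ is the generalized moment matrix from \eqref{Eq: Ekr}. A size-$1$ block is just the positive scalar $\tfrac{1}{(k_r+1)!}\mathbb{E}\Delta_{1,k_r}^{(2\alpha_r)}$, which is strictly positive because $\Delta_{1,k_r}$ is a nonnegative random variable that is positive with positive probability (its even power has positive expectation), so the scalar case needs no separate argument.

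Next I would establish positive definiteness of each block $E_{k_r}$, which is the heart of the matter. The key observation is that $E_{k_r}$ is a Gram-type matrix: its $(l,j)$ entry is $\mathbb{E}\bigl[\Delta_{1,k_r}^{\alpha_l}\,\Delta_{1,k_r}^{\alpha_j}\bigr]$, i.e.\ the $L^2$-inner product of the random variables $\Delta_{1,k_r}^{\alpha_l}$ and $\Delta_{1,k_r}^{\alpha_j}$ on the underlying probability space. For any nonzero real vector $c = (c_r,\dots,c_{r+i-1})$ one then computes
\[
c^t E_{k_r} c = \mathbb{E}\Bigl[\Bigl(\sum_{l} c_l\,\Delta_{1,k_r}^{\alpha_l}\Bigr)^{2}\Bigr] \geq 0,
\]
so $E_{k_r}$ is automatically positive semidefinite. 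To upgrade to strict positivity I would argue that equality forces $\sum_l c_l\,\Delta_{1,k_r}^{\alpha_l} = 0$ almost surely, and since within a block the pairs are distinct (admissibility condition (ii)) while the $k_r$ agree, the exponents $\alpha_l$ are pairwise distinct; the functions $t \mapsto t^{\alpha_l}$ are linearly independent on any set of positive volume, so this would force $c = 0$. Rather than redoing this, though, I would invoke \cite[Theorem 2.3]{AkinwandeReitzner}, which (as noted in the text) already yields that each $E_{k_r}$ has full rank, and combine full rank with the semidefinite Gram identity above to conclude each block is genuinely positive definite.

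With positive definiteness of every diagonal block in hand, all three claims follow from standard facts about block-diagonal matrices. For (ii), a block-diagonal matrix is positive definite if and only if each block is, so $\Sigma_n \succ 0$ (the factors $\tfrac{1}{(k_r+1)!} > 0$ preserve definiteness). For (i), positive definiteness immediately gives $\rank(\Sigma_n) = n$; alternatively, since $\rank$ is additive over blocks and each $E_{k_r}$ has full rank, the ranks sum to $n$. For (iii), $\det\Sigma_n$ equals the product of the block determinants, each of which is strictly positive by (ii), hence $\det\Sigma_n > 0$.

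The main obstacle is the strict positivity of the blocks $E_{k_r}$, i.e.\ ruling out a nontrivial linear relation $\sum_l c_l\,\Delta_{1,k_r}^{\alpha_l} = 0$ almost surely; everything else is formal bookkeeping about block-diagonal matrices. Since the excerpt explicitly grants that \cite[Theorem 2.3]{AkinwandeReitzner} supplies full rank of each $E_{k_r}$ and that \cite[Corollary 3.6]{AkinwandeReitzner} already contains the statement, the cleanest route is to cite those results for the blocks and to spend the written proof assembling the block-diagonal consequences; I would nonetheless include the Gram-matrix identity, because it is what turns ``full rank'' into the stronger ``positive definite'' needed for (ii) and for the sign in (iii).
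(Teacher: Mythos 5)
Your proposal is correct and takes essentially the same route as the paper: exploit the block-diagonal structure of $\Sigma_n = A_0^{<1}$ with blocks that are multiples of the moment matrices $E_{k_r}$, cite \cite[Theorem 2.3]{AkinwandeReitzner} for full rank of each block, and note that the whole statement also follows from \cite[Corollary 3.6]{AkinwandeReitzner}. Your Gram-matrix identity $c^t E_{k_r} c = \mathbb{E}\bigl[\bigl(\sum_l c_l \Delta_{1,k_r}^{\alpha_l}\bigr)^2\bigr] \geq 0$, which bridges full rank to positive definiteness without leaning on Corollary 3.6, is a sound self-contained supplement but does not change the approach.
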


Now we look at special cases of the considered admissible sequence to get for these cases further results regarding the invariants and decompositions of interest of $\Sigma_n$ in the subcritical regime.

\begin{defi}
Let $(k_1, \alpha_1), \ldots,(k_1, \alpha_{l_1}), \ldots,  (k_r, \alpha_{\sum_{h=1}^{r-1} l_h}), \ldots,( k_r, \alpha_n)$ be an admissible sequence for $r \leq n$ such that $k_i \neq k_j$ for $i \neq j \in [r]$ and $l_1, \ldots, l_r \in \mathbb{N}$ such that $\sum _{i=1}^rl_i= n$. Furthermore, let $ s_1, \ldots, s_r \in \mathbb{N}$ be such that the following condition is satisfied: 
\[ | \alpha_j-\alpha_{j+1}|= s_{i} \, \text{ for } \, i \in [r]\, \text{ and }\, j \in \{\sum_{h= 1 }^{i-1}l_{h}+1, \ldots, \sum_{h= 1 }^{i}l_{h}-1\}. \] 
The vector \[(\tilde{\V}_{k_1} ^{(\alpha_{1})}, \ldots, \tilde{\V}_{k_1} ^{(\alpha_{l_1})}, \tilde{\V}_{k_2} ^{(\alpha_{l_1+1})},\ldots, \tilde{\V}_{k_2} ^{(\alpha_{l_1+l_2})}, \ldots,\tilde{\V}_{k_r} ^{(\alpha_{\sum_{i=1}^{r-1}l_i+1})}, \ldots , \tilde{\V}_{k_r} ^{(\alpha_{\sum_{i=1}^{r}l_i})})\] is called the \textit{vector of same distances}.
\end{defi}

For the vector of same distances the matrices $E_{k_i}$ are so-called \textit{Hankel matrices} (see, e.g., \cite{Widom}). Several interesting results have already been discovered for Hankel matrices, including their inverses, eigenvalues, and eigenvectors, which lead immediately to the next theorem. There exist strategies to compute determinants of Hankel matrices as well (see, e.g. \cite{Junod}) which will not be consider here in more detail, but might be interesting for future work.

\begin{thm} \label{Thm: eigenvalues subcritical}
Let $\Sigma_n $ be as in (\ref{Eq: Kov.matrix Laengen Potenz Funktional}) in the subcritical regime for $n\geq2$ with respect to the vector \[(\tilde{\V}_{k_1} ^{(\alpha_{1})}, \ldots, \tilde{\V}_{k_1} ^{(\alpha_{l_1})}, \tilde{\V}_{k_2} ^{(\alpha_{l_1+1})},\ldots, \tilde{\V}_{k_2} ^{(\alpha_{l_1+l_2})}, \ldots,\tilde{\V}_{k_r} ^{(\alpha_{\sum_{i=1}^{r-1}l_i+1})}, \ldots , \tilde{\V}_{k_r} ^{(\alpha_{\sum_{i=1}^{r}l_i})})\] of same distances and $E_{k_i}$ the associated block matrices from (\ref{Eq: Ekr}). Then the following holds:

\begin{enumerate}
    \item  \begin{align*}
    \Sigma_n^{-1} =
\left( 
\begin{array}{cccccccc}
H_{k_1} & 0 &\ldots & 0\\
0 & \ddots& \ddots  & \vdots \\
\vdots & \ddots & \ddots & 0 \\
0 & \ldots &  0 & H_{k_r} 
\end{array}
\right) ,
\end{align*}

where $H_{k_i} = E_{k_i} ^{-1}$ are the inverse matrices  of the $E_{k_i}$ according to the algorithm from \cite[3]{Trench}.
    \item The eigenvalues of $\Sigma_n$ are $ \lambda_{k_1,1}, \ldots, \lambda_{k_1,l_1}, \ldots, \lambda_{k_r,1}, \ldots, \lambda_{k_r,l_r} $, where $\lambda_{k_i, j}$ for $j \in [l_i]$ are the eigenvalues of the Hankel matrices $E_{k_i}$ according to the algorithm from \cite[5]{Luk}.
\end{enumerate}

\end{thm}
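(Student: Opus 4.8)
The plan is to exploit the structure established immediately before the statement, namely that $\Sigma_n$ is a block-diagonal matrix whose diagonal blocks are (scalar multiples of) the moment matrices $E_{k_i}$ from \eqref{Eq: Ekr}, and that for the vector of same distances these blocks are Hankel matrices. Since both claimed assertions concern the inverse and the eigenvalues, and both of these invariants respect block-diagonal structure, the whole proof reduces to a blockwise argument together with citations to the Hankel-matrix literature.

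First I would invoke the preliminary discussion: $\Sigma_n$ decomposes into diagonal blocks, one block of size $l_i$ for each distinct value $k_i$, and the block corresponding to $k_i$ equals $\frac{1}{(k_i+1)!} E_{k_i}$ up to the normalization recorded in \eqref{Eq: Ekr}. By Theorem~\ref{Thm: rank, det, definitness subcrit} each $E_{k_i}$ is invertible (indeed positive definite), so the inverse of a block-diagonal matrix is the block-diagonal matrix of the inverses. This gives (i) directly: $\Sigma_n^{-1} = \operatorname{diag}(H_{k_1}, \ldots, H_{k_r})$ with $H_{k_i} = E_{k_i}^{-1}$. For the vector of same distances the off-diagonal structure of each block is constant along anti-diagonals — this is exactly the defining condition $|\alpha_j - \alpha_{j+1}| = s_i$, which forces the $(l,j)$-entry $\mathbb{E}\Delta_{1,k_i}^{(\alpha_l + \alpha_j)}$ to depend only on $\alpha_l + \alpha_j$ and hence only on $l+j$ — so each $E_{k_i}$ is a Hankel matrix, and its explicit inverse is supplied by Trench's algorithm \cite{Trench}. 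For (ii), I would again use that the spectrum of a block-diagonal matrix is the union of the spectra of its blocks, so the eigenvalues of $\Sigma_n$ are precisely the collected eigenvalues $\lambda_{k_i,j}$ of the Hankel blocks $E_{k_i}$, which can be computed by the algorithm of \cite{Luk}.

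The one genuine point requiring care — and the step I expect to be the main obstacle — is verifying that the same-distances condition really produces the Hankel property. One must check that the index shift is correct: the entry in position $(l,j)$ of $E_{k_i}$ is a function of $\alpha_l + \alpha_j$, and the hypothesis guarantees that consecutive $\alpha$'s within a block differ by a fixed step $s_i$, so that $\alpha_l + \alpha_j$ is an affine function of $l+j$ and the entries are constant on anti-diagonals. I would spell out this indexing carefully, since the normalization by $(k_i+1)!$ (a constant on each block) does not affect the Hankel property but must be tracked so that the stated inverse $H_{k_i}$ matches the normalized block rather than $E_{k_i}$ itself. Once the Hankel identification is made rigorous, both (i) and (ii) follow by quoting the cited results for inverses and eigenvalues of Hankel matrices, and no further computation is needed.
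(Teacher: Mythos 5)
Your proposal is correct and follows essentially the same route as the paper's own proof: both reduce (i) and (ii) to the block-diagonal structure of $\Sigma_n$ and then quote Trench \cite{Trench} for inverses and Luk--Qiao \cite{Luk} for eigenvalues of the Hankel blocks. Your additional verification that the same-distances condition (together with distinctness of the pairs, which forces the $\alpha$'s in each block to form an arithmetic progression) yields the Hankel property, and your tracking of the $(k_i+1)!$ normalization relating each block to $E_{k_i}$, are points the paper's two-line proof passes over silently, so they strengthen rather than change the argument.
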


\begin{proof}
    Both statements follow directly from $\Sigma_n$ being a diagonal block matrix. The inverse matrix of a Hankel matrix $E_{k_i}$, where $l_i=n$, can be determined by the algorithm from \cite{Trench} in $O(n^2)$. Additionally there exists an algorithm in \cite{Luk} that computes the explicit eigenvalues of a Hankel matrix, which is performed in $O(n^2\log n)$. 
\end{proof} 
\begin{exa}

Let $\Sigma_5$ be the asymptotic covariance matrix for an admissible sequence with $d=3$, $k_1=1, k_2=2, k_3 = 3$ and $l_1 = 2, l_2 = 2, l_3 = 1$ and with the powers $ \alpha_1 = 1, \alpha_2 = 3, \alpha_ 3 =1, \alpha_4 = 5, \alpha_5 = 4$. This leads to the covariance matrix
\begin{align*}
\label{Eq: Kov.matrix Laengen Potenz Funktional supercritical}
\Sigma_5
&=
\left( 
\begin{array}{cccccc}
\mathbb{E}\Delta_{1,k_1} ^{(2)} & \mathbb{E}\Delta_{1,k_1} ^{(4)} & 0 & 0&0 \\
\mathbb{E}\Delta_{1,k_1} ^{(4)}  & \mathbb{E}\Delta_{1,k_1} ^{(6)}  &0  &0&0 \\
0& 0 & \mathbb{E}\Delta_{1,k_2} ^{(2)} & \mathbb{E}\Delta_{1,k_2} ^{(6)} & 0\\
0& 0& \mathbb{E}\Delta_{1,k_2} ^{(6)}  & \mathbb{E}\Delta_{1,k_2} ^{(10)} & 0\\
0 & 0 &0& 0& \mathbb{E}\Delta_{1,k_3} ^{(8)}  
\end{array}
\right) .
\end{align*}
The diagonal block matrices in this example are Hankel matrices. The inverse $\Sigma_5^{-1}$ of $\Sigma_5$ is given by

\begin{align*}
\left( 
\begin{array}{cccccc}
S_1 &0 &0 \\
0 & S_2 & 0 \\
0 & 0& S_3
\end{array}
\right),
\end{align*}
where 
\begin{align*}
    S_1 &= \left( 
\begin{array}{cccccc}
\frac{\mathbb{E}\Delta_{1,k_1} ^{(6)} }{\mathbb{E}\Delta_{1,k_1} ^{(2)} \mathbb{E}\Delta_{1,k_1} ^{(6)}- (\mathbb{E}\Delta_{1,k_1} ^{(4)})^2  } & \frac{-\mathbb{E}\Delta_{1,k_1} ^{(4)} }{\mathbb{E}\Delta_{1,k_1} ^{(2)} \mathbb{E}\Delta_{1,k_1} ^{(6)}- (\mathbb{E}\Delta_{1,k_1} ^{(4)})^2  }  \\ \frac{-\mathbb{E}\Delta_{1,k_1} ^{(4)} }{  \mathbb{E}\Delta_{1,k_1} ^{(2)} \mathbb{E}\Delta_{1,k_1} ^{(6)} - (\mathbb{E}\Delta_{1,k_1} ^{(4)})^2 } &
\frac{-\mathbb{E}\Delta_{1,k_1} ^{(2)} }{\mathbb{E}\Delta_{1,k_1} ^{(2)} \mathbb{E}\Delta_{1,k_1} ^{(6)}- (\mathbb{E}\Delta_{1,k_1} ^{(4)})^2 } & .\end{array}\right), \\
    S_2 & = \left( 
\begin{array}{cccccc}\frac{\mathbb{E}\Delta_{1,k_2} ^{(10)} }{\mathbb{E}\Delta_{1,k_2} ^{(2)} \mathbb{E}\Delta_{1,k_2} ^{(10)}- (\mathbb{E}\Delta_{1,k_2} ^{(6)})^2  }  & \frac{-\mathbb{E}\Delta_{1,k_2} ^{(6)} }{\mathbb{E}\Delta_{1,k_2} ^{(2)} \mathbb{E}\Delta_{1,k_2} ^{(10)}- (\mathbb{E}\Delta_{1,k_2} ^{(6)})^2  } \\
 \frac{-\mathbb{E}\Delta_{1,k_2} ^{(6)} }{\mathbb{E}\Delta_{1,k_2} ^{(2)} \mathbb{E}\Delta_{1,k_2} ^{(10)} - (\mathbb{E}\Delta_{1,k_2} ^{(6)})^2 }  & \frac{\mathbb{E}\Delta_{1,k_2} ^{(2)} }{\mathbb{E}\Delta_{1,k_2} ^{(2)} \mathbb{E}\Delta_{1,k_2} ^{(10)}- (\mathbb{E}\Delta_{1,k_2} ^{(6)})^2  } \end{array}\right), \\
    S_3 &=  \frac{1}{\mathbb{E}\Delta_{2,k_3} ^{(8)} }.
\end{align*}

\end{exa}

Moreover, it is worth mentioning the following interesting and manageable admissible sequence where all $k_i$ are distinct. The considered matrix $\Sigma_n$ is diagonal in this case.

\begin{defi}\label{Def: vector distinct ki} Let $(k_1, \alpha_1), \ldots ,(k_n, \alpha_n)$ be an admissible sequence with $k_i \neq k_j$ for $i \neq j$ and $ i, j \in [n]$. The vector $(\tilde{\V}_{k_1}^{(\alpha_1)},  \ldots,\tilde{\V}_{k_n}^{(\alpha_n)})$ is called the \textit{vector of distinct} $k_i$\textit{-simplices}.
\end{defi}
Due to the easy structure of $\Sigma_n$ as a diagonal matrix for the special case of the vector of distinct $k_i$-simplices, we can conclude immediately the following results.  \\

\begin{thm} \label{Thm: eigenvalues distinct k_i}
Let $\Sigma_n $ be as in (\ref{Eq: Kov.matrix Laengen Potenz Funktional}) in the subcritical regime for $n\geq2$ with respect to the vector of distinct $k_i$-simplices $(\tilde{\V}_{k_1}^{(\alpha_1)},  \ldots,\tilde{\V}_{k_n}^{(\alpha_n)})$. Then the following statements hold:
\begin{enumerate}
    \item The determinant of $\Sigma_n$ is \[\det \Sigma_n=\prod_{i=1}^n \frac{\mu_{k_i}^{(2\alpha_i)}}{ (k_i + 1)!}  
.\]
    \item The eigenvalues of $\Sigma_n$ are its diagonal entries 
\[\lambda_i = \frac{\mu_{k_i}^{(2\alpha_i)}}{ (k_i + 1)!}.\]

    \item The eigenspaces of $\Sigma_n$ are given by \[\text{eig}(\lambda_i; \Sigma_n) = e_i, \]
    where $e_i$ denotes the $i$-th $n$-dimensional unit vector.
\end{enumerate}
\end{thm}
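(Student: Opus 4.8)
The plan is to exploit the fact that, for the vector of distinct $k_i$-simplices, the covariance matrix $\Sigma_n$ is diagonal and then read off all three statements from the general structure of diagonal matrices. The starting point is the observation made just before Theorem \ref{Thm: rank, det, definitness subcrit}: the off-diagonal indicator $\mathds{1}(k_l = k_j)$ in the defining formula (\ref{Eq: Am>1}) vanishes whenever $l \neq j$, precisely because the sequence is assumed to have $k_i \neq k_j$ for all $i \neq j$. Hence every block $E_{k_r}$ has size $1$, and $\Sigma_n$ is the diagonal matrix whose $i$-th diagonal entry is the single term $\mu_{k_i}^{(2\alpha_i)}/(k_i+1)!$ obtained by setting $l = j = i$ and $\alpha_l + \alpha_j = 2\alpha_i$ in (\ref{Eq: Am>1}). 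So the first step is simply to record this explicit diagonal form.

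Once $\Sigma_n = \operatorname{diag}\bigl(\mu_{k_1}^{(2\alpha_1)}/(k_1+1)!,\dots,\mu_{k_n}^{(2\alpha_n)}/(k_n+1)!\bigr)$ is established, each of the three claims is immediate from elementary linear algebra. For (i), the determinant of a diagonal matrix is the product of its diagonal entries, which gives exactly $\prod_{i=1}^n \mu_{k_i}^{(2\alpha_i)}/(k_i+1)!$. For (ii), the eigenvalues of a diagonal matrix are its diagonal entries, yielding $\lambda_i = \mu_{k_i}^{(2\alpha_i)}/(k_i+1)!$. For (iii), applying $\Sigma_n$ to the standard basis vector $e_i$ scales it by $\lambda_i$, so $e_i$ spans the eigenspace for $\lambda_i$; here one may invoke Theorem \ref{Thm: rank, det, definitness subcrit}(ii), positive definiteness, to guarantee every diagonal entry is strictly positive and hence that the eigenvalues are genuine (and the determinant positive, consistent with (i)).

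I do not expect a genuine obstacle in this proof, since diagonality reduces everything to definitions. The only point requiring slight care is the bookkeeping in the first step: one must verify that the $A_0^{<1}$ entry with $l = j = i$ really simplifies to $\mu_{k_i}^{(2\alpha_i)}/(k_i+1)!$. Reading (\ref{Eq: Am>1}) with $m = 0$ and $k_l = k_j = k_i$, the index collapses to $\frac{k_l+k_j-m+2}{2} = k_i + 1$, the indicator $\mathds{1}(0 \in \{0,2,\dots\})$ is satisfied, and the factorials in the denominator become $(k_i+1)!\,\cdot 0!\cdot 0! = (k_i+1)!$, matching the diagonal block description $\mu_{k_j}^{(\alpha_l+\alpha_j)}\mathds{1}(k_l=k_j)/(k_j+1)!$ given in the text. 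After confirming this reduction, statements (i)–(iii) follow with no further computation.
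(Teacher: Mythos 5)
Your proposal is correct and follows essentially the same route as the paper: the paper's own proof consists of noting that for distinct $k_i$ the indicator $\mathds{1}(k_l=k_j)$ kills all off-diagonal entries of $A_0^{<1}$, so $\Sigma_n$ is diagonal with entries $\mu_{k_i}^{(2\alpha_i)}/(k_i+1)!$, and all three claims are immediate from that form. Your version simply makes explicit the bookkeeping (collapse of the index to $k_i+1$ and of the factorials to $(k_i+1)!$) that the paper leaves implicit.
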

\begin{proof}
  The results arise directly from the straightforward form of $\Sigma_n$.
\end{proof}
Furthermore, we can state several decompositions of interest in the upcoming theorem.

\begin{thm} \label{thm: compositions subcritical}
Let $\Sigma_n $ be as in (\ref{Eq: Kov.matrix Laengen Potenz Funktional}) in the subcritical regime for $n\geq2$ with respect to the vector of distinct $k_i$-simplices $(\tilde{\V}_{k_1}^{(\alpha_1)},  \ldots,\tilde{\V}_{k_n}^{(\alpha_n)})$. Under these assumptions, the following statements hold: 
\begin{enumerate}
    \item $(\Sigma_n)^{1/2}\cdot (\Sigma_n)^{1/2}$ is a Cholesky decomposition of $\Sigma_n$. Additionally, $(\Sigma_n)^{1/2}$ is a matrix root of $\Sigma_n$.
    \item $I_n \cdot \Sigma_n$ is an LU decomposition of $\Sigma_n$, where $I_n$ denotes the $n \times n$- unit matrix.
\end{enumerate}

\end{thm}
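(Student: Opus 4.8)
The plan is to exploit the fact that, for the vector of distinct $k_i$-simplices, the covariance matrix $\Sigma_n$ is diagonal with strictly positive entries. Indeed, by (\ref{Eq: Am>1}) the off-diagonal entries carry the indicator $\mathds{1}(k_l = k_j)$, which vanishes whenever $l \neq j$ since all the $k_i$ are distinct; hence $\Sigma_n = \mathrm{diag}\bigl(\tfrac{\mu_{k_1}^{(2\alpha_1)}}{(k_1+1)!}, \ldots, \tfrac{\mu_{k_n}^{(2\alpha_n)}}{(k_n+1)!}\bigr)$, and each diagonal entry is positive by Theorem \ref{Thm: rank, det, definitness subcrit}(ii). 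With this observation in hand, both decompositions reduce to standard facts about diagonal positive definite matrices.

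For part (i), I would first record that for a diagonal matrix with positive entries the symmetric positive-definite square root $(\Sigma_n)^{1/2}$ is itself the diagonal matrix whose entries are the positive square roots of the $\lambda_i$ from Theorem \ref{Thm: eigenvalues distinct k_i}(ii). Since $(\Sigma_n)^{1/2}$ is diagonal, it equals its own transpose, so the product $(\Sigma_n)^{1/2}\cdot\bigl((\Sigma_n)^{1/2}\bigr)^t = (\Sigma_n)^{1/2}\cdot(\Sigma_n)^{1/2} = \Sigma_n$ is simultaneously a Cholesky factorization (because $(\Sigma_n)^{1/2}$ is, trivially, lower triangular with positive diagonal) and a matrix-root factorization (because $\bigl((\Sigma_n)^{1/2}\bigr)^2 = \Sigma_n$). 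The only point requiring a word of justification is the uniqueness clause implicit in calling it \emph{the} Cholesky factor: a positive definite matrix has a unique lower-triangular Cholesky factor with positive diagonal, and since our diagonal root manifestly has this form, it must coincide with the Cholesky factor.

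For part (ii), I would simply observe that $\Sigma_n$ is already lower triangular (indeed diagonal), so the trivial factorization $\Sigma_n = I_n \cdot \Sigma_n$ exhibits a unit lower-triangular factor $L = I_n$ together with an upper-triangular factor $U = \Sigma_n$, which is precisely the defining shape of an LU decomposition. No pivoting or elimination is needed because there are no off-diagonal entries to clear.

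I do not anticipate a genuine obstacle here; the content of the theorem is that diagonality collapses the usual decomposition algorithms to the identity. The one subtlety worth flagging is purely bookkeeping: one must make sure that \emph{positive definiteness} (Theorem \ref{Thm: rank, det, definitness subcrit}(ii)), rather than mere positive semidefiniteness, is invoked, since the Cholesky decomposition with strictly positive diagonal and the existence of a real square root both rely on the diagonal entries $\tfrac{\mu_{k_i}^{(2\alpha_i)}}{(k_i+1)!}$ being strictly positive. This is guaranteed by the admissibility conditions on the sequence $(k_i,\alpha_i)$.
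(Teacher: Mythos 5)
Your proposal is correct and follows essentially the same route as the paper: the paper's own proof is the one-line observation that both decompositions ``emerge directly from the straightforward expression for $\Sigma_n$'' as a diagonal matrix with positive entries, which is exactly the diagonality argument you spell out (your version just makes the implicit details explicit, including the correct appeal to positive definiteness from Theorem \ref{Thm: rank, det, definitness subcrit}).
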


\begin{proof}
As before, the results emerge directly from the straightforward expression for $\Sigma_n$.
\end{proof}

\begin{rem}
Additionally, there are several other simple cases that we can characterize.
   \begin{enumerate}
       \item The  case of an admissible sequence, where all powers $\alpha_i$ are equal is easy to deal with. It is a special case of the vector of distinct $k_i$-simplices, since in an admissible sequence all pairs $(k_i, \alpha_i)$ have to be distinct. This is why we can apply Theorems \ref{Thm: eigenvalues distinct k_i} and \ref{thm: compositions subcritical} here.
       \item Furthermore, there is the special case of an admissible sequence $(k_i, \alpha_i)$ where all $k_i$ are equal. Observe that for $n=2$ the considered matrices are Hankel  as in the special case of the vector of same distances. That is why the results above from Theorem \ref{Thm: eigenvalues subcritical} are here also valid. 
   \end{enumerate}
   
\end{rem}

\subsection{Critical regime} \label{Kap: Krit. Reg.}

From this point onward, $\Sigma_n$ is considered in the critical regime (i.e., $t\delta_t^d \rightarrow c \in (0, \infty)$) for $n\geq 2$. Recall, that in this regime one has to distinguish the following cases:

\begin{align} \label{Eq: Sigma krit. Regime}
  \Sigma_n
= \left\{
\begin{array}{ll}
\sum_{m=0}^{2k_n} A_m^{<1} c^{m/2}&\textit{ for }\, c  \in (0,1], \\
\sum_{m=0}^{k_n} A_m^{>1} c^{-m}& \textit{ for }\,c  \in [1,\infty). \\
\end{array}
\right.  
\end{align}

Some properties and invariants of $\Sigma_n$ have already been investigated in \cite{AkinwandeReitzner}.
In particular, \cite[Corollary 3.6]{AkinwandeReitzner} yields the next theorem. 

\begin{thm} \label{Thm: rank, det, definitness crit}
Let $\Sigma_n $ be as in (\ref{Eq: Kov.matrix Laengen Potenz Funktional}) in the critical regime for $n\geq2$. Except for finitely many values of $c$ the following statements hold:
(i) $\emph{rank}(\Sigma_n) = n$;
(ii) $\Sigma_n$ is positive definite;
(iii) $\det \Sigma_n>0$.
\end{thm}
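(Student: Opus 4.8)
The plan is to exploit the fact that $\Sigma_n$ arises as a limit of genuine covariance matrices, so that it is positive semidefinite for every admissible value of $c$. Consequently $\det\Sigma_n(c)\geq 0$ throughout, and all three assertions (i)--(iii) collapse to a single one: that $\Sigma_n$ is nonsingular. Indeed, a positive semidefinite matrix has full rank if and only if it is positive definite if and only if its determinant is strictly positive. Thus the task reduces to showing that $\det\Sigma_n(c)>0$ for all but finitely many $c$.

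Next I would read off the dependence on $c$ from (\ref{Eq: Sigma krit. Regime}). On the interval $(0,1]$ each entry of $\Sigma_n$ is a polynomial in $\sqrt c$, so $\det\Sigma_n$ is itself a polynomial in $s=\sqrt c$; on $[1,\infty)$ each entry is a polynomial in $1/c$, so $\det\Sigma_n$ is a polynomial in $u=1/c$. A polynomial that does not vanish identically has only finitely many roots, and by the previous paragraph it is nonnegative here. Hence it suffices to prove, on each of the two intervals separately, that $\det\Sigma_n$ is not the zero function; positivity off a finite set then follows. For the branch $c\in(0,1]$ I would pass to the limit $c\to 0^+$: by (\ref{Eq: Sigma krit. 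Regime}) the matrix $\sum_{m=0}^{2k_n}A_m^{<1}c^{m/2}$ tends to $A_0^{<1}$, which is precisely the subcritical covariance matrix. By Theorem \ref{Thm: rank, det, definitness subcrit} this limit is positive definite, so $\det\Sigma_n(c)\to\det A_0^{<1}>0$. The constant term of the determinant polynomial in $s$ is therefore strictly positive, the polynomial is nonzero, and only finitely many exceptional $c$ occur in $(0,1]$.

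The main obstacle is the branch $c\in[1,\infty)$, where the convenient anchor is lost: as $c\to\infty$ the matrix $\sum_{m=0}^{k_n}A_m^{>1}c^{-m}$ degenerates to $A_0^{>1}$, which by Corollary \ref{Kor: Rang superkrit. Reg.} has rank one, so $\det\Sigma_n\to 0$ and the limit at $u=0$ supplies no nonvanishing coefficient. To handle this I would use continuity at $c=1$, where both formulas in (\ref{Eq: Sigma krit. Regime}) must describe the same matrix $\Sigma_n(1)$. Provided $\det\Sigma_n(1)>0$ --- equivalently, that $c=1$ is not one of the finitely many exceptional values already found on the first branch --- the determinant polynomial in $u$ takes a positive value at $u=1$ and is therefore not identically zero, yielding finitely many exceptional $c$ in $[1,\infty)$ as well.

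Should $c=1$ itself happen to be degenerate, the purely formal matching argument is not enough, and one must instead import structural information about the limiting object: the non-degeneracy of the limiting Gaussian vector for generic $c$ established in \cite[Corollary 3.6]{AkinwandeReitzner} guarantees directly that $\det\Sigma_n$ is a nontrivial polynomial on this branch. This is precisely the step that cannot be replaced by elementary continuity, since branch one is a priori compatible with $\det\Sigma_n\equiv 0$ for all $c>1$. Combining the two branches then shows that $\Sigma_n$ is positive definite, of full rank, and of positive determinant outside a finite set of values of $c$, which is the assertion of the theorem.
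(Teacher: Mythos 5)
Your proposal is correct; note, though, that the paper's own proof is a one-line citation --- Theorem \ref{Thm: rank, det, definitness crit} is imported directly from \cite[Corollary 3.6]{AkinwandeReitzner} --- so the comparison is really between your partly self-contained argument and that citation. What you do differently and genuinely gain: reducing (i)--(iii) to $\det\Sigma_n>0$ via positive semidefiniteness of the limiting covariance matrix, viewing $\det\Sigma_n$ as a polynomial in $\sqrt{c}$ on $(0,1]$ (resp.\ in $1/c$ on $[1,\infty)$), and anchoring the first branch at the subcritical matrix $A_0^{<1}$, which is positive definite by Theorem \ref{Thm: rank, det, definitness subcrit}, proves the claim on $(0,1]$ without any external input and identifies the exceptional values of $c$ as roots of an explicit nonzero polynomial; the matching of the two branches at $c=1$ that you invoke is precisely Lemma \ref{Lem: Identity} of the paper. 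You also correctly located the genuine obstruction: on $[1,\infty)$ the natural anchor $A_0^{>1}$ at $1/c=0$ has rank one by Corollary \ref{Kor: Rang superkrit. Reg.}, so elementary continuity cannot exclude $\det\Sigma_n\equiv 0$ there, and your fallback for the case $\det\Sigma_n(1)=0$ is the very citation \cite[Corollary 3.6]{AkinwandeReitzner} that constitutes the paper's entire proof. In short, your route buys an elementary, quantitative treatment of $c\in(0,1]$ (and of $[1,\infty)$ whenever $c=1$ is non-degenerate), while the paper's buys brevity; neither fully avoids the external result, but yours localizes the dependency to a single degenerate configuration.
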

Beyond these results, we will present an approach to determine bounds for the eigenvalues of $\Sigma_n$ under Requirement \ref{Asum 1}. Before proceeding, we highlight some key general facts, starting with a statement about the matrices $A_m^{<1}$ and $A_m^{>1}$. The fact that the matrix sums in the case distinction in (\ref{Eq: Sigma krit. Regime}) coincide when $c=1$ was already used in \cite{AkinwandeReitzner}. We make this precise in the following lemma.

\begin{lem} \label{Lem: Identity}
Let $A_m^{< 1 }$ and $ A_m^{>1 }$ for $m \in \{0, \ldots, 2k_n\}$ be the matrices according to (\ref{Eq: Am<1}) and (\ref{Eq: Am>1}). Then
\begin{align}
\label{Eq: Identity}
    \sum_{ m = 0}^{2k_n} A_m^{< 1 } =  \sum_{m= 0}^{k_n} A_m^{>1 }.
\end{align}
\end{lem}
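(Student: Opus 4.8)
The plan is to prove the identity entrywise. Since both sides of (\ref{Eq: Identity}) are $n \times n$ matrices, it suffices to fix a pair of indices $l, j \in \{1, \ldots, n\}$ and show that the $(l,j)$ entries of $\sum_{m=0}^{2k_n} A_m^{<1}$ and $\sum_{m=0}^{k_n} A_m^{>1}$ agree. Without loss of generality I would assume $k_l \leq k_j$, the opposite case being symmetric in $l$ and $j$; set $k := \min(k_l,k_j) = k_l$.

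First I would simplify the $>1$ side. In the $(l,j)$ entry of $\sum_{m} A_m^{>1}$ the indicator $\mathds{1}(m \leq k)$ restricts the sum to $m = 0, \ldots, k$, all of which lie in $\{0,\ldots,k_n\}$. Substituting $p = m+1$ then turns this entry into
\[
\sum_{p=1}^{k+1} \frac{\mu_{k_l,k_j:p}^{(\alpha_l,\alpha_j)}}{p!\,(k_l-p+1)!\,(k_j-p+1)!}.
\]

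Next I would simplify the $<1$ side. In the $(l,j)$ entry of $\sum_m A_m^{<1}$ the indicator forces $m - |k_l-k_j| = m - (k_j-k_l)$ to be an even integer in $\{0,2,\ldots,2k\}$, i.e. $m = (k_j-k_l) + 2s$ with $s = 0, \ldots, k$; one checks that the extreme value $m = (k_j-k_l)+2k = k_l+k_j$ is $\leq 2k_n$, so all surviving terms lie in the summation range. Plugging this parametrization in, the ``middle index'' of $\mu$ becomes $\frac{k_l+k_j-m+2}{2} = k_l - s + 1$, while the two factorial arguments become $\frac{m-k_l+k_j}{2} = (k_j-k_l)+s$ and $\frac{m+k_l-k_j}{2} = s$. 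Hence the entry equals
\[
\sum_{s=0}^{k} \frac{\mu_{k_l,k_j:\,k_l-s+1}^{(\alpha_l,\alpha_j)}}{(k_l-s+1)!\,\bigl((k_j-k_l)+s\bigr)!\,s!}.
\]
The change of variables $p = k_l - s + 1$ (so $s = k_l-p+1$, and $p$ runs from $1$ to $k_l+1 = k+1$) turns the three factorials into exactly $p!\,(k_j-p+1)!\,(k_l-p+1)!$ and identifies the $\mu$-index as $p$, reproducing verbatim the $>1$ expression above. The two entrywise sums therefore coincide, which proves (\ref{Eq: Identity}).

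The only delicate point is the bookkeeping: one must verify that for $m = (k_j-k_l)+2s$ all factorial arguments appearing on the $<1$ side are nonnegative integers, and that the index ranges $s \in \{0,\ldots,k\}$ and $p \in \{1,\ldots,k+1\}$ correspond correctly under the substitution, keeping careful track of the absolute value $|k_l-k_j|$ and of the assumption $k_l \leq k_j$. As a conceptual check, the identity is precisely the assertion that the two cases of $\Sigma_n$ in (\ref{Eq: Kov.matrix Laengen Potenz Funktional}) agree at $c=1$, where $c^{m/2} = c^{-m} = 1$; since the asymptotic covariance matrix is a single well-defined object and $c=1$ belongs to both closed intervals, the two representations must yield the same matrix, in agreement with the direct computation.
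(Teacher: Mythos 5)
Your proof is correct and follows essentially the same route as the paper: both arguments work entrywise and reindex the nonzero terms of the two sums via the substitution $m \mapsto k_l+k_j-m$ (your composite substitution $m=(k_j-k_l)+2s$, $p=k_l-s+1$ is exactly the paper's change of variables $h = 2\min_i k_i - (m-|k_l-k_j|)$ followed by halving, written with the WLOG $k_l\leq k_j$ made explicit). The bookkeeping you flag — that all surviving indices lie in the stated ranges and the factorial arguments are nonnegative integers — checks out, so there is no gap.
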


\begin{proof}
The entries $(j,l)$ of the sum $\sum_{ m= 0}^{2k_n} A_m^{<1 }$ on the left of Equation (\ref{Eq: Identity}) can be transformed by the substitution $h = 2\min_{i\in \{l,j\}} k_i- (m -|k_l-k_j|)$ into 
\begin{align*}
    &\sum_{m\in \{ 0,  \ldots, 2k_n\}} \mu_{k_l,k_j:h/2+1}^{(\alpha_1,\alpha_2)} \frac{\mathds{1}(h\in \{0,2,4, \ldots, 2 \min_{i\in \{l,j\}} k_i\})}{(h/2+1)!(k_l-h/2)!(k_j-h/2)!}
    \\
    & = \sum_{h\in \{ 0, 2, \ldots, 2k_n\}} \mu_{k_l,k_j:h/2+1}^{(\alpha_1,\alpha_2)} \frac{\mathds{1}(h\in \{0,2,4, \ldots, 2 \min_{i\in \{l,j\}} k_i\})}{(h/2+1)!(k_l-h/2)!(k_j-h/2)!} \\
    &= \sum_{k \in \{0, 2, \ldots, 2k_n\}} \mu_{k_l,k_j:k/2+1}^{(\alpha_1,\alpha_2)} \frac{\mathds{1}(k\leq 2\min_{i\in \{l,j\}} k_i)}{(k/2+1)!(k_l-k/2)!(k_j-k/2)!}.
\end{align*}

The first equality follows from condition $\mathds{1}(h\in \{0,2,4, \ldots, 2 \min_{i\in \{l,j\}} k_i\})$ 
in the nominator.  
The last row can be rewritten into the right side of Equation (\ref{Eq: Identity}) by using the substitution $2m = k$.

\end{proof}

 We abbreviate the sums in the preceding Lemma as
 \begin{align}
     \label{Eq: Matrix B}
     B:= \sum_{m=0}^{2k_n} A_m^{<1}  = \sum_{m=0}^{k_n} A_m^{>1}.
 \end{align}

Next, we go on with studying $A_{m}^{>1}$. 
These matrices are block matrices where only the blocks in the right lower corner are nonzero. This means, that there appears a huge number of zeroes in these matrices for big $m$.
In general $A_{m}^{>1}$ has the form

\begin{equation}
\label{Eq: Am Matrizen superkrit}
A_m^{>1}
= 
\left( 
\begin{array}{cccccc}
0 &0 \\
0 & C
\end{array} 
\right) \in \mathbb{R}^{n \times n}
\enspace \text{ with } \enspace C = \left( 
\begin{array}{cccccc}
 \frac{\mu_{k_m,k_m:m+1}^{(\alpha_m,\alpha_m)}}{a_{m}^2} & \ldots  & \frac{\mu_{k_n,k_m:m+1}^{(\alpha_m,\alpha_n)}}{a_m a_n}  \\
\vdots&  \ddots &\vdots \\
 \frac{\mu_{k_m,k_n:m+1}^{(\alpha_m,\alpha_n)}}{a_ma_n} & \ldots & \frac{\mu_{k_n,k_n:m+1}^{(\alpha_n,\alpha_n)}}{a_n^2}\\
\end{array}
\right),
\end{equation}
where $m\leq k_i \leq n$ and
\begin{align} \label{Eq: ai crit}
    a_{i,m} = \sqrt{(m+1)!}(k_i-m)!\, \text{ for }\,i \in \{m, \ldots, n \}.
\end{align}

For the upcoming considerations we will use the following bounds.
\begin{lem} \label{lem: estimation mu ki kj}
  The $\mu_{k_i,k_j:m+1}^{(\alpha_i,\alpha_j)}$ with $i,j \in \{m , \ldots, n\}$ from (\ref{Eq: mu ki kj}) are bounded above by
  \begin{align} \label{Eq: bound S}
  \nonumber \mu_{k_i,k_j:m+1}^{(\alpha_i,\alpha_j)} &\leq (\frac{d\kappa_d}{\alpha_i+d}) ^{k_i-m} (\frac{d\kappa_d}{\alpha_i+\alpha_j+d}) ^{m} (\frac{d\kappa_d}{\alpha_j+d}) ^{k_j-m}\\
  &\leq \max_{i, j \in \{m , \ldots, n\}}(\frac{d\kappa_d}{\min\{\alpha_i,\alpha_j, \alpha_i+\alpha_j\}+d)}) ^{k_i+k_j-m}:= S_m.
  \end{align}
  
\end{lem}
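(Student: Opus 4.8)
The plan is to reduce the whole estimate to one elementary integral over the ball together with a base-times-height bound for simplex volumes. First I would record the single-variable identity
\[
\int_{B^d}\|x\|^{\beta}\,dx \;=\; d\kappa_d\int_0^1 r^{\beta+d-1}\,dr \;=\; \frac{d\kappa_d}{\beta+d},
\]
using polar coordinates, where $\kappa_d=\mathrm{vol}(B^d)$ and $d\kappa_d$ is the surface area of the unit sphere. This is finite precisely when $\beta+d>0$, which by the admissibility conditions holds for each of $\beta\in\{\alpha_i,\alpha_j,\alpha_i+\alpha_j\}$; in particular all denominators appearing in the claim are positive. Next I would recall the base-times-height factorization of the simplex volume: writing $h_l=\mathrm{dist}(x_l,\mathrm{aff}(0,x_1,\dots,x_{l-1}))$, one has $\Delta_1[0,x_1,\dots,x_k]\le\frac{1}{k!}\prod_{l=1}^k h_l\le\prod_{l=1}^k\|x_l\|$, since the constrained volume is dominated by the full volume, $\frac{1}{k!}\le 1$, and $h_l\le\|x_l\|$ because the affine hull contains the origin. (When $k>d$ the convention $\Delta_1\equiv 1$ with $\alpha=0$ makes the corresponding factor trivially equal to $1$.)

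With these two facts in hand, I would split the $k_i+k_j-m$ integration variables of (\ref{Eq: mu ki kj}) with $m$ replaced by $m+1$ into three groups: the $k_i-m$ variables $x_1,\dots,x_{k_i-m}$ exclusive to the first simplex, the $m$ shared variables $x_{k_i-m+1},\dots,x_{k_i}$, and the $k_j-m$ variables $x_{k_i+1},\dots,x_{k_i+k_j-m}$ exclusive to the second simplex. Applying the volume bound to each of the two factors of the integrand and raising them to the powers $\alpha_i,\alpha_j$, the exclusive-to-first variables then carry $\|x\|^{\alpha_i}$, the shared ones carry $\|x\|^{\alpha_i+\alpha_j}$, and the exclusive-to-second ones carry $\|x\|^{\alpha_j}$. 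Fubini factorizes the multiple integral into a product of copies of the elementary integral above, yielding exactly the first claimed bound. For the second inequality I would use that $\beta\mapsto\frac{d\kappa_d}{\beta+d}$ is positive and decreasing on $(-d,\infty)$, so every factor is at most the one attached to the smallest exponent $\min\{\alpha_i,\alpha_j,\alpha_i+\alpha_j\}$; since there are $(k_i-m)+m+(k_j-m)=k_i+k_j-m$ positive factors, their product is bounded by that factor raised to the power $k_i+k_j-m$, and taking the maximum over $i,j$ produces $S_m$.

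The main obstacle is the single step "raise the volume bound to the power $\alpha$": the estimate $\Delta_1\le\prod_l\|x_l\|$ reverses under a negative exponent, whereas admissibility only forces $\alpha_i>-d+k_i-1$, which permits $\alpha_i<0$. For such exponents the per-variable reduction genuinely breaks down, since replacing a height $h_l$ (the distance to an $(l-1)$-flat through the origin) by $\|x_l\|$ and integrating can diverge even though $\frac{d\kappa_d}{\alpha+d}$ remains finite. Consequently the crude factorization argument above establishes the lemma cleanly only for nonnegative powers, which already covers the $f$- and $h$-vector applications (where $\alpha=0$). Extending it to negative $\alpha$ would require a finer integration that retains the distance-to-flat structure and controls the contribution of nearly degenerate simplices, rather than discarding the factor $\frac{1}{k!}$ and collapsing each height to the radius; I expect this to be the delicate part of a fully general proof.
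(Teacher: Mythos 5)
Your proposal is essentially the paper's own proof: the paper likewise bounds each simplex volume by the product of the distances $\|x_l\|$ of its vertices to the origin, passes to polar coordinates $x_l = r_l u_l$ with Jacobian $r_l^{d-1}$, and factorizes the resulting integral into copies of $\int_{B^d}\|x\|^{\beta}\,dx=\frac{d\kappa_d}{\beta+d}$, exactly as you do; the only difference is cosmetic, in that the paper computes the bound for $\mu_{k_i,k_j:m}$ and shifts $m\mapsto m+1$ at the end, while you work with $m+1$ directly. One further remark worth making: the obstacle you flag at the end --- that the inequality $\Delta_1\le\prod_l\|x_l\|$ reverses when raised to a negative power, while admissibility permits $\alpha_i<0$ --- is not a defect of your argument relative to the paper's, since the paper performs the identical step without comment; the gap you identify is present in the published proof as well, and your explicit discussion of it (and of the restriction to nonnegative powers under which the factorization is clean) is if anything more careful than the original.
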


\begin{proof}
    First notice that the volumes of all simplices in $\eta_t$ are bounded above by the product of the length of appearing edges in each simplex, which are themselves bounded by $1$, since all points lie in the $d$-dimensional unit-ball.
    Therefore we see with the transformation $x_l= r_lu_l$ with $r_l= \| x_l \|, u_l \in S^{d-1}$ for $l \in \{1, \ldots, k_i+k_j+1-m\}$, which has Jacobian $r^{d-1}$, that
    \begin{align*}
     \mu_{k_i,k_j:m}^{(\alpha_i,\alpha_j)} &=    \int_{(B^d)^{k_i+k_j+1-m}}\Delta_1[0,\{x_l\}^{k_i}_{l=1}]^{\alpha_1}\Delta_1[0,\{x_l\}^{k_i+k_j-m+1}_{l=k_i-m+2}]^{\alpha_2} \text{d}x_1 \ldots \text{d}x_{k_i+k_j+1-m}\\ 
     &\leq \int_{(S^{d-1})^{k_i+k_j+m-1}} \int_{0}^1... \int_{0}^1 (r_1 \ldots r_{k_i-m+1})^{\alpha_1}(r_{k_i-m+2}\ldots r_{k_i})^{\alpha_i+\alpha_j}\\
     &(r_{k_i-m+2} \ldots r_{k_j+k_i+1-m})^{\alpha_j} (r_1 \ldots r_{k_i+k_j+1-m})^{d-1} \text{d}r_1 \ldots \text{d}r_{k_i+k_j+1-m} \text{d}u_1 \ldots \text{d}u_{k_i+k_j+1-m} \\
     &\leq (\frac{d\kappa_d}{\alpha_i+d}) ^{k_i-m+1} (\frac{d\kappa_d}{\alpha_i+\alpha_j+d}) ^{m-1} (\frac{d\kappa_d}{\alpha_j+d}) ^{k_j-m+1}.
    \end{align*}

    One can bound the $\mu_{k_i,k_j:m}^{(\alpha_i,\alpha_j)}$ for all $i, j \in \{m , \ldots, n\}$ by considering the maximum of the appearing expressions in the first estimation and obtains the bound $S_m$ for $m+1$.
\end{proof}
In the upcoming part, we will make conclusions in the case that the following requirement is fulfilled.
For this we consider the Loewner-order \cite{Jarre}, which is defined through: Let $A, B$ be two real-valued symmetric matrices. Then
\[A \geq B \,\text{ if and only if } \,A-B \text{ is positive semi-definite}.\]

\begin{Req} \label{Asum 1}
To establish certain bounds, we impose the following constraints:
 \begin{equation}
\label{Eq: DB}
E_m \cdot D_m \geq A_m^{>1}, 
\end{equation}
where $D_m =\left( 
\begin{array}{cccccc}
0 &0 \\
0 & \tilde{C}
\end{array}
\right)\in \mathbb{R}^{n \times n} $ such that $ \tilde{C} = \left( 
\begin{array}{cccccc}
 \frac{1}{a_{m}^2} & \ldots  & \frac{1}{a_m a_n}  \\
\vdots&  \ddots &\vdots \\
 \frac{1}{a_ma_n} & \ldots & \frac{1}{a_{n}^2}\\
\end{array}
\right)$ and where $E_m$ is the $n \times n$-diagonal matrix such that all diagonal entries are equal to $S_m$ from (\ref{Eq: bound S}) and $A_m^{>1}$ is positive semidefinite.
\end{Req}

   The condition stated in Requirement \ref{Asum 1} may not be generally satisfied. Below, we investigate the case of $C, \tilde{C}$ being a $2 \times 2 $- matrices. Here, we can specify the constraint even more. We have $A_m^{>1}$
 with\[ C=\left( 
\begin{array}{cccccc}
 \frac{\mu_{k_m,k_m:m+1}^{(\alpha_m,\alpha_m)}}{a_{m}^2} & \frac{\mu_{k_m,k_n:m+1}^{(\alpha_m,\alpha_n)}}{a_m a_n}  \\
 \frac{\mu_{k_m,k_n:m+1}^{(\alpha_m,\alpha_n)}}{a_ma_n} & \frac{\mu_{k_n,k_n:m+1}^{(\alpha_m,\alpha_m)}}{a_n^2}\\
\end{array}
\right).\]
Now we have to check, whether  
\begin{align} \label{Eq: matrix difference}
   E_mD_m-A^{>1}_m= \left( 
\begin{array}{cccccc}
 0 & 0  \\
 0 & \bar{C}\\
\end{array}
\right)
 \text{ with } 
  \bar{C}=  \left( 
\begin{array}{cccccc}
 \frac{S_m-\mu_{k_m,k_m:m+1}^{(\alpha_m,\alpha_m)}}{a_{mm}^2} & \frac{S_m-\mu_{k_m,k_n:m+1}^{(\alpha_m,\alpha_n)}}{a_m a_n}  \\
 \frac{S_m-\mu_{k_m,k_n:m+1}^{(\alpha_m,\alpha_n)}}{a_ma_n} & \frac{S_m-\mu_{k_n,k_n:m+1}^{(\alpha_n,\alpha_n)}}{a_n^2}\\
\end{array}
\right)
\end{align}
is positive semidefinite. It suffices to examine $\bar{C}$. This matrix is positive definite, if the following condition is fulfilled:
\[\frac{S_m-\mu_{k_m,k_m:m+1}^{(\alpha_m,\alpha_m)}}{a_{m}^2} \cdot \frac{S_m-\mu_{k_n,k_n:m+1}^{(\alpha_n,\alpha_n)}}{a_n^2} \geq (\frac{S_m-\mu_{k_m,k_n:m+1}^{(\alpha_m,\alpha_n)}}{a_m a_n})^2, \]
which reduces to 
\[(S_m-\mu_{k_m,k_m:m+1}^{(\alpha_m,\alpha_m)})(S_m-\mu_{k_n,k_n:m+1}^{(\alpha_n,\alpha_n)})\geq (S_m-\mu_{k_m,k_n:m+1}^{(\alpha_m,\alpha_n)})^2 . \]
Furthermore, we need to check whether $A_m ^{> 1}$ is positive semidefinite. As implied by equation (\ref{Eq: Am Matrizen superkrit}), the analysis can be restricted to $C$.
If the matrix consists of only a single entry, it is immediately clear that this entry is positive, and therefore the matrix is positive semidefinite. If 
$C$ is a $2 \times 2$-matrix, this means that we can check by a similar way as above, if it's a positive semidefinite matrix (e.g., for $m=0$ we saw this case already in the supercritical regime).

Assuming that Requirement \ref{Asum 1} holds, we can continue to find bounds for the eigenvalues of $\Sigma_n$ in the critical regime. To proceed, the following observation for the matrices $E_m$ and $D_m$ is applied.

\begin{lem} \label{lem: matrix D}
Let $D_m, E_m \in \mathbb{R}^{n \times n}$ be the matrices according to (\ref{Eq: DB}). 
\begin{enumerate}
    \item The eigenvalues of $D_m$ are $\lambda_1 =0$ and $\lambda_2 =\sum_{i=m}^n 1/ a_{i,m}^2$, where $a_{i,m}$ is from (\ref{Eq: ai crit}).
    \item  The eigenvalues of $E$ are all equal to the bound $S_m$ from (\ref{Eq: bound S}).
\end{enumerate}

\end{lem}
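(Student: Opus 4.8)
The plan is to read off both spectra directly from the block structure of $D_m$ and $E_m$, exploiting the fact that the nonzero part of $D_m$ is a rank-one matrix of exactly the type already analyzed in the supercritical regime.

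First I would dispose of part (ii). By Requirement \ref{Asum 1} the matrix $E_m$ is the diagonal matrix all of whose diagonal entries equal the bound $S_m$ from (\ref{Eq: bound S}), that is, $E_m = S_m I_n$. Its characteristic polynomial is $(\lambda - S_m)^n$, so every eigenvalue equals $S_m$; this is immediate and requires no further work.

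For part (i), the key observation is that the lower-right block $\tilde C$ of $D_m$ has entries $1/(a_{i,m}\,a_{j,m})$ with $a_{i,m}$ from (\ref{Eq: ai crit}), and hence is precisely the matrix appearing in (\ref{Eq: Sigma superkri}) with the supercritical quantities $a_i$ of (\ref{Eq: Abbr}) replaced by the $a_{i,m}$. Writing $v = (1/a_{m,m}, \ldots, 1/a_{n,m})^t$ one has $\tilde C = v v^t$, a symmetric rank-one matrix, whose only nonzero eigenvalue is $v^t v = \sum_{i=m}^n 1/a_{i,m}^2$, the remaining eigenvalues being $0$. Equivalently, one may simply apply Theorem \ref{Thm: EW.e, ER.e superkrit. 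Reg.} to $\tilde C$, since that result yields exactly the eigenvalues $0$ and $\sum_{i=m}^n 1/a_{i,m}^2$ for a matrix of this shape.

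It then remains to pass from $\tilde C$ to the full matrix $D_m$. Since $D_m$ is block diagonal with an upper-left zero block and lower-right block $\tilde C$, its spectrum is the multiset union of the spectrum of the zero block (contributing only further zeros) and the spectrum of $\tilde C$. Hence the distinct eigenvalues of $D_m$ are $\lambda_1 = 0$ and $\lambda_2 = \sum_{i=m}^n 1/a_{i,m}^2$, as claimed. I do not anticipate any genuine obstacle here; the only point needing care is the bookkeeping of the block decomposition, namely confirming that bordering the rank-one block $\tilde C$ by a zero block adds only zero eigenvalues and leaves the single nonzero eigenvalue untouched, which is immediate from block-diagonality.
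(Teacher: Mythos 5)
Your proof is correct and takes essentially the same approach as the paper: the paper likewise gets part (ii) directly from the diagonal structure $E_m = S_m I_n$, and obtains part (i) by applying the supercritical-regime eigenvalue result (Theorem \ref{Thm: EW.e, ER.e superkrit. Reg.}) to the lower-right block $\tilde{C}$ and noting that the surrounding zero block only contributes zero eigenvalues. Your explicit rank-one factorization $\tilde{C} = vv^t$ merely makes that step self-contained.
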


\begin{proof}
(i) Similar as in Theorem \ref{Thm: EW.e, ER.e superkrit. Reg.} the results from \cite{ReitznerRoemer} can be used to compute the eigenvalues of the lower right nonzero block matrix $\tilde{C}$.  \\
(ii) This follows directly from the diagonal structure of $E_m$.
 
\end{proof}

For the final part of the investigation the \textit{spectral norm} is used, which is defined, for $A\in \mathbb{R}^{n\times n}$, as
\begin{align}\label{rem: spectral norm}
 \| A \|_2 = \sqrt{\lambda_{\max}}   ,
\end{align}
where $\lambda_{\max}$ is the largest eigenvalue of $A^tA$. If $A$ is symmetric, then $\| A \|_2$ equals the absolute value of the largest eigenvalue of $A$, since $A^tA = A^2 $. The spectral norm exhibits a number of important characteristics. 

\begin{lem} \label{lem: spectral norm}The following statements hold for the spectralnorm $\| \cdot \|_2$: \
  \begin{enumerate}
    \item $\| \cdot \|_2$ is compatible with the euclidean norm $\| \cdot \|_2$, i.e. \[\| Ax \|_2 \leq \| A \|_2 \| x \|_2 \text{ for } x \in \mathbb{R}^n.\] 
     \item $\| \cdot \|_2$ is submultiplicative.
     \item Let $A,B \in \mathbb{R}^{n \times n}$ be symmetric and $B$ be positive semidefinite. Then, $A \geq B$ implies that $\| A \|_2 \geq \| B \|_2$.
\end{enumerate}   
\end{lem}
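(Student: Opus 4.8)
The plan is to treat the three statements in order, each reducing to the variational (Rayleigh--Ritz) characterization of eigenvalues of symmetric matrices combined with the definition $\|A\|_2 = \sqrt{\lambda_{\max}(A^tA)}$ from (\ref{rem: spectral norm}). For (i) I would start from the identity $\|Ax\|_2^2 = (Ax)^t(Ax) = x^t A^t A x$. Since $A^tA$ is symmetric and positive semidefinite, Rayleigh--Ritz gives $x^t A^t A x \leq \lambda_{\max}(A^tA)\,\|x\|_2^2$ for every $x \in \R^n$. Substituting the definition of the spectral norm and taking square roots yields $\|Ax\|_2 \leq \|A\|_2\,\|x\|_2$, which is exactly the claimed compatibility.

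For (ii) I would first record the byproduct of (i) that the spectral norm coincides with the operator norm induced by the Euclidean norm: the inequality in (i) shows $\sup_{x \neq 0}\|Ax\|_2/\|x\|_2 \leq \|A\|_2$, and choosing $x$ to be a unit eigenvector of $A^tA$ for the eigenvalue $\lambda_{\max}(A^tA)$ makes $\|Ax\|_2^2 = \lambda_{\max}(A^tA) = \|A\|_2^2$, so the supremum is attained and equality holds. Submultiplicativity is then immediate: applying (i) twice gives $\|ABx\|_2 \leq \|A\|_2\,\|Bx\|_2 \leq \|A\|_2\,\|B\|_2\,\|x\|_2$ for all $x$, and dividing by $\|x\|_2$ and passing to the supremum produces $\|AB\|_2 \leq \|A\|_2\,\|B\|_2$.

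For (iii) I would exploit that $A$ and $B$ are symmetric, so their spectral norms equal their spectral radii, and that $B$ is positive semidefinite, so $\|B\|_2 = \lambda_{\max}(B)$. The Loewner relation $A \geq B$ means $x^t A x \geq x^t B x$ for all $x$, whence by Rayleigh--Ritz $\lambda_{\max}(A) = \max_{\|x\|_2=1} x^t A x \geq \max_{\|x\|_2=1} x^t B x = \lambda_{\max}(B)$. Combining this with $\|A\|_2 = \max_i|\lambda_i(A)| \geq \lambda_{\max}(A)$ gives $\|A\|_2 \geq \lambda_{\max}(A) \geq \lambda_{\max}(B) = \|B\|_2$, as desired. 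The only genuinely delicate point, and the one I would be most careful about, lies here: for a general symmetric $A$ the spectral norm is $\max_i|\lambda_i(A)|$ rather than $\lambda_{\max}(A)$, so the chain of inequalities must be routed through $\lambda_{\max}(A)$, and it is precisely the positive semidefiniteness of $B$ that lets us identify $\|B\|_2$ with $\lambda_{\max}(B)$ and thereby close the argument; dropping that hypothesis would break the conclusion.
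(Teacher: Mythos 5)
Your proposal is correct, and parts (i) and (ii) follow essentially the same path as the paper: the paper also establishes $\| A \|_2 = \max_{x \neq 0} \| Ax \|_2 / \| x \|_2$ (via an explicit orthonormal eigenbasis expansion of $\| Ax \|_2^2$, which is just Rayleigh--Ritz written out) and then derives submultiplicativity by applying compatibility twice.

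Part (iii) is where you take a genuinely different route. The paper writes $A = B + (A-B)$ as a sum of two positive semidefinite matrices and invokes the Courant--Fischer/Weyl-type perturbation bound $\lambda_j^U + \lambda_1^V \leq \lambda_j^{U+V} \leq \lambda_j^U + \lambda_n^V$ (cited from the earlier length-power paper) to conclude $\lambda_i^A \geq \lambda_i^B \geq 0$ for \emph{every} index $i$, and from this the norm inequality. You instead compare only the top eigenvalues directly from the Loewner definition: $x^t A x \geq x^t B x$ for all $x$ gives $\lambda_{\max}(A) \geq \lambda_{\max}(B)$ by Rayleigh--Ritz, and positive semidefiniteness of $B$ identifies $\lambda_{\max}(B)$ with $\| B \|_2$, while the generally valid bound $\| A \|_2 \geq \lambda_{\max}(A)$ closes the chain. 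Your argument is more elementary (no external eigenvalue perturbation lemma) and in fact slightly more robust, since you never need to observe that $A$ itself is positive semidefinite; the paper's route, on the other hand, yields the stronger intermediate fact that all corresponding eigenvalue pairs satisfy $\lambda_i^A \geq \lambda_i^B$, which is more than the norm conclusion requires. Your closing remark about why positive semidefiniteness of $B$ is indispensable is also accurate: without it $\| B \|_2$ could be $|\lambda_{\min}(B)|$ and the conclusion fails (e.g.\ $B = -I$, $A = 0$).
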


\begin{proof}    
    (i) One can rewrite the spectral norm of a matrix $A\in \mathbb{R}^{n\times n}$ as
    \begin{align} \label{Eq: spectral norm}
        \| A \|_2 = \max_{x \neq 0} \frac{\| Ax\|_2}{\| x\|_2},
    \end{align}
    since for an orthonormal basis of eigenvectors $(x_1, \ldots, x_n)$ with respect to the eigenvalues $\lambda_1, \ldots, \lambda_n$ of $A^tA$ we have for $x \in \mathbb{R} ^n$ that 
    \begin{align*}
        \| Ax\|^2_2 &= \langle Ax, Ax \rangle = \langle A^tAx, x\rangle =  \langle \sum _{i=1}^n\langle x, x_i\rangle A^tAx_i, \sum _{i=1}^n\langle x, x_i\rangle x_i \rangle = \sum_{i=1}^n \lambda_i \langle x, x_i \rangle^2 \\ &\leq \lambda_{\max} \| x \|_2^2.
    \end{align*}
    Furthermore, for a normalized eigenvector $\tilde{x}$ of $\lambda_{\max}$ we see from the last transformation in the first row above, that $\| A\tilde{x} \|^2 _2= \lambda_{\max}$, which shows that \[\| A \| ^2_2 = \lambda_{\max} = \max_{x \neq 0 }\frac{\| Ax \|^2_2}{\| x \|^2_2} \text{ for }  x \in \mathbb{R}^n \]
    and therefore (\ref{Eq: spectral norm}) holds. This equation implies for $x \in \mathbb{R} ^n$ that
    \[\| A \|_2 \| x\|_2 \geq   \| Ax\|_2.\] This shows immediately that the spectral norm is compatible with the euclidean norm. \\
    (ii) We have to show that $\| AB \|_2 \leq \| A \| _2\| B \|_2$. Using (i) we have   
    \begin{align*}
     \| A B x\|_2 \leq  \| AB \|_2 \| x \|_2 \text{ for } x \in \mathbb{R}^n.  
    \end{align*}
    Furthermore, $\| A B x\|_2$ can be estimated in the following way:
      \begin{align*}
     \| A( B x)\|_2 \leq  \| A \|_2 \| Bx \| _2 \leq  \| A \|_2 \| B \|_2 \| x \|_2.
     \end{align*}
     Using (\ref{Eq: spectral norm}) one sees that there exists $\tilde{x}$ with $\| \tilde{x} \|_2 \neq 0$ such that 
     
  \begin{align*}
     \| A B  \|_2 \| \tilde{x}\|_2 = \| A B \tilde{x}\|_2  \leq   \| A \|_2 \| B \|_2 \| \tilde{x} \|_2.
     \end{align*} 
     Dividing by $\| \tilde{x} \|$ yields the desired result.

    (iii) 
    If $A \geq B$ than $A-B$ is positive semidefinite. Furthermore, we assume that $B$ is positive semidefinite. 
    We show below that these conditions imply, that $|\lambda_i^A| \leq |\lambda_i^B|$ for all eigenvalues $i \in \{1, \ldots, n\}$. We use the following deductions from the Theorem of Courant--Fischer (see \cite[Lemma 5.1(i)]{ReitznerRoemer} for a proof) for this:
    
    For two symmetric matrices $U, \, V \in \R^{n \times n}$ and $W= U+V\in \R^{n \times n}$ with corresponding eigenvalues 
$\lambda_1^U \leq \ldots\leq \lambda_n^U$, $\lambda_1^V \leq \ldots \leq \lambda_n^V$ and $\lambda_1^W \leq \ldots \leq \lambda_n^W$ holds, that
\begin{align} \label{Eq: Eigenwertschranken}
    \lambda_j^U + \lambda_1^V \leq \lambda_j^W \leq \lambda_j^U + \lambda_n^V \text{ for } j \in [n] .
\end{align} 
    
    This yields, since $A$ is the sum of the positive semidefinite matrices $B$ and $A-B$, the following property:
    \[\lambda_1^{A-B} + \lambda_i^B\leq \lambda_i^{A}. 
    \]
    This leads to the conclusion, that 
     \[0 \leq \ \lambda_1^{A-B} \leq \lambda_i^{A} - \lambda_i^B .
     \]
     and therefore $|\lambda_i^{A} | \geq |\lambda_i^B |$ for all $ i \in [n]$. Accordingly, we have shown that
     \[\| B \|_2 \leq \| A \|_2.\]
\end{proof}
 
We can now proceed to bound the eigenvalues of the matrices $A_{m}^{>1}$ from (\ref{Eq: Am<1}) using the spectral norm. 

\begin{thm} \label{Thm: bounds Am}
Let $A_{m}^{>1}$ for $m \in \{0, \ldots, k_n\}$ be the matrices according to (\ref{Eq: Am<1}), let $S_m$ be the bound from (\ref{Eq: bound S}) and let Requirement \ref{Asum 1} be satisfied. The eigenvalues of $A_{m}^{>1}$ are bounded above by 
   \[ S_m\sum_{i=1}^n \frac{1}{a_{i,m}^2},\]
   with $a_{i,m}$ from (\ref{Eq: ai crit}).
\end{thm}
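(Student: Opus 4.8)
The plan is to combine the Loewner-order inequality supplied by Requirement \ref{Asum 1} with the spectral-norm monotonicity established in Lemma \ref{lem: spectral norm}(iii). The starting observation, which makes everything fall into place, is that $E_m$ is the diagonal matrix whose diagonal entries are all equal to $S_m$, i.e.\ $E_m = S_m I_n$. Consequently the product $E_m D_m = S_m D_m$ is again symmetric, and since $A_m^{>1}$ is positive semidefinite by Requirement \ref{Asum 1}, both $E_m D_m$ and $A_m^{>1}$ are symmetric matrices to which Lemma \ref{lem: spectral norm}(iii) applies.

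First I would compute the spectral norm of $E_m D_m = S_m D_m$. By Lemma \ref{lem: matrix D}(i) the eigenvalues of $D_m$ are $0$ and $\sum_{i=m}^n 1/a_{i,m}^2$, and $D_m$ is positive semidefinite; hence $S_m D_m$ has largest eigenvalue $S_m \sum_{i=m}^n 1/a_{i,m}^2$, and being symmetric and positive semidefinite its spectral norm equals this value. Therefore
\[
\| E_m D_m \|_2 = S_m \sum_{i=m}^n \frac{1}{a_{i,m}^2} \leq S_m \sum_{i=1}^n \frac{1}{a_{i,m}^2}.
\]

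Next I would invoke Requirement \ref{Asum 1}, which gives $E_m D_m \geq A_m^{>1}$ in the Loewner order with $A_m^{>1}$ positive semidefinite. Applying Lemma \ref{lem: spectral norm}(iii) with $A = E_m D_m$ and $B = A_m^{>1}$ yields $\|A_m^{>1}\|_2 \leq \|E_m D_m\|_2$. Since $A_m^{>1}$ is symmetric and positive semidefinite, its spectral norm coincides with its largest eigenvalue, and all of its eigenvalues are nonnegative and bounded above by $\|A_m^{>1}\|_2$. Chaining these inequalities shows that every eigenvalue of $A_m^{>1}$ is at most $S_m \sum_{i=1}^n 1/a_{i,m}^2$, which is the asserted bound.

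The heavy lifting is already discharged by the preceding lemmas, so I do not expect a genuine obstacle; the proof is essentially a composition of Requirement \ref{Asum 1} with Lemma \ref{lem: spectral norm}(iii). The only points requiring care are bookkeeping: one must check that $E_m D_m$ and $A_m^{>1}$ are genuinely symmetric, so that the spectral norm agrees with the largest eigenvalue, and one must reconcile the fact that the nonzero block lives on indices $\{m,\ldots,n\}$ with the sum $\sum_{i=1}^n$ appearing in the statement. The latter is harmless, since the extra summands are nonnegative and so the stated bound is, if anything, weaker than the sharper value $S_m \sum_{i=m}^n 1/a_{i,m}^2$ produced directly by the eigenvalue of $D_m$.
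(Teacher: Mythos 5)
Your proposal is correct and follows essentially the same route as the paper: combine Requirement \ref{Asum 1} with the spectral-norm monotonicity of Lemma \ref{lem: spectral norm}(iii) and the eigenvalue computation of Lemma \ref{lem: matrix D} to bound $\|A_m^{>1}\|_2$ by $\|E_m D_m\|_2 = S_m \sum 1/a_{i,m}^2$. The only (harmless) difference is that you exploit $E_m = S_m I_n$ to evaluate $\|E_m D_m\|_2$ exactly, where the paper invokes submultiplicativity, $\|E_m D_m\|_2 \leq \|E_m\|_2\,\|D_m\|_2$; your added care about symmetry of $E_m D_m$ and about the index range $\{m,\ldots,n\}$ versus $\{1,\ldots,n\}$ tidies up points the paper glosses over.
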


\begin{proof} Let $\lambda_i ^{ A_{m}^{>1}}$ be the eigenvalues of $ A_{m}^{>1}$ and $E_m, D_m$ be the matrices from (\ref{Eq: DB}). Using the properties (ii) and (iii) from Lemma \ref{lem: spectral norm} for the spectral norm, as well as Lemma \ref{lem: matrix D} in the last step, shows that

\begin{align*}
    |\lambda_i^{ A_{m}^{>1}}| &\leq \| A_{m}^{>1} \|_2 \leq \| E_m\cdot D_m\|_2 \leq \| E_m\| _2 \| D_m \|_2 =  S_m\sum_{i=1}^n \frac{1}{a_{i,m}^2}.
\end{align*}
\end{proof}

Now we are able to give the following conjecture for $\Sigma_n $ in the critical regime.

\begin{conj} \label{conj: bounds eigenvalues crit reg}
Let $\Sigma_n$ be as in (\ref{Eq: Kov.matrix Laengen Potenz Funktional}) in the critical regime such that the matrices $A_{m}^{>1}$ for $m \in \{0, \ldots, k_n\}$ according to (\ref{Eq: Am<1}) fulfill Requirement \ref{Asum 1}. Moreover, let $S_m$ be the bound from (\ref{Eq: bound S}). Then, the eigenvalues $\lambda_1^ {\Sigma_n}\leq \ldots \leq \lambda_n^ {\Sigma_n}$ of $\Sigma_n$ are bounded by
\begin{align*}
  0 \leq  \lambda_1^ {\Sigma_n}\leq \ldots \leq \lambda_n^ {\Sigma_n} \leq  (k_n+1) \max_{m=0, \ldots, k_n}S_m(\sum_{i=1}^n \frac{1}{ a_{i,m}^2}).  
\end{align*}

\end{conj}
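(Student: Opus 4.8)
The plan is to treat the two sub-cases of the critical regime in (\ref{Eq: Sigma krit. Regime}) separately, since the genuine difficulty lies in only one of them. The lower bound is immediate: $\Sigma_n$ is a covariance matrix, hence symmetric and positive semidefinite (indeed positive definite except for finitely many $c$ by Theorem \ref{Thm: rank, det, definitness crit}), so all of its eigenvalues are nonnegative and $0 \leq \lambda_1^{\Sigma_n}$. Because $\Sigma_n$ is symmetric and positive semidefinite, its largest eigenvalue equals its spectral norm, $\lambda_n^{\Sigma_n} = \|\Sigma_n\|_2$, so it suffices to bound $\|\Sigma_n\|_2$ from above.

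For $c \in [1,\infty)$ I would use the representation $\Sigma_n = \sum_{m=0}^{k_n} A_m^{>1} c^{-m}$ from (\ref{Eq: Sigma krit. Regime}). The operator-norm characterization (\ref{Eq: spectral norm}), established in the proof of Lemma \ref{lem: spectral norm}, shows that $\|\cdot\|_2$ is subadditive, whence
\[
\|\Sigma_n\|_2 \leq \sum_{m=0}^{k_n} c^{-m}\,\|A_m^{>1}\|_2 \leq \sum_{m=0}^{k_n} \|A_m^{>1}\|_2 ,
\]
where the second step uses $c^{-m} \leq 1$ for $c \geq 1$. Under Requirement \ref{Asum 1} each $A_m^{>1}$ is positive semidefinite, so $\|A_m^{>1}\|_2 = \lambda_n^{A_m^{>1}}$, and Theorem \ref{Thm: bounds Am} yields $\|A_m^{>1}\|_2 \leq S_m \sum_{i=1}^n 1/a_{i,m}^2$, with $S_m$ as in (\ref{Eq: bound S}) and $a_{i,m}$ as in (\ref{Eq: ai crit}). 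The outer sum runs over the $k_n+1$ indices $m=0,\dots,k_n$, and bounding each summand by $\max_{m} S_m\bigl(\sum_{i=1}^n 1/a_{i,m}^2\bigr)$ gives the claimed upper bound $(k_n+1)\max_{m=0,\dots,k_n} S_m\bigl(\sum_{i=1}^n 1/a_{i,m}^2\bigr)$.

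The main obstacle, and the reason the statement is only a conjecture, is the complementary range $c \in (0,1)$, where (\ref{Eq: Sigma krit. Regime}) expresses $\Sigma_n = \sum_{m=0}^{2k_n} A_m^{<1} c^{m/2}$ in terms of the matrices $A_m^{<1}$ of (\ref{Eq: Am>1}). Repeating the argument above would require an analog of Theorem \ref{Thm: bounds Am} for the spectral norm of each $A_m^{<1}$, which is not available: the lower-right block structure of (\ref{Eq: Am Matrizen superkrit}), and hence the Loewner comparison with $E_m D_m$ from Requirement \ref{Asum 1}, is special to the $A_m^{>1}$. One might hope to transfer the bound across $c=1$ via the identity $B = \sum_m A_m^{<1} = \sum_m A_m^{>1}$ from (\ref{Eq: Matrix B}) together with continuity of $\Sigma_n(c)$, but the eigenvalues of $\Sigma_n(c)$ need not be monotone in $c$, so no such transfer is immediate. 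Producing spectral-norm bounds for the $A_m^{<1}$ directly is therefore the key missing step.
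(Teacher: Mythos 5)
Your treatment of the case $c \in [1,\infty)$ is correct and is essentially the paper's own argument: the paper bounds $\lambda_n^{\Sigma_n} \le \sum_{m=0}^{k_n}\lambda_n^{A_m^{>1}}c^{-m}$ via the Weyl-type inequality (\ref{Eq: Eigenwertschranken}) where you use subadditivity of $\|\cdot\|_2$, and both arguments then invoke Theorem \ref{Thm: bounds Am} together with $c^{-m}\le 1$. The genuine divergence is the range $c\in(0,1)$, which you leave open even though it is part of the statement. The paper does not stop there: it asserts, citing Lemma \ref{Lem: Identity}, that both cases can be handled with the matrices $A_m^{>1}$, weighted by $c^{m/2}$ resp.\ $c^{-m}$, and concludes identically since both weights are at most $1$. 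Your skepticism about exactly this step is well founded: Lemma \ref{Lem: Identity} is an identity of \emph{unweighted} sums, and the entrywise correspondence underlying its proof matches the $(l,j)$ entry of $A_{m'}^{<1}$ with that of $A_m^{>1}$ under the index shift $m'=k_l+k_j-2m$, which depends on $(l,j)$; hence $\sum_{m'}A_{m'}^{<1}c^{m'/2}$ is in general not equal to $\sum_m A_m^{>1}c^{m/2}$, and the paper's handling of $c\in(0,1]$ is not rigorous as written --- presumably the reason the statement is labelled a conjecture rather than a theorem.

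However, your conclusion that ``no such transfer is immediate'' and that spectral bounds for the $A_m^{<1}$ are the key missing ingredient is too pessimistic. The same index shift shows that for $c\in(0,1]$
\begin{align*}
\Sigma_n \;=\; D_c\Bigl(\sum_{m=0}^{k_n}A_m^{>1}c^{-m}\Bigr)D_c, \qquad D_c=\mathrm{diag}(c^{k_1/2},\ldots,c^{k_n/2}),
\end{align*}
since $(\Sigma_n)_{lj}=\sum_m (A_m^{>1})_{lj}\,c^{(k_l+k_j)/2-m}$. Because the nonzero block of $A_m^{>1}$ in (\ref{Eq: Am Matrizen superkrit}) involves only indices with $k_i\ge m$, each summand satisfies $c^{-m}D_cA_m^{>1}D_c=\tilde D_m A_m^{>1}\tilde D_m$, where $\tilde D_m$ is diagonal with entries $c^{(k_i-m)/2}\le 1$ on that block (and $1$ elsewhere); submultiplicativity from Lemma \ref{lem: spectral norm} then gives $\|c^{-m}D_cA_m^{>1}D_c\|_2\le\|A_m^{>1}\|_2$. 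Summing over the $k_n+1$ values of $m$ and applying Theorem \ref{Thm: bounds Am} (available under Requirement \ref{Asum 1}) yields exactly the conjectured bound for $c\in(0,1]$ as well. Adding this conjugation step to your proposal would close the gap in your proof and would in fact justify the critical-regime bound more solidly than the paper's own argument does.
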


\begin{proof}

Due to Lemma \ref{Lem: Identity} we can prove both cases of $c$ in the critical regime by using the matrices $A_m^{>1}$. We sum up these matrices for $m$ from $0$ to $k_n$ to obtain $B$ from Equation (\ref{Eq: Matrix B}) and depending on the case of $c$ we add the multiplicative constant $c^{m/2}$ or $c^{-m}$. \\
We denote by $\lambda_1^{A_m^{>1}} \leq \ldots \leq \lambda_n^{A_m^{>1}} $ the eigenvalues of the matrices $A_m^{>1}$ in arising order. These eigenvalues are themselves bounded above as given in Theorem \ref{Thm: bounds Am}. We are able to conclude the proof by using the right part of Equation (\ref{Eq: Eigenwertschranken}). Based on these preliminary considerations and the fact that $c^{m/2}$ and $c^{-m}$ both are bounded by 1 in the certain regime, the case $c \in (0,1]$ satisfies 
\begin{align*}
  0 \leq  \lambda_1^ {\Sigma_n}\leq \ldots \leq \lambda_n^ {\Sigma_n} \leq \sum_{m=0} ^{k_n} \lambda_n^{A_m^{>1}}c^{m/2} \leq  (k_n+1) \max_{m=0, \ldots, k_n}S_m(\sum_{i=1}^n \frac{1}{ a_{i,m}^2})  
\end{align*} and the case 
$c \in [1, \infty)$ satisfies as well
\begin{align*}
  0 \leq  \lambda_1^ {\Sigma_n}\leq \ldots \leq \lambda_n^ {\Sigma_n} \leq \sum_{m=0} ^{k_n} \lambda_n^{A_m^{>1}} c^{-m}\leq 
 (k_n+1) \max_{m=0, \ldots, k_n}S_m(\sum_{i=1}^n \frac{1}{ a_{i,m}^2}).  \enspace \enspace \enspace \enspace \qedhere
\end{align*}

\end{proof}

\section{Outlook} \label{Section: Outlook}
In this paper we studied a vector of volume power functionals regarding important algebraic invariants and aspects in all arising regimes. With our investigations we were, in particular, able to extend the results of the paper \cite{ReitznerRoemer} which address the special cases of length power functionals. Moreover, we supplemented algebraic results for volume power functionals in \cite{AkinwandeReitzner}. \\
Tables \ref{Tab: first results in all regimes} and \ref{Tab: decompositions in all regimes} in Section \ref{Kap: Main results} give an overview of our results and show, that there are still a lot of interesting open questions for future work. For example, there are certain aspects and regimes where one can work on more general solutions (we considered here only special cases) or more explicit statements (e.g., we found only bounds). \\
Additionally, in Section \ref{Sec: stoch. appl.} some interesting stochastic ideas and approaches are discussed in which way the algebraic results yield advantages for volume power vectors. Furthermore, we already started to discuss $h$-vectors in that section. We are planing to consider the $h$-vector in more details in the near future. This vector is, in particular, important in combinatorics (see, e.g., \cite{Bhaskara}) and its relation to volume power functionals was already explained in the introduction.

\end{document}